\newtheorem{theorem}{\noindent Theorem}[section]
\newtheorem{corollary}{\noindent Corollary}[section]
\newtheorem{definition}{\noindent Definition}[section]
\newtheorem{lemma}{\noindent Lemma}[section]
\newtheorem{proposition}{\noindent Proposition}[section]
\newtheorem{remark}{\noindent Remark}[section]
\numberwithin{equation}{section}
\numberwithin{figure}{section}
\newenvironment{proof}[1][Proof]{\noindent\textbf{#1.} }{\ \rule{0.5em}{0.5em}}
\begin{document}

\title{Homogenization of a thermo-poro-elastic medium with two-components
and interfacial hydraulic/thermal exchange barrier}
\author{Abdelhamid Ainouz \\
%EndAName
Dept. of Maths, Univ. Sci. Tech. H. Boumedienne\\
Po Box 32, El-Alia, Bab-Ezzouar, Algiers, Algeria}
\date{}
\maketitle

\begin{abstract}
The paper addresses the homogenization of a micro-model of poroelasticity
coupled with thermal effects for two-constituent media and with imperfect
interfacial contact.The homogenized model is obtained by means of the
two-scale convergence technique. It is shown that the
macro-thermo-poro-elasticity model with double porosity/diffusivity contains
in particular some extra terms accounting for the micro-heterogeneities and
imperfect contact at the local scale. Finally, a corrector result is given
under more regularity assumptions on the data.
\end{abstract}

\section{Introduction\label{s1}}

Materials like sedimentary rocks and living tissues are generally considered
as porous, compressible and elastic. Generally speaking, these porous
materials allows for transport mass of some substance through their open
channels such as liquid or gas. The presence of a fluid in such diffusive
porous materials affects its mechanical responses. Their elasticity
properties are then clearly highlighted by the compression resulting from
the fluid pressure since any release of fluid storage causes shrinkage of
the pore volume. This approach, called poroelasticity, accounts for the
coupling of the pore pressure field with the stresses in the skeleton. It is
first derived by K. Von Terzaghi \cite{terz} in the one-dimensional setting
and later generalized by M. Biot \cite{biot} to the three-dimensional case.
It combines the Hookean classic elasticity theory for the mechanical
response of the solid with the Darcy flow diffusion model for the fluid
transport within the pores. However, coupling thermal effects with
poromechanical processes is of great importance in real-world applications
such as geomechanics, civil engineering, biophysics. For instance, cold
water injection into a hot hydrocarbon reservoir causes changes in porosity
and permeability. These heat transfer processes yield soil deformations of
the geothermal well, see M. C. Su\'{a}rez-Arriaga \cite{s-a}. An other
example is given by the effects of the temperature on concrete (buildings,
bridges,...) which highly influence its strength development and its
durability. It is well known that high temperature causes cracking and
strength loss within the concrete mass, see for instance, W. Khaliq and V.
Kodur \cite{khk}, J. Shi \&\ al. \cite{sllh}, K. Y. Shin \&\ al. \cite{skkcj}%
, G. Weidenfeld \&\ al. \cite{wah}, W. Wang \&\ al \cite{wlll}. In
bio-engineering, there are many important applications of thermal diffusion
processes in biological tissues such as thermoregulation, thermotherapy,
radiotherapy, ... In some situations tissues are often assumed poroelastic
materials such as brain tissues \cite{cvv}, bone \cite{cow}, skin \cite%
{gagnc, xl}, living organs \cite{mm, pbp} and tumors \cite{aln, tcurr}.

The thermoporoelasticity theory aims to describe the thermal/fluid flows and
elastic behavior of heat conductive, porous and elastic media, see for
instance R.W. Zimmerman \cite{zim}, O. Coussy \cite{oc}, A. H. Cheng \cite%
{cheng}. A mathematical model for such conductive and porous materials is a
set of governing equations which consists of the momentum balance, the
conservation of mass and the energy balance equations. The purpose is to
predict deformations, fluid pressure and the temperature under different
internal, external forces and thermal sources. It is the classical Biot's
theory of consolidation processes coupled to thermal stresses.

Generally, mathematical models of flows in porous elastic media assume that
the domain consists of a system of single network where the pores have the
same size. However, in many natural situations such as aggregated soils or
fissured rocks, materials exhibit two (or more) dominant pore scales.
Mathematical modeling of multiple porosity media is the subject of great
research activity since the pioneering work of Barenblatt et al. \cite{bzk}%
. This concept has been applied to many areas of engineering, see for
instance Bai \& al. \cite{ber2, br3}, Berryman and Wang \cite{bw}, Cowin
\cite{cow}, Khalil and Selvadurai \cite{ks}, Straughan \cite{strau} and T.
D. Tran Ngoc \& al. \cite{tlb}, Wilson and Aifantis \cite{wa}. In this
context, it is assumed that there exist two porous structures: one is
related to macro-porosity connected to the pores of the material and the
other to micro-porosity connected to fissures of the skeleton. This causes
different pressure fields in the micropores and macropores. Furthermore, the
main temperature effect on any kind of media (solid, liquid or gas) is to
induce the phenomenon of thermal expansion that is an increase or a
shrinkage in volume. Furthermore, the temperature is also considered
different in each phase. The concept of thermoelasticity with at least two
(or multiple) temperatures was first initiated by Chen, Gurtin and Williams
see \cite{cgw} and further developed by many researchers see for instance
Masters \& al. \cite{mpl}, D. Ie\c{s}an \cite{san}, H.M. Youssef \cite{you}.

The goal of this paper is to derive rigorously, by means of the two-scale
convergence technique, a new fully coupled model of thermoporoelasticity
for biphasic media.\textsl{\ }In particular, the work contains some original
and essential advances in the study of homogenization problems applied to
poroelasticity. Notice that there are many works devoted to the
homogenization in poroelasticity and in thermoporoelasticity. We refer the
reader for instance to \cite{ain1, ain2, ain3, cbhd, eb, meir}. The outline
of the paper is divided into 4 main sections: Firstly, a micro-model of
poroelasticity coupled with thermal effects for two-constituent media is
given in section \ref{blet}. It is taken into account that contact between
these constituents is imperfect so that fluid and heat flows through the
interface is proportional to the jump of the pressure and temperature field,
respectively. Then in section \ref{vfmr}, the weak formulation of the
microscale problem and the main results are given. In section \ref{dhm}, the
homogenization of the micro-model is given with the help of the two-scale
convergence technique. The obtained macro-model for thermoporoelasticity
with double porosity/diffusivity is, at my knowledge, new in the
literature. It is mainly shown that micro-heterogeneities and imperfect
contact at the local scale lead to a Biot/Thermal matrices and zeroth order
term at the macroscale, giving rise to absorption/diffusion term. Finally,
in section \ref{cr}, a corrector result is given under more regularity
assumptions on the data.

\section{Derivation of the micro-model\label{blet}}

In this section we derive the set of thermo-hydro-mechanical equations for
poroelastic materials, for more details see for e.g. O. Coussy \cite{oc}.

\subsection{Linear thermoporoelasticity equations}

Let $\Omega $ be a bounded and smooth domain in $\mathbb{R}^{3}$ occupying
during the time interval $\left[ 0,T\right] $, $T>0$ a saturated and
poroelastic body which is subjected to a given body force per unit volume $%
\mathbf{f}_{0}$ [N=Kg.m$^{\text{-2}}$.s$^{\text{-2}}$], to a source/sink
term $g_{0}$ [Kg.m$^{\text{-3}}$.s$^{\text{-1}}$] and to a heat source $%
h_{0} $ per unit time $\ $[Kg.m$^{\text{-1}}$.s$^{\text{-3}}$]. Let \textbf{$%
u$} [m] denote its displacement, $p$ [Pa=Kg.m$^{\text{-1}}$.s$^{\text{-2}}$]
its pressure, $\theta $ [K] its temperature and $\sigma $ [Pa] its Cauchy
stress tensor. Throughout this paper the volumetric density $\rho $ is taken
for simplicity to be a positive constant [Kg.m$^{\text{-3}}$]. Now, we
introduce the Helmholtz free energy:
\begin{equation}
\mathcal{A}:=\mathcal{W}-S\theta  \label{e1}
\end{equation}%
where $\mathcal{W}$ [J.m$^{\text{-3}}$, \ J=Kg.m$^{\text{2}}$.s$^{\text{-2}}$%
] is the internal energy per unit of volume related to the strain work
density/porosity and $S$ [J.m$^{\text{-3}}$.K$^{\text{-1}}$]\ is the entropy
per unit of volume. The free energy $\mathcal{A}$ is the basic quantity to
define the material. In the theory of poromechanics and under the
infinitesimal transformation, we assume that the internal energy is a
function of the state quantities $e$ and $\phi $:
\begin{equation*}
\mathcal{W}=\mathcal{W}\left( e,\phi \right)
\end{equation*}%
where%
\begin{equation*}
e_{ij}\left( \mathbf{u}\right) =\frac{1}{2}\left( \frac{\partial u_{j}}{%
\partial x_{i}}+\frac{\partial u_{i}}{\partial x_{j}}\right) ,\ \ \mathbf{u}%
=\left( u_{i}\right) ,\ 1\leq i,j\leq 3
\end{equation*}%
is the (linearized) strain tensor [dimensionless] and $\phi $
[dimensionless] is the porosity. Assuming an isentropic process, that is $%
\mathrm{d}S=0$, we get from (\ref{e1}) that%
\begin{equation}
\mathrm{d}\mathcal{A}=\mathrm{d}\mathcal{W}-S\mathrm{d}\theta =\sigma
\mathrm{d}e+p\mathrm{d}\phi -S\mathrm{d}\theta \label{e2}
\end{equation}%
where
\begin{equation*}
\ \sigma _{ij}:=\frac{\partial \mathcal{W}}{\partial e_{ij}},\ p:=\frac{%
\partial \mathcal{W}}{\partial \phi }.
\end{equation*}%
Let us introduce the Legendre transform of the free energy $\mathcal{A}$:%
\begin{equation}
\eta :=\mathcal{A}-\phi p.  \label{e3}
\end{equation}%
It follows then from (\ref{e2}) and (\ref{e3}) that
\begin{equation}
\mathrm{d}\eta =\sigma \mathrm{d}e-\phi \mathrm{d}p-S\mathrm{d}\theta .
\label{e4}
\end{equation}%
Expanding the free energy $\eta $ in a Taylor series around a reference
state $\eta _{0}$ which corresponds to a free strain $e_{0}=0$, a reference
pressure $p_{0}$, a reference temperature $\theta _{0}$ and neglecting all
terms up to the second order we obtain%
\begin{equation}
\begin{array}{r}
\eta =\eta \left( e,p,\theta \right) =\eta _{0}+\left. \frac{\partial \eta }{%
\partial e_{ij}}\right\vert _{0}e_{ij}+\left. \frac{\partial \eta }{\partial
p}\right\vert _{0}\left( p-p_{0}\right) +\left. \frac{\partial \eta }{%
\partial \theta }\right\vert _{0}\left( \theta -\theta _{0}\right) + \\
\  \\
\left. \frac{\partial ^{2}\eta }{\partial e_{ij}\partial p}\right\vert
_{0}\left( p-p_{0}\right) e_{ij}+\left. \frac{\partial ^{2}\eta }{\partial
e_{ij}\partial \theta }\right\vert _{0}\left( \theta -\theta _{0}\right)
e_{ij}+\left. \frac{\partial ^{2}\eta }{\partial p\partial \theta }%
\right\vert _{0}\left( p-p_{0}\right) \left( \theta -\theta _{0}\right) \\
\  \\
+\frac{1}{2}\left. \frac{\partial ^{2}\eta }{\partial e_{ij}\partial e_{kh}}%
\right\vert _{0}e_{kh}e_{ij}+\frac{1}{2}\left. \frac{\partial ^{2}\eta }{%
\partial p^{2}}\right\vert _{0}\left( p-p_{0}\right) ^{2}+\frac{1}{2}\left.
\frac{\partial ^{2}\eta }{\partial \theta ^{2}}\right\vert _{0}\left( \theta
-\theta _{0}\right) ^{2}%
\end{array}
\label{e5}
\end{equation}%
where (and in the sequel) summation over repeated indices is used. Assuming
without no loss of generality that the energy function $\eta $ presents an
equilibrium point at $\eta _{0}=0$, that is
\begin{equation*}
\left. \frac{\partial \eta }{\partial e_{ij}}\right\vert _{0}=\left. \frac{%
\partial \eta }{\partial p}\right\vert _{0}=\left. \frac{\partial \eta }{%
\partial \theta }\right\vert _{0}=0,
\end{equation*}%
equation (\ref{e5}) reduces then to
\begin{equation*}
\begin{array}{r}
\eta =\frac{1}{2}e_{ij}\left. \frac{\partial ^{2}\eta }{\partial
e_{ij}\partial e_{kh}}\right\vert _{0}e_{kh}+e_{ij}\left. \frac{\partial
^{2}\eta }{\partial e_{ij}\partial p}\right\vert _{0}\left( p-p_{0}\right) +
\\
\\
e_{ij}\left. \frac{\partial ^{2}\eta }{\partial e_{ij}\partial \theta }%
\right\vert _{0}\left( \theta -\theta _{0}\right) +\left. \frac{\partial
^{2}\eta }{\partial p\partial \theta }\right\vert _{0}\left( p-p_{0}\right)
\left( \theta -\theta _{0}\right) + \\
\\
\frac{1}{2}\left. \frac{\partial ^{2}\eta }{\partial p^{2}}\right\vert
_{0}\left( p-p_{0}\right) ^{2}+\frac{1}{2}\left. \frac{\partial ^{2}\eta }{%
\partial \theta ^{2}}\right\vert _{0}\left( \theta -\theta _{0}\right) ^{2}%
\end{array}%
\end{equation*}%
which can be rewritten in a more simplified form:
\begin{equation}
\begin{array}{r}
\eta =\frac{1}{2}\left( e:\mathbf{A}e\right) -\left( B:e\right) \left(
p-p_{0}\right) -\left( D:e\right) \left( \theta -\theta _{0}\right) \\
\\
+\alpha \left( p-p_{0}\right) \left( \theta -\theta _{0}\right) -\frac{1}{2N}%
\left( p-p_{0}\right) ^{2}-\frac{\upsilon }{2\theta _{0}}\left( \theta
-\theta _{0}\right) ^{2}%
\end{array}
\label{e6}
\end{equation}%
where
\begin{eqnarray}
a_{ijkh} &=&\left. \frac{\partial ^{2}\eta }{\partial e_{ij}\partial e_{kh}}%
\right\vert _{0},\hspace{0.1cm}b_{ij}=\left. -\frac{\partial ^{2}\eta }{%
\partial e_{ij}\partial p}\right\vert _{0},\hspace{0.1cm}d_{ij}=\left. -%
\frac{\partial ^{2}\eta }{\partial e_{ij}\partial \theta }\right\vert _{0}
\label{e7} \\
\ \alpha &=&-\left. \frac{\partial ^{2}\eta }{\partial p\partial \theta }%
\right\vert _{0},\ N=\left. -\frac{\partial ^{2}\eta }{\partial p^{2}}%
\right\vert _{0}^{-1},\ \upsilon =-\theta _{0}\left. \frac{\partial ^{2}\eta
}{\partial \theta ^{2}}\right\vert _{0}  \label{e8}
\end{eqnarray}%
and
\begin{equation*}
E:F=e_{ij}f_{ij}=\sum_{i,j=1}^{3}e_{ij}f_{ij},\ E=\left( e_{ij}\right)
_{1\leq i,j\leq 3},\ F=\left( f_{ij}\right) _{1\leq i,j\leq 3}.
\end{equation*}%
In (\ref{e7}), $\mathbf{A}=\left( a_{ijkh}\right) $ [Kg.m$^{\text{-1}}$.s$^{%
\text{-2}}$] is the (fourth-rank) elasticity stiffness tensor, $B=\left(
b_{ij}\right) $ [dimensionless] the symmetric stress-pressure tensor,
expressing the change in porosity to the strain variation when pressure and
temperature are kept constant and $D$ is the thermal dilation (symmetric)
tensor related to the solid deformation by the following expression:
\begin{equation*}
D=\left( d_{ij}\right) ,\ \ d_{ij}=\left. -\frac{\partial ^{2}\eta }{%
\partial e_{ij}\partial e_{kh}}\frac{\partial e_{kh}}{\partial \theta }%
\right\vert _{0}=a_{ijkh}\gamma _{kh}
\end{equation*}%
where
\begin{equation*}
\gamma _{kh}=-\frac{\partial e_{kh}}{\partial \theta }\ \ \ \text{[K}^{\text{%
-1}}\text{]}
\end{equation*}%
In (\ref{e8}), $\alpha $ [K$^{\text{-1}}$] expresses the volumetric thermal
dilation coefficient with respect to the pore pressure. Furthermore $N$ [Kg.m%
$^{\text{-1}}$.s$^{\text{-2}}$] is the inverse of the compressibility
coefficient so it refers to a modulus relating the pressure $p$ linearly to
the porosity variation $\phi $ when the volumetric dilation is kept zero .
Finally $\upsilon $ [Kg.m$^{\text{-1}}$.s$^{\text{-2}}$.K$^{\text{-1}}$] is
the volumetric heat capacity. From (\ref{e4}) and (\ref{e6}) we deduce the
constitutive equations:%
\begin{eqnarray}
\sigma &=&\partial _{e}\eta =\mathbf{A}e-\left( p-p_{0}\right) B-\left(
\theta -\theta _{0}\right) D,  \label{e9} \\
\phi &=&-\partial _{p}\eta =B:e+\frac{1}{N}\left( p-p_{0}\right) +\alpha
\left( \theta -\theta _{0}\right) ,  \label{e10} \\
S &=&-\partial _{\theta }\eta =D:e+\alpha \left( p-p_{0}\right) +\frac{%
\upsilon }{\theta _{0}}\left( \theta -\theta _{0}\right) .  \label{e11}
\end{eqnarray}%
Equation (\ref{e9}) is well-known in the literature as the Duhamel--Neumann
relation.

In the framework of consolidation assumption, the inertia effects are
neglected, that is $\rho \partial _{tt}^{2}\mathbf{u}\simeq 0$. In this
case, the conservation of linear momentum equation reads in its differential
form as%
\begin{equation}
\mathrm{div}\sigma +\mathbf{f}_{0}=\mathbf{0}.  \label{e12}
\end{equation}%
Notice that
\begin{equation*}
\sigma _{ij}^{%
%TCIMACRO{\U{b0}}%
%BeginExpansion
{{}^\circ}%
%EndExpansion
}\left( \mathbf{u}\right) =\sigma _{ij}\left( \mathbf{u}\right) -\left(
b_{ij}\left( p-p_{0}\right) +d_{ij}\left( \theta -\theta _{0}\right) \right)
=a_{ijkh}e_{kh}\left( \mathbf{u}\right)
\end{equation*}%
is the effective stress tensor and equation (\ref{e12}) becomes then
\begin{equation}
-\mathrm{div}\left( \mathbf{A}e\left( \mathbf{u}\right) \right) +B\nabla
p+D\nabla \theta =\mathbf{f}_{0}.  \label{e13}
\end{equation}%
In the case of homogeneous and isotropic materials, the phenomenological
tensors $\mathbf{A}$, $B$ and $D$ take the simplified forms:%
\begin{eqnarray}
&&a_{ijkh}=\lambda \delta _{ij}\delta _{kh}+\mu \left( \delta _{ik}\delta
_{jh}+\delta _{ih}\delta _{jk}\right) ,\ b_{ij}=\beta \delta _{ij},
\label{e14} \\
&&d_{ij}=\left( 3\lambda +2\mu \right) \hat{\gamma}\delta _{ij},\ \gamma
_{kh}=\hat{\gamma}\delta _{kh}  \label{e15}
\end{eqnarray}%
where $\left( \delta _{ij}\right) $ is the Kr\"{o}necker symbol, $\beta $
the Biot-Willis coefficient \cite{bw1},\ $\hat{\gamma}$ [K$^{\text{-1}}$] is
well-known as the thermal expansion coefficient and $\lambda $ [Pa], $\mu $
[Pa] are the Lam\'{e}'s constants which are related to the Young's modulus $%
E $ [Pa] and the Poisson's ration $\nu $ (dimensionless) through the
expression:%
\begin{equation*}
\lambda =E\frac{\nu }{\left( 1-2\nu \right) \left( 1+\nu \right) },\ \mu =E%
\frac{\nu }{2\left( 1+\nu \right) }.
\end{equation*}%
The effective stress is then related to $e$ through the linear Hooke's
constitutive law:%
\begin{equation*}
\sigma _{ij}^{%
%TCIMACRO{\U{b0}}%
%BeginExpansion
{{}^\circ}%
%EndExpansion
}=2\mu e_{ij}+\lambda e_{kk}\delta _{ij}.\
\end{equation*}%
The Biot-Willis coefficient $\beta $ expresses, at constant fluid pressure,
the ratio of the volume of fluid squeezed out of a solid to total volume
change for elastic deformation.\ The coefficient $\hat{\gamma}$ refers to
the fractional change in volume per degree of temperature change. The
coefficient $2\lambda +3\mu $ is the bulk modulus which relates the
volumetric dilation $e_{ii}$ to $\sigma $ at a constant pore pressure.

If the porous solid is saturated by a compressible fluid whose mass density $%
\rho _{0}$ assumed to be constant, then the equation of continuity for the
flow fluid is given by
\begin{equation}
\partial _{t}\phi +\mathrm{div}\mathbf{v}=\frac{g_{0}}{\rho _{0}}
\label{e16}
\end{equation}%
where $\mathbf{v}$ [m. s$^{\text{-1}}$] is the velocity of the total amount
of the fluid content. For laminar flow, the fluid flux is related to the
fluid velocity $\mathbf{v}$ through the Darcy's law (neglecting the gravity
force $\mathbf{g}$):
\begin{equation}
\mathbf{v}=\frac{-k_{f}}{\mu _{d}}\nabla p  \label{e17}
\end{equation}%
where $k_{f}$ is the intrinsic permeability coefficient [m$^{\text{2}}$] and
$\mu _{d}$ [Pa.s] is the dynamic viscosity. The negative sign in the Darcy
law is needed because fluids flow from high pressure to low one, opposite to
the direction of the pressure gradient. Using (\ref{e10}), (\ref{e16}) and (%
\ref{e17}) we get%
\begin{equation}
\partial _{t}\left( \frac{1}{N}p+B:e\left( \mathbf{u}\right) +\alpha \theta
\right) -\mathrm{div}\left( \frac{k_{f}}{\mu _{d}}\nabla p\right) =\frac{%
g_{0}}{\rho _{0}}.  \label{e18}
\end{equation}

Similarly, the classical Fourier's law relates heat flux vector $\mathbf{q}$
[Kg.s$^{\text{-3}}$] with temperature gradient $\nabla \theta $ by the
equation%
\begin{equation*}
\mathbf{q}=-\lambda _{0}\nabla \theta
\end{equation*}%
where $\lambda _{0}$ [kg.m.s$^{\text{-3}}$.K$^{\text{-1}}$] is the thermal
conductivity. Using the first and second laws of thermodynamics \cite{fre},
the energy balance equation reads as follows:%
\begin{equation}
\partial _{t}\mathcal{W}=\sigma :e\left( \partial _{t}\mathbf{u}\right)
+p\partial _{t}\phi -\mathrm{div}\mathbf{q}+h_{0}  \label{e19}
\end{equation}%
where the left hand side represents the instantaneous change in storage
energy in the body, the first term of the right hand side is heat entering
the material and the last term stands for energy generation in the body.
From (\ref{e4}), differentiating $\eta $ with respect to $t$ and taking into
account the fact that $S=-\partial _{\theta }\eta $, it follows that%
\begin{equation}
\partial _{t}\eta =\sigma :e\left( \partial _{t}\mathbf{u}\right) -\phi
\partial _{t}p-S\partial _{t}\theta .  \label{e20}
\end{equation}%
But, using (\ref{e1}) and (\ref{e3}), we also have
\begin{equation}
\partial _{t}\eta =\partial _{t}\mathcal{W}-\phi \partial _{t}p-p\partial
_{t}\phi -S\partial _{t}\theta -\theta \partial _{t}S.  \label{e21}
\end{equation}%
Equating (\ref{e20}) and (\ref{e21}), equation (\ref{e19}) is rewritten as%
\begin{equation}
\theta \partial _{t}S=-\mathrm{div}\mathbf{q}+h_{0}.  \label{e22}
\end{equation}%
Assuming small variations of temperature so that one can replace $\theta $
by $\theta _{0}$, whenever required, and inserting (\ref{e11}) into (\ref%
{e22}) yield to the following equation:
\begin{equation}
\theta _{0}\partial _{t}\left\{ D:e\left( \mathbf{u}\right) +\alpha p+\frac{%
\upsilon }{\theta _{0}}\theta \right\} =\mathrm{div}\left( \lambda
_{0}\nabla \theta \right) +h_{0}.  \label{e23}
\end{equation}%
Equations (\ref{e13}), (\ref{e18}) and (\ref{e23}) are then the fundamental
equations of linear thermoporoelasticity. Denoting
\begin{eqnarray*}
\gamma &=&\left( 3\lambda +2\mu \right) \hat{\gamma},\ \phi _{0}=\frac{1}{N}%
,\ \kappa =\mu _{d}^{-1}k_{f},\  \\
g &=&\rho ^{-1}g_{0},\ \hat{\lambda}=\theta _{0}^{-1}\lambda _{0},\ c=\theta
_{0}^{-1}\upsilon ,\ h=\theta _{0}^{-1}h_{0}
\end{eqnarray*}%
and taking into account (\ref{e14}), (\ref{e15}), Equations (\ref{e13}), (%
\ref{e18}) and (\ref{e23}) are reduced to the following system:%
\begin{equation}
\left\{
\begin{array}{l}
-\left( \lambda +\mu \right) \nabla \left( \mathrm{div}\mathbf{u}\right)
-\mu \Delta \mathbf{u}+\beta \nabla p+\gamma \nabla \theta =\mathbf{f}_{0},
\\
\\
\partial _{t}\left( \phi _{0}p+\beta \mathrm{div}\mathbf{u}+\alpha \theta
\right) -\kappa \Delta p=g, \\
\\
\partial _{t}\left( c\theta +\gamma \mathrm{div}\mathbf{u}+\alpha p\right) -%
\hat{\lambda}\Delta \theta =h.%
\end{array}%
\right.  \label{e24}
\end{equation}

\begin{remark}
Note that if we let $\gamma $ and $\alpha $ to be negligible then the system
\ (\ref{e24}) decouples to the classical Biot system \cite{biot}:
\begin{equation*}
\left\{
\begin{array}{l}
-\mathrm{div}\sigma =-\left( \lambda +\mu \right) \nabla \mathrm{div}\left(
\mathbf{u}\right) -\mu \Delta \mathbf{u}+\beta \nabla p=\mathbf{f}_{0}, \\
\  \\
\partial _{t}\left( \phi p+\beta \mathrm{div}\mathbf{u}\right) -\kappa
\Delta p_{m}=g,%
\end{array}%
\right.
\end{equation*}%
together with the single heat equation
\begin{equation*}
c\partial _{t}\theta -\hat{\lambda}\Delta \theta =h.
\end{equation*}%
On the other hand, neglecting $\beta $ and $\alpha $, the system (\ref{e24})
decouples to the thermoelasticity system:%
\begin{equation*}
\left\{
\begin{array}{l}
-\mathrm{div}\sigma =-\left( \lambda +\mu \right) \nabla \mathrm{div}\left(
\mathbf{u}\right) -\mu \Delta \mathbf{u}+\gamma \nabla \theta =\mathbf{f}_{0},
\\
\ \  \\
\partial _{t}\left( c\theta +\gamma \mathrm{div}\mathbf{u}\right) -\hat{%
\lambda}\Delta \theta =h%
\end{array}%
\right.
\end{equation*}%
with the single pressure diffusion equation
\begin{equation*}
\phi \partial _{t}p-\kappa \Delta p=g.
\end{equation*}
\end{remark}

\subsection{A mathematical model for two-components materials}

In this subsection we shall derive from (\ref{e24}) the mathematical model
of thermoporoelasticity for isotropic materials made of two constituents,
namely matrix and inclusions and which are in imperfect interfacial contact.
The geometrical setting is described as follows: Let $\Omega $ be a bounded
and a smooth domain of $\mathbb{R}^{3}$. We assume that $\Omega $ is divided
into two open sets $\Omega _{1}$ and $\Omega _{2}$ such that $\Omega =\Omega
_{1}\cup \overline{\Omega }_{2}$ and $\partial \Omega _{2}\cap \partial
\Omega =\emptyset $. The medium occupying $\Omega $ is composed of two
poro-elastic solids $\Omega _{1}$ and $\Omega _{2}$ separated by the
interface $\Sigma :=\partial \Omega _{2}$. According to (\ref{e24}), the
local description is given by the following system: for each phase $m=1,2$
corresponding to the material $\Omega _{m}$
\begin{equation}
\left\{
\begin{array}{l}
-\mathrm{div}\sigma _{m}=\mathbf{f}_{m}, \\
\  \\
\sigma _{m}=-\left( \lambda _{m}+\mu _{m}\right) \nabla \mathrm{div}\left(
\mathbf{u}_{m}\right) -\mu _{m}\Delta \mathbf{u}_{m}+\beta _{m}\nabla
p_{m}+\gamma _{m}\nabla \theta _{m}, \\
\  \\
\partial _{t}\left( \phi _{m}p_{m}+\beta _{m}\mathrm{div}\mathbf{u}%
_{m}+\alpha _{m}\theta _{m}\right) -\kappa _{m}\Delta p_{m}=g_{m}, \\
\  \\
\partial _{t}\left( c_{m}\theta _{m}+\gamma _{m}\mathrm{div}\mathbf{u}%
_{m}+\alpha _{m}p_{m}\right) -\hat{\lambda}_{m}\Delta \theta _{m}=h_{m}.%
\end{array}%
\right.  \label{e25}
\end{equation}%
For piecewise homogeneous media, the system (\ref{e25}) is complemented by
interface, boundary and initial conditions. They read as follows: on the
interface $\Sigma $
\begin{subequations}
\label{e26}
\begin{align}
& \mathbf{u}_{1}=\mathbf{u}_{2},  \label{e26a} \\
& \mathcal{\sigma }_{1}\cdot \mathbf{n}=\mathcal{\sigma }_{2}\cdot \mathbf{n}%
,  \label{e26b} \\
& \kappa _{1}\nabla p_{1}\cdot \mathbf{n}=\kappa _{2}\nabla p_{2}\cdot
\mathbf{n},  \label{e26c} \\
& \hat{\lambda}_{1}\nabla \theta _{1}\cdot \mathbf{n}=\hat{\lambda}%
_{2}\nabla \theta _{2}\cdot \mathbf{n},  \label{e26d} \\
& \kappa _{1}\nabla p_{1}\cdot \mathbf{n}=-\varsigma \left(
p_{1}-p_{2}\right) ,  \label{e26e} \\
& \hat{\lambda}_{1}\nabla \theta _{1}\cdot \mathbf{n}=-\omega \left( \theta
_{1}-\theta _{2}\right)  \label{e26f}
\end{align}%
where $\mathbf{n}$ is the unit normal vector on $\Sigma $ pointing outwards
to $\Omega _{2}$. In (\ref{e26e}) $\varsigma $ [kg$^{\text{-1}}$.m$^{\text{2}%
}$.s] is the interfacial hydraulic permeability and in (\ref{e26f}) $\omega $
[kg.s$^{\text{-3}}$.K$^{\text{-1}}$] is the interface thermal conductance.
Conditions (\ref{e26a})-(\ref{e26d}) are the continuity of the
displacements, of the normal stresses and of the normal fluxes (hydraulic
and thermal). It is assumed that the hydraulic/thermal contact between these
two materials is imperfect, so that the fluxes are related to the jump of
pressures and temperatures, see (\ref{e26e}) and (\ref{e26f}). For example
the case $\varsigma =\infty $ corresponds to a perfect hydraulic contact, so
that $p_{1}=p_{2}$ across the interface and the pressure is continuous. If $%
\varsigma =0$ there is no hydraulic contact across the interface, yielding
no motion of the fluid relative to the solid, that is, $\kappa _{1}\nabla
p_{1}\cdot \mathbf{n}=\kappa _{2}\nabla p_{2}\cdot \mathbf{n}=0$. In the
literature (\ref{e26e}) is known as the Deresiewicz-Skalak condition \cite%
{ds} and (\ref{e26f}) as the Newton's cooling law \cite{cj}. On the exterior
boundary $\Gamma $ we assume the following homogeneous Dirichlet boundary
conditions:
\end{subequations}
\begin{equation}
\mathbf{u}_{1}=0\text{, }p_{1}=0,\ \theta _{1}=0.  \label{e27}
\end{equation}%
Finally the initial conditions are given as follows:%
\begin{equation}
\mathbf{u}_{m}\left( 0,x\right) =0\text{, }p_{m}\left( 0,x\right) =0,\
\theta _{m}\left( 0,x\right) =0,\ x\in \Omega _{m}.  \label{e28}
\end{equation}%
In summary, the system (\ref{e25})-(\ref{e28}) is the complete set of
equations for thermoporoelastic media with two-components.\

In this work we shall study a model consisting of "very" small inclusions
embedded in a matrix, so we have to introduce a small and dimensionless
parameter expressing the ratio between the local scale of the inclusions and
the macroscopic scale of the matrix. This will be done in the next
subsection.

\subsection{Scaling\label{sec1}}

We consider a poroelastic composite of dimension $\mathcal{O}\left(
L^{3}\right) $ where $L$ [m] is the characteristic length of the medium at
the macroscopic scale. We assume that the composite has a periodic structure
with period $Y$ with dimension $\mathcal{O}\left( \ell ^{3}\right) $ where $%
\ell $ [m] is the microscopic characteristic length. The fundamental
assumption in the periodic homogenization theory \cite{blp, san1} is that
these scales are separated which in this case can be read as follows:%
\begin{equation*}
\varepsilon :=\frac{\ell }{L}\ll 1.
\end{equation*}%
We assume that the stiffness tensors $\mathbf{A}_{1}$ and $\mathbf{A}_{2}$,
permeabilities $\kappa _{1}$ and $\kappa _{2}$, thermal conductivities $\hat{%
\lambda}_{1}$ and $\hat{\lambda}_{2}$ are of the same order of magnitude.
Pressures $p_{1}$, $p_{2}$ and temperatures $\theta _{1}$, $\theta _{2}$ are
also considered of the same order. More precisely, we assume that
\begin{eqnarray*}
\left\vert \mathbf{A}_{1}\right\vert &=&\left\vert \mathbf{A}_{2}\right\vert
=\mathcal{O}\left( \varepsilon ^{0}\right) ,\ \left\vert \kappa
_{1}\right\vert =\left\vert \kappa _{2}\right\vert =\mathcal{O}\left(
\varepsilon ^{0}\right) ,\ \left\vert \hat{\lambda}_{1}\right\vert
=\left\vert \hat{\lambda}_{2}\right\vert =\mathcal{O}\left( \varepsilon
^{0}\right) , \\
p_{1} &=&p_{2}=\mathcal{O}\left( \varepsilon ^{0}\right) ,\ \theta
_{1}=\theta _{2}=\mathcal{O}\left( \varepsilon ^{0}\right) .
\end{eqnarray*}%
Equations (\ref{e26e}) and (\ref{e26f}) give at the microscale two
dimensionless (Biot) numbers:%
\begin{equation*}
Bi_{1}=\frac{\left\vert \varsigma \left( p_{1}-p_{2}\right) \right\vert }{%
\left\vert \kappa _{1}\nabla p_{1}\right\vert }=\frac{\varsigma l}{\kappa
_{1}},\quad Bi_{2}=\frac{\left\vert \omega \left( \theta _{1}-\theta
_{2}\right) \right\vert }{\left\vert \hat{\lambda}_{1}\nabla \theta
_{1}\right\vert }=\frac{\omega l}{\hat{\lambda}_{1}}.
\end{equation*}%
Since the area of $\Sigma ^{\varepsilon }\mathcal{\ }$is of order $%
\varepsilon ^{-1}$, a convenient scaling of those Biot numbers is ${\large B}%
i_{1}={\large B}i_{2}=\mathcal{O}\left( \varepsilon \right) $, see \cite%
{adhs, neus}.

\begin{remark}
\label{rem1}Note that other assumptions could be studied as well. One also
could consider the following case: $\left\vert \mathbf{A}_{1}\right\vert
=\left\vert \mathbf{A}_{2}\right\vert =\left\vert \kappa _{1}\right\vert
=\left\vert \hat{\lambda}_{1}\right\vert =\mathcal{O}\left( \varepsilon
^{0}\right) $,\ $p_{1}=p_{2}=\ \theta _{1}=\theta _{2}=\mathcal{O}\left(
\varepsilon ^{0}\right) $ and $\ \left\vert \kappa _{2}\right\vert
=\left\vert \hat{\lambda}_{2}\right\vert =\mathcal{O}\left( \varepsilon
^{2}\right) $.\ See for instance \cite{ain2, ain3} and for more general
situations we refer the reader to \cite{pb} .
\end{remark}

\subsection{Problem statement}

In this subsection a micro-model for a thermoporoelastic medium with
two-components and with interfacial hydraulic/thermal exchange barrier is
presented. As before, we consider $\Omega $\ a bounded domain in $\mathbb{R}%
^{3}\ $with a smooth boundary $\Gamma $. The region $\Omega $ represents a
part of a medium made of two constituents: the matrix and the inclusions,
separated by a thin and periodic layer so that the hydraulic/thermal flux
are proportional to the jump of the pressure/temperature field. To describe
the periodicity of the medium, we consider $Y:=]0,1[^{3}$ as the generic
cell of periodicity divided as $Y:=Y_{1}\cup Y_{2}\cup \Sigma $ where $%
Y_{1}, $\ $Y_{2}$\ are two connected, open and disjoint subsets of $Y$ and $%
\Sigma :=\partial Y_{1}\cap \partial Y_{2}$\ is a smooth surface that
separates them. We assume that $\bar{Y}_{2}\subset Y$. Let $\chi _{m}$
denote the $Y$-periodic characteristic function of $Y_{m}$ ($m=1,2$). Let $%
\varepsilon >0$ be a sufficiently small parameter and set%
\begin{equation*}
\Omega _{m}^{\varepsilon }:=\{x\in \Omega :\chi _{m}(\frac{x}{\varepsilon }%
)=1\},\ \ \ \ \Sigma ^{\varepsilon }:=\overline{\Omega _{1}^{\varepsilon }}%
\cap \overline{\Omega _{2}^{\varepsilon }}.
\end{equation*}%
We assume that $\bar{\Omega}_{2}^{\varepsilon }\subset \Omega $. The
space-time regions are denoted by%
\begin{eqnarray*}
&&Q:=\left( 0,T\right) \times \Omega ,\ \ \Gamma _{T}:=\left( 0,T\right)
\times \Gamma ,\ \Sigma _{T}:=\left( 0,T\right) \times \Sigma ,\  \\
&&\ Q_{m}^{\varepsilon }:=\ \left( 0,T\right) \times \Omega
_{m}^{\varepsilon },\ \ \Sigma _{T}^{\varepsilon }:=\left( 0,T\right) \times
\Sigma ^{\varepsilon }.
\end{eqnarray*}%
In view of (\ref{e25}), the thermoporoelasticity system is given in each
phase $Q_{m}^{\varepsilon }$ by%
\begin{equation}
-\mathrm{div}\sigma _{m}^{\varepsilon }=\mathbf{f}_{m},\mathbf{\ \ \ }\sigma
_{m}^{\varepsilon }=\left( \sigma _{m,ij}\right) _{ij},  \label{e29}
\end{equation}%
\begin{equation}
\partial _{t}\left( \phi _{m}p_{m}^{\varepsilon }+\beta _{m}\mathrm{div}%
\mathbf{u}_{m}^{\varepsilon }+\alpha _{m}\theta _{m}^{\varepsilon }\right)
-\kappa _{m}\Delta p_{m}^{\varepsilon }=g_{m},  \label{e30}
\end{equation}%
\begin{equation}
\partial _{t}\left( c_{m}\theta _{m}^{\varepsilon }+\gamma _{m}\mathrm{div}%
\mathbf{u}_{m}^{\varepsilon }+\alpha _{m}p_{m}^{\varepsilon }\right) -\hat{%
\lambda}_{m}\Delta \theta _{m}^{\varepsilon }=h_{m}  \label{e31}
\end{equation}%
where \
\begin{equation}
\sigma _{m,ij}^{\varepsilon }\left( \mathbf{u}^{\varepsilon }\right)
=a_{m,ijkl}e_{kl}\left( \mathbf{u}_{m}^{\varepsilon }\right) -\left( \beta
_{m}p_{m}^{\varepsilon }+\gamma _{m}\theta _{m}^{\varepsilon }\right) \delta
_{ij}  \label{art7-53}
\end{equation}%
with$\mathbf{\ }\left( a_{m,ijkl}\right) _{1\leq i,j,k,l\leq 3}$ the
elasticity tensor stiffness satisfying the Hooke's law for isotropic
materials:%
\begin{equation*}
a_{m,ijkl}=\lambda _{m}\delta _{ij}\delta _{kl}+\mu _{m}\left( \delta
_{ik}\delta _{jl}+\delta _{il}\delta _{jk}\right) ,\ m=1,2,\ 1\leq
i,j,k,l\leq 3.
\end{equation*}%
As in (\ref{e26a})-(\ref{e26f}), the transmission conditions on the
interface $\Sigma _{T}^{\varepsilon }$ are as follows:
\begin{subequations}
\label{e32}
\begin{align}
& \mathbf{u}_{1}^{\varepsilon }=\mathbf{u}_{2}^{\varepsilon },  \label{e32a}
\\
& \sigma _{1}^{\varepsilon }\cdot \mathbf{n}^{\varepsilon }=\sigma
_{2}^{\varepsilon }\cdot \mathbf{n}^{\varepsilon },  \label{e32b} \\
& \kappa _{1}\nabla p_{1}^{\varepsilon }\cdot \mathbf{n}^{\varepsilon
}=\kappa _{2}\nabla p_{2}^{\varepsilon }\cdot \mathbf{n}^{\varepsilon },
\label{e32c} \\
& \hat{\lambda}_{1}\nabla \theta _{1}^{\varepsilon }\cdot \mathbf{n}%
^{\varepsilon }=\hat{\lambda}_{2}\nabla \theta _{2}^{\varepsilon }\cdot
\mathbf{n}^{\varepsilon },  \label{e32d} \\
& \kappa _{1}\nabla p_{1}^{\varepsilon }\cdot \mathbf{n}^{\varepsilon
}=-\varsigma ^{\varepsilon }\left( p_{1}^{\varepsilon }-p_{2}^{\varepsilon
}\right) ,  \label{e32e} \\
& \hat{\lambda}_{1}\nabla \theta _{1}^{\varepsilon }\cdot \mathbf{n}%
^{\varepsilon }=-\omega ^{\varepsilon }\left( \theta _{1}^{\varepsilon
}-\theta _{2}^{\varepsilon }\right)  \label{e32f}
\end{align}%
where $\mathbf{n}^{\varepsilon }$ is the unit normal of $\Sigma
^{\varepsilon }$ pointing outwards of $\Omega _{2}^{\varepsilon }$.
Furthermore, boundary conditions (\ref{e27}) on $\Gamma _{T}$ read as:
\end{subequations}
\begin{equation}
\mathbf{u}_{1}^{\varepsilon }=0,\ \theta _{1}^{\varepsilon
}=p_{1}^{\varepsilon }=0.  \label{e33}
\end{equation}%
Observe that no boundary conditions on $\Gamma $ are required for the phase
2 since the inclusions $\bar{\Omega}_{2}^{\varepsilon }$ are strictly
embedded in $\Omega $. Finally, the initial conditions (\ref{e28}) give
\begin{equation}
\mathbf{u}_{m}^{\varepsilon }\left( x,0\right) =0,\ \theta _{m}^{\varepsilon
}\left( x,0\right) =p_{m}^{\varepsilon }\left( x,0\right) =0\text{ in }%
\Omega _{m}^{\varepsilon }.  \label{e34}
\end{equation}%
We assume that the elastic modulii $a_{m,ijkl}$, the Biot-Willis parameter $%
\beta _{m}$, the thermal dilation coefficients $\gamma _{m}$ and $\alpha
_{m} $, the compressibility $\phi _{m}$, the diffusivity coefficients $%
\kappa _{m},\hat{\lambda}_{m}$ and the\ heat capacity $c_{m}$ are positive
constants. We also assume that the body force $\mathbf{f}_{m}$ is in $%
L^{2}\left( \Omega \right) ^{3}$, the sink source $g_{m}$ and the heat
source $h_{m}$ are in $L^{2}\left( \Omega \right) $. Furthermore, the
interface hydraulic permeability and the interface thermal conductance are
such that $\varsigma ^{\varepsilon }\left( x\right) =\varepsilon \varsigma
\left( \frac{x}{\varepsilon }\right) $ and $\omega ^{\varepsilon }\left(
x\right) =\varepsilon \omega \left( \frac{x}{\varepsilon }\right) $ (see
Sec. \ref{sec1}) where $\varsigma $ and $\omega $ are continuous on $\mathbb{%
R}^{3}$, $Y-$periodic and bounded from below: $\exists C>0$ such that for
all $y\in \mathbb{R}^{3}$
\begin{equation*}
\varsigma \left( y\right) \geq C,\text{\hspace{0.5cm}}\omega \left( y\right)
\geq C\text{.}
\end{equation*}%
In what follows, $C$ will denote a positive constant independent of $%
\varepsilon $.

\section{Statement of the main results\label{vfmr}}

We first introduce some notations: if $E$\ is a Banach space then for $%
p=2,\infty $, $L_{T}^{p}(E)$ denotes the Bochner space $L^{p}(0,T;E)$ of
(class of ) functions $u:t\longmapsto u\left( t\right) $\ defined a.e. on $%
\left( 0,T\right) $\ with values in $E$\ such that $\left\Vert u\left(
t\right) \right\Vert _{L_{T}^{p}(E)}^{p}:=\int_{0}^{T}\left\Vert u\left(
t\right) \right\Vert _{E}^{p}{}\mathrm{d}t$\ is finite (${}\mathrm{d}t$\
denotes the Lebesgue measure on the interval $\left( 0,T\right) $). Let $%
L_{\#}^{2}(Y)$ (resp. $L_{\#}^{2}(Y_{m})$) be the space of (class of)
functions belonging to $L_{\mathrm{loc}}^{2}(\mathbb{R}^{3})$ (resp. $L_{%
\mathrm{loc}}^{2}(Z_{m})$) which are $Y$-periodic, where $Z_{m}=\cup _{\vec{k%
}\in \mathbb{Z}^{3}}\left( Y_{m}+\vec{k}\right) $.$\ $Let $H_{\#}^{1}(Y)$
(resp. $H_{\#}^{1}(Y_{m})$) to be the space of those functions together with
their derivatives belonging to $L_{\#}^{2}(Y)$ (resp. $L_{\#}^{2}(Y_{m})$)
having the same trace on the opposite faces of $\partial Y$ (resp. $\partial
Y_{m}\cap \partial Y$). Let
\begin{equation*}
\left.
\begin{array}{l}
\mathbf{V}\mathbf{:}=H_{0}^{1}\left( \Omega \right) ^{3},\quad
H^{\varepsilon }:=L^{2}\left( \Omega _{1}^{\varepsilon }\right) \times
L^{2}\left( \Omega _{2}^{\varepsilon }\right) , \\
\  \\
H_{\Gamma }^{1}\left( \Omega _{1}^{\varepsilon }\right) :=\left\{ q\in
H^{1}\left( \Omega _{1}^{\varepsilon }\right) :q_{|\Gamma }=0\right\} , \\
\  \\
V^{\varepsilon }:=V_{1}^{\varepsilon }\times V_{2}^{\varepsilon }=H_{\Gamma
}^{1}\left( \Omega _{1}^{\varepsilon }\right) \times H^{1}\left( \Omega
_{2}^{\varepsilon }\right) .%
\end{array}%
\right.
\end{equation*}%
The space $V^{\varepsilon }$ is equipped with the inner product:
\begin{equation*}
\begin{array}{l}
\left( q,\psi \right) _{V^{\varepsilon }}:=\int_{\Omega _{1}^{\varepsilon
}}\nabla q_{1}\nabla \psi _{1}\hspace{0.03cm}\mathrm{d}x+\int_{\Omega
_{2}^{\varepsilon }}\nabla q_{2}\nabla \psi _{2}\hspace{0.03cm}\mathrm{d}x+
\\
\\
\varepsilon \int_{\Sigma ^{\varepsilon }}\left( q_{1}-q_{2}\right) \left(
\psi _{1}-\psi _{2}\right) \hspace{0.03cm}\mathrm{d}s^{\varepsilon }\left(
x\right) ,\ \ q=\left( q_{1},q_{2}\right) ,\ \psi =\left( \psi _{1},\psi
_{2}\right) \in V^{\varepsilon }%
\end{array}%
\end{equation*}%
where $\mathrm{d}x$ and $\mathrm{d}s^{\varepsilon }\left( x\right) $ stands
for the Lebesgue measure in $\mathbb{R}^{3}$ and the surfacic measure on $%
\Sigma ^{\varepsilon }$, respectively. Let us denote for a.e. $t\in \left(
0,T\right) $%
\begin{eqnarray*}
\mathbf{u}^{\varepsilon }(t,x) &=&\left\{
\begin{array}{c}
\mathbf{u}_{1}^{\varepsilon }(t,x),\ \ x\in \Omega _{1}^{\varepsilon }, \\
\  \\
\mathbf{u}_{2}^{\varepsilon }(t,x),\ \ x\in \Omega _{2}^{\varepsilon },%
\end{array}%
\right. \\
&&\  \\
p^{\varepsilon }(t,x) &=&\left( p_{1}^{\varepsilon }(t,x),p_{2}^{\varepsilon
}(t,x)\right) ,\ \theta ^{\varepsilon }(t,x)=\left( \theta _{1}^{\varepsilon
}(t,x),\theta _{2}^{\varepsilon }(t,x)\right)
\end{eqnarray*}%
and let us define
\begin{equation}
\ \left.
\begin{array}{l}
\mathbf{A}^{\varepsilon }(x):=\chi _{1}(\frac{x}{\varepsilon })\mathbf{A}%
_{1}+\chi _{2}(\frac{x}{\varepsilon })\mathbf{A}_{2}, \\
\  \\
\mathbf{f}^{\varepsilon }(x):=\chi _{1}(\frac{x}{\varepsilon })\mathbf{f}%
_{1}\left( x\right) +\chi _{2}(\frac{x}{\varepsilon })\mathbf{f}_{2}\left(
x\right) , \\
\  \\
g^{\varepsilon }(x):=\chi _{1}(\frac{x}{\varepsilon })g_{1}\left( x\right)
+\chi _{2}(\frac{x}{\varepsilon })g_{2}\left( x\right) , \\
\  \\
h^{\varepsilon }(x):=\chi _{1}(\frac{x}{\varepsilon })h_{1}\left( x\right)
+\chi _{2}(\frac{x}{\varepsilon })h_{2}\left( x\right)%
\end{array}%
\right.  \label{e35}
\end{equation}%
where $\mathbf{A}_{m}:=\left( a_{m,ijkl}\right) _{1\leq i,j,k,l\leq 3}$. Now
we are in position to give the weak formulation.

\begin{definition}
A weak solution of the micro-model (\ref{e29})-(\ref{e34}) is a triple $%
\left( \mathbf{u}^{\varepsilon },p^{\varepsilon },\theta ^{\varepsilon
}\right) \in L_{T}^{\infty }(\mathbf{V})\times L_{T}^{2}(V^{\varepsilon
})^{2}$ such that $p^{\varepsilon },\theta ^{\varepsilon }\in L_{T}^{\infty
}(H^{\varepsilon })$ and for $m=1,2$
\begin{eqnarray*}
\partial _{t}\left( \phi _{m}p_{m}^{\varepsilon }+\beta _{m}\mathrm{div}%
\mathbf{u}_{m}^{\varepsilon }+\alpha _{m}\theta _{m}^{\varepsilon }\right)
&\in &L_{T}^{2}(V_{m}^{\varepsilon }{}^{\ast }), \\
\partial _{t}\left( c_{m}\theta _{m}^{\varepsilon }+\gamma _{m}\mathrm{div}%
\mathbf{u}_{m}^{\varepsilon }+\alpha _{m}p_{m}^{\varepsilon }\right) &\in
&L_{T}^{2}(V_{m}^{\varepsilon }{}^{\ast })
\end{eqnarray*}%
and for all $\mathbf{v}\in \mathbf{V}$, $(q_{1},q_{2})\in V^{\varepsilon }$,
we have the three following coupled systems: for$\ $a.e. $t\in \left(
0,T\right) ,$
\begin{equation}
\left\{
\begin{array}{l}
\int_{\Omega }\mathbf{A}^{\varepsilon }e(\mathbf{u}^{\varepsilon })e(\mathbf{%
v})\hspace{0.03cm}\mathrm{d}x+\int_{\Omega _{1}^{\varepsilon }}\left( \beta
_{1}\nabla p_{1}^{\varepsilon }+\gamma _{1}\nabla \theta _{1}^{\varepsilon
}\right) \mathbf{v}\hspace{0.03cm}\mathrm{d}x \\
\\
+\int_{\Omega _{2}^{\varepsilon }}\left( \beta _{2}\nabla p_{2}^{\varepsilon
}+\gamma _{2}\nabla \theta _{2}^{\varepsilon }\right) \mathbf{v}\hspace{%
0.03cm}\mathrm{d}x=\int_{\Omega }\mathbf{f}^{\varepsilon }\mathbf{v}\hspace{%
0.03cm}\mathrm{d}x,%
\end{array}%
\right.  \label{e36}
\end{equation}%
\begin{equation}
\left\{
\begin{array}{l}
\langle \partial _{t}(\phi _{1}p_{1}^{\varepsilon }+\beta _{1}\mathrm{div}%
\mathbf{u}^{\varepsilon }+\alpha _{1}\theta _{1}^{\varepsilon
}),q_{1}\rangle _{V_{1}^{\varepsilon }{}^{\ast },V_{1}^{\varepsilon }} \\
\\
+\langle \partial _{t}(\phi _{2}p_{2}^{\varepsilon }+\beta _{2}\mathrm{div}%
\mathbf{u}^{\varepsilon }+\alpha _{2}\theta _{2}^{\varepsilon
}),q_{2}\rangle _{V_{2}^{\varepsilon }{}^{\ast },V_{2}^{\varepsilon }} \\
\\
+\int_{\Omega _{1}^{\varepsilon }}\kappa _{1}\nabla p_{1}^{\varepsilon
}\nabla q_{1}\hspace{0.03cm}\mathrm{d}x+\int_{\Omega _{2}^{\varepsilon
}}\kappa _{2}\nabla p_{2}^{\varepsilon }\nabla q_{2}\hspace{0.03cm}\mathrm{d}%
x \\
\\
+\int_{\Sigma ^{\varepsilon }}\varsigma ^{\varepsilon }(p_{1}^{\varepsilon
}-p_{2}^{\varepsilon })(q_{1}-q_{2})\hspace{0.03cm}\mathrm{d}s^{\varepsilon
}(x)=\int_{\Omega }g^{\varepsilon }q^{\varepsilon }\hspace{0.03cm}\mathrm{d}x%
\end{array}%
\right.  \label{e37}
\end{equation}%
and%
\begin{equation}
\left\{
\begin{array}{l}
\langle \partial _{t}(c_{1}\theta _{1}^{\varepsilon }+\gamma _{1}\mathrm{div}%
\mathbf{u}^{\varepsilon }+\alpha _{1}p_{1}^{\varepsilon }),q_{1}\rangle
_{V_{1}^{\varepsilon }{}^{\ast },V_{1}^{\varepsilon }} \\
\\
\langle \partial _{t}(c_{2}\theta _{2}^{\varepsilon }+\gamma _{2}\mathrm{div}%
\mathbf{u}^{\varepsilon }+\alpha _{2}p_{2}^{\varepsilon }),q_{2}\rangle
_{V_{2}^{\varepsilon }{}^{\ast },V_{2}^{\varepsilon }} \\
\\
+\int_{\Omega _{1}^{\varepsilon }}\hat{\lambda}_{1}\nabla \theta
_{1}^{\varepsilon }\nabla q_{1}\hspace{0.03cm}\mathrm{d}x+\int_{\Omega
_{2}^{\varepsilon }}\hat{\lambda}_{2}\nabla \theta _{2}^{\varepsilon }\nabla
q_{2}\hspace{0.03cm}\mathrm{d}x \\
\\
+\int_{\Sigma ^{\varepsilon }}\omega ^{\varepsilon }(\theta
_{1}^{\varepsilon }-\theta _{2}^{\varepsilon })(q_{1}-q_{2})\hspace{0.03cm}%
\mathrm{d}s^{\varepsilon }(x)=\int_{\Omega ^{\varepsilon }}h^{\varepsilon
}q^{\varepsilon }\hspace{0.03cm}\mathrm{d}x%
\end{array}%
\right.  \label{e38}
\end{equation}%
with the initial conditions:
\begin{equation}
\left\{
\begin{array}{l}
\mathbf{u}^{\varepsilon }(0,\cdot )=\mathbf{0}, \\
p_{1}^{\varepsilon }(0,x)=\theta _{1}^{\varepsilon }(0,x)=0,\ \ x\in \Omega
_{1}^{\varepsilon }, \\
p_{2}^{\varepsilon }(0,x)=\theta _{2}^{\varepsilon }(0,x)=0,\ \ x\in \Omega
_{2}^{\varepsilon }%
\end{array}%
\right.  \label{e39}
\end{equation}%
where we have denoted%
\begin{equation*}
q^{\varepsilon }\left( x\right) =\left\{
\begin{array}{c}
q_{1}\left( x\right) ,\ \ x\in \Omega _{1}^{\varepsilon }, \\
q_{2}\left( x\right) ,\ \ x\in \Omega _{2}^{\varepsilon }.%
\end{array}%
\right.
\end{equation*}
\end{definition}

Existence and uniqueness results \ for the system (\ref{e36})-(\ref{e39})
can be performed by using the Galerkin technique or the semi-group method.
For more details, we refer the reader for instance to Showalter and Momken
\cite{sm}. Hence we give without proof the following result.

\begin{theorem}
There exists a unique $($\textbf{$u$}$^{\varepsilon },p^{\varepsilon
},\theta ^{\varepsilon })\in L_{T}^{\infty }(\mathbf{V})\times
L_{T}^{2}(V^{\varepsilon })^{2}$ with $(p^{\varepsilon },\theta
^{\varepsilon })\in L_{T}^{\infty }(H^{\varepsilon })^{2}$ solution of the
weak system (\ref{e29})-(\ref{e34}) such that
\begin{equation}
\left.
\begin{array}{l}
\Vert \mathbf{u}^{\varepsilon }\Vert _{L_{T}^{\infty }(\mathbf{V})}\leq C,
\\
\Vert p^{\varepsilon }\Vert _{L_{T}^{2}(V^{\varepsilon })}+\Vert
p^{\varepsilon }\Vert _{L_{T}^{\infty }(H^{\varepsilon })}\leq C, \\
\Vert \theta ^{\varepsilon }\Vert _{L_{T}^{2}(V^{\varepsilon })}+\Vert
\theta ^{\varepsilon }\Vert _{L_{T}^{\infty }(H^{\varepsilon })}\leq C.%
\end{array}%
\right.  \label{e40}
\end{equation}
\end{theorem}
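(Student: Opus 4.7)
Existence and uniqueness are not the main issue here: I would cast (\ref{e36})--(\ref{e38}) as an abstract degenerate evolution system of the form $\frac{d}{dt}\mathcal{B}^\varepsilon U^\varepsilon + \mathcal{A}^\varepsilon U^\varepsilon = F^\varepsilon$ on $\mathbf{V}\times V^\varepsilon \times V^\varepsilon$, where $U^\varepsilon=(\mathbf{u}^\varepsilon, p^\varepsilon, \theta^\varepsilon)$, the storage operator $\mathcal{B}^\varepsilon$ encodes the quadratic form $\sum_m(\phi_m p_m^2 + c_m\theta_m^2 + 2\alpha_m p_m\theta_m)$ together with the elasticity--flow couplings, and $\mathcal{A}^\varepsilon$ gathers the elasticity, the Darcy/Fourier diffusion, and the interface exchange. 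Under the natural thermodynamic compatibility $\phi_m c_m > \alpha_m^2$, $\mathcal{B}^\varepsilon$ is symmetric positive and $\mathcal{A}^\varepsilon$ is $V^\varepsilon$-coercive and maximal monotone, so the theorem of Showalter--Momken \cite{sm} applies and delivers a unique solution with the claimed time regularity. The substantive content is therefore the $\varepsilon$-uniform a priori bound (\ref{e40}), which is what the homogenization procedure in the next section will actually rely on.

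To derive (\ref{e40}), I would substitute $\mathbf{v}=\partial_t \mathbf{u}^\varepsilon$ in (\ref{e36}), $q=p^\varepsilon$ in (\ref{e37}), and $q=\theta^\varepsilon$ in (\ref{e38}), and add the three identities. The decisive cancellation is between the coupling integrals $\int_{\Omega_m^\varepsilon}(\beta_m \nabla p_m^\varepsilon + \gamma_m \nabla \theta_m^\varepsilon)\cdot \partial_t \mathbf{u}_m^\varepsilon$ from the momentum test and the duality pairings $\langle \partial_t(\beta_m \mathrm{div}\,\mathbf{u}_m^\varepsilon),p_m^\varepsilon\rangle$ and $\langle \partial_t(\gamma_m \mathrm{div}\,\mathbf{u}_m^\varepsilon),\theta_m^\varepsilon\rangle$ from the mass/energy tests. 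After integration by parts in space these terms match up to a boundary integral on $\partial \Omega_m^\varepsilon$ which vanishes thanks to the stress continuity (\ref{e32b}) and the Dirichlet condition (\ref{e33}), so they cancel pairwise. What survives is the differential energy identity
\begin{equation*}
\tfrac{1}{2}\tfrac{d}{dt}\mathcal{E}^\varepsilon(t) + \mathcal{D}^\varepsilon(t) = \int_\Omega \mathbf{f}^\varepsilon\cdot \partial_t \mathbf{u}^\varepsilon + \int_\Omega g^\varepsilon p^\varepsilon + \int_\Omega h^\varepsilon \theta^\varepsilon,
\end{equation*}
where
\begin{equation*}
\mathcal{E}^\varepsilon(t) := \int_\Omega \mathbf{A}^\varepsilon e(\mathbf{u}^\varepsilon){:}e(\mathbf{u}^\varepsilon) + \sum_{m=1}^{2}\int_{\Omega_m^\varepsilon}\left(\phi_m |p_m^\varepsilon|^2 + c_m |\theta_m^\varepsilon|^2 + 2\alpha_m p_m^\varepsilon \theta_m^\varepsilon\right),
\end{equation*}
and $\mathcal{D}^\varepsilon$ collects the non-negative dissipation contributions $\sum_m \kappa_m \|\nabla p_m^\varepsilon\|_{L^2(\Omega_m^\varepsilon)}^2 + \sum_m \hat\lambda_m \|\nabla \theta_m^\varepsilon\|_{L^2(\Omega_m^\varepsilon)}^2 + \varepsilon \int_{\Sigma^\varepsilon}\varsigma\,(p_1^\varepsilon-p_2^\varepsilon)^2\,\mathrm{d}s^\varepsilon + \varepsilon \int_{\Sigma^\varepsilon}\omega\,(\theta_1^\varepsilon-\theta_2^\varepsilon)^2\,\mathrm{d}s^\varepsilon$.

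Integrating this identity on $(0,t)$, using $\mathbf{u}^\varepsilon(0)=0$ together with the time-independence of the loads to rewrite $\int_0^t\!\!\int_\Omega \mathbf{f}^\varepsilon\cdot\partial_s \mathbf{u}^\varepsilon\,\mathrm{d}x\,\mathrm{d}s = \int_\Omega \mathbf{f}^\varepsilon\cdot \mathbf{u}^\varepsilon(t)\,\mathrm{d}x$, I would then apply Young's inequality to absorb the right-hand side, Korn's inequality on $\mathbf{V}$ (which is $\varepsilon$-uniform because $\mathbf{V}=H_0^1(\Omega)^3$ is an $\varepsilon$-independent single-phase space), and a standard Gr\"onwall argument to conclude the three bounds in (\ref{e40}). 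By the very definition of the inner product on $V^\varepsilon$, the $L_T^2(V^\varepsilon)$ bounds on $p^\varepsilon$ and $\theta^\varepsilon$ then split into the gradient bounds and the $\varepsilon$-scaled interface-jump bounds, which will later produce the $L_{\#}^2(\Sigma)$ traces at the two-scale limit.

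The main obstacle I anticipate is the coercivity of the storage form $\phi_m p_m^2 + c_m\theta_m^2 + 2\alpha_m p_m\theta_m$, which requires the thermodynamic compatibility $\phi_m c_m - \alpha_m^2 > 0$; since this is not stated explicitly in the list of assumptions, it should be added as a natural hypothesis. A secondary technical point is justifying the use of $\partial_t \mathbf{u}^\varepsilon$ as a test function: this would be made rigorous by carrying out the estimates first on a Galerkin approximation and then passing to the limit, which is transparent because the cancellation of coupling terms is algebraic and hence preserved at each finite-dimensional step.
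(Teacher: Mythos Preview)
The paper does not actually prove this theorem: the sentence immediately preceding the statement reads ``Hence we give without proof the following result,'' and existence, uniqueness and the uniform bounds are simply attributed to the Galerkin or semigroup framework with a pointer to Showalter--Momken \cite{sm}. Your plan therefore supplies more than the paper itself, and it is the standard route: abstract degenerate-evolution theory for well-posedness, followed by the energy identity obtained by testing with $(\partial_t\mathbf{u}^\varepsilon,p^\varepsilon,\theta^\varepsilon)$ and exploiting the antisymmetry of the Biot/Duhamel couplings, then Korn, Young and Gr\"onwall.

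One technical remark: your justification of the cancellation at the interface is slightly off as worded. Starting literally from the form (\ref{e36}), integration by parts of $\sum_m\int_{\Omega_m^\varepsilon}\beta_m\nabla p_m^\varepsilon\cdot\partial_t\mathbf{u}^\varepsilon$ leaves a residual surface term $\int_{\Sigma^\varepsilon}(\beta_2 p_2^\varepsilon-\beta_1 p_1^\varepsilon)\,\partial_t\mathbf{u}^\varepsilon\!\cdot\!\mathbf{n}^\varepsilon$, which does \emph{not} vanish by (\ref{e32b}) alone because the pressures are discontinuous across $\Sigma^\varepsilon$. The clean argument is to test the momentum equation in the equivalent form $\int_\Omega\sigma^\varepsilon\!:\!e(\partial_t\mathbf{u}^\varepsilon)=\int_\Omega\mathbf{f}^\varepsilon\!\cdot\!\partial_t\mathbf{u}^\varepsilon$; it is \emph{here} that stress continuity (\ref{e32b}) together with displacement continuity (\ref{e32a}) kills the interface contribution, and the coupling then appears as $-\sum_m\int_{\Omega_m^\varepsilon}(\beta_m p_m^\varepsilon+\gamma_m\theta_m^\varepsilon)\,\partial_t\mathrm{div}\,\mathbf{u}^\varepsilon$, which cancels exactly against the mass and energy tests with no leftover surface integral. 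Your observation that coercivity of the storage form requires the thermodynamic compatibility $\phi_m c_m>\alpha_m^2$ is correct and is indeed an implicit hypothesis the paper leaves unstated.
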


The key ingredient of our convergence results are the uniform estimates (\ref%
{e40}) and the use of the two-scale convergence technique, see Sect. \ref%
{sec2} below. In order to give our main result, we introduce some notations
related to the local scale models: let $\mathbf{d}^{kl}=(y_{k}\delta
_{il})_{1\leq i\leq 3}$ and $\mathbf{w}^{ij}\in (H_{\#}^{1}(Y)/\mathbb{R}%
)^{3}$ denote the solution to the microscopic system:%
\begin{equation}
\left\{
\begin{array}{ll}
-\mathrm{div}_{y}(\mathbf{A}(e_{y}\left( \mathbf{w}^{ij}+\mathbf{d}%
^{ij}\right) ))=0 & \text{ a.e. in }Y, \\
\left[ \mathbf{w}^{ij}\right] =0 & \text{ a.e. on }\Sigma , \\
y\longmapsto \mathbf{w}^{ij}(x,y) & \text{ is }Y-\text{periodic}%
\end{array}%
\right.  \label{e41}
\end{equation}%
where
\begin{equation*}
\mathbf{A}\left( y\right) :=\left\{
\begin{array}{c}
\mathbf{A}_{1},\ \ y\in Y_{1}, \\
\mathbf{A}_{2},\ \ y\in Y_{2}.%
\end{array}%
\right.
\end{equation*}%
and $\left[ \mathbf{\cdot }\right] $ denotes the jump on $\Sigma $. Let us
define the "micro-pressure" $\pi _{m}^{i}\in H_{\#}^{1}(Y_{m})/\mathbb{R}$ ($%
i=1,2,3$) to be the solution of
\begin{equation}
\left\{
\begin{array}{ll}
-\mathrm{div}\left( \kappa _{m}\left( e^{i}+\nabla _{y}\pi _{m}^{i}\left(
y\right) \right) \right) =0 & \text{ in }\Omega \times Y_{m}, \\
\left[ \kappa _{m}\left( e^{i}+\nabla _{y}\pi _{m}^{i}\left( y\right)
\right) \right] \cdot \nu \left( y\right) =0 & \text{ on }\Omega \times
\Sigma , \\
y\longmapsto \pi _{m}^{i}\left( y\right) & \ \text{is }Y-\text{periodic.}%
\end{array}%
\right.  \label{e42}
\end{equation}%
Here $e^{i}$ is the $i^{\text{th}}$ vector of the standard basis of $\mathbb{%
R}^{3}$. Likewise, let the "micro-temperature" $\vartheta _{m}^{i}\in
H^{1}(Y_{m})/\mathbb{R}$ to be the solution of
\begin{equation}
\left\{
\begin{array}{ll}
-\mathrm{div}\left( \hat{\lambda}_{m}\left( y\right) \left( e^{i}+\nabla
_{y}\vartheta _{m}^{i}\right) \right) =0 & \text{in }\Omega \times Y_{m}, \\
\left[ \hat{\lambda}_{m}\left( y\right) \left( e^{i}+\nabla _{y}\vartheta
_{m}^{i}\right) \right] \cdot \nu =0 & \text{on }\Omega \times \Sigma , \\
y\longmapsto \vartheta _{m}^{i} & \text{is }Y-\text{periodic.}%
\end{array}%
\right.  \label{e43}
\end{equation}

\begin{remark}
It is worthwhile noticing that these three boundary value problems (\ref{e41}%
), (\ref{e42}) and (\ref{e43}) are well-posed in the sense that each problem
admits a unique weak solution (see for instance A. Bensoussan \& al. \cite%
{blp}).
\end{remark}

Let us denote
\begin{equation}
\mathcal{A}_{ijkl}:=\int_{Y}\mathbf{A}e_{y}\left( \mathbf{w}^{ij}+\mathbf{d}%
^{ij}\right) e_{y}\left( \mathbf{w}^{kl}+\mathbf{d}^{kl}\right) \hspace{%
0.03cm}\mathrm{d}x,\ \sigma _{ij}^{0}\left( \mathbf{u}\right) :=\mathcal{A}%
_{ijkl}e_{kh}(\mathbf{u}).  \label{e44}
\end{equation}%
In other words
\begin{equation}
\mathcal{A}_{ijkh}=\sum_{r,s=1}^{3}\int_{Y}a_{ijrs}(y)(\delta _{ir}\delta
_{js}+e_{rs,y}(\mathbf{w}^{kh})(y))\hspace{0.03cm}\mathrm{d}y.  \label{e45}
\end{equation}

Put
\begin{eqnarray*}
B_{m} &=&\left( B_{m,ij}\right) _{1\leq i,j\leq 3},D_{m}=\left(
D_{m,ij}\right) _{1\leq i,j\leq 3},\ \mathcal{C}_{m}=\left( \mathcal{C}%
_{m,ij}\right) _{1\leq i,j\leq 3}, \\
\mathcal{K}_{m} &=&\left( \mathcal{K}_{m,ij}\right) _{1\leq i,j\leq 3},\
\mathcal{L}_{m}=\left( \mathcal{L}_{m,ij}\right) _{1\leq i,j\leq 3}
\end{eqnarray*}%
where
\begin{eqnarray}
B_{m,ij} &:&=\beta _{m}\int_{Y}\chi _{m}\left( y\right) \left( \delta _{ij}+%
\frac{\partial \pi _{m}^{i}}{\partial y_{j}}(y)\right) \hspace{0.03cm}%
\mathrm{d}y,  \label{e46} \\
D_{m,ij} &:&=\gamma _{m}\int_{Y}\chi _{m}\left( y\right) \left( \delta _{ij}+%
\frac{\partial \vartheta _{m}^{i}}{\partial y_{j}}(y)\right) \hspace{0.03cm}%
\mathrm{d}y,  \label{e47} \\
\mathcal{C}_{m,ij} &:&=\int_{Y_{m}}\left( \delta _{j}^{i}+\mathrm{div}\left(
\mathbf{w}^{ij}\right) \right) \hspace{0.03cm}\mathrm{d}y  \label{e48}
\end{eqnarray}%
and
\begin{equation}
\mathcal{K}_{m,ij}:=\int_{Y_{m}}\kappa _{m}\left( \delta _{i}^{j}+\frac{%
\partial \pi _{mj}}{\partial y_{i}}\right) \hspace{0.03cm}\mathrm{d}y,\
\mathcal{L}_{m,ij}:=\int_{Y_{m}}\hat{\lambda}_{m}\left( \delta _{i}^{j}+%
\frac{\partial \vartheta _{mj}}{\partial y_{i}}\right) \hspace{0.03cm}%
\mathrm{d}y.  \label{e49}
\end{equation}

Set
\begin{equation}
\left\{
\begin{array}{ccccc}
\mathbf{f}^{\ast }:=\left\vert Y_{1}\right\vert \mathbf{f}_{1}\mathbf{+}%
\left\vert Y_{2}\right\vert \mathbf{f}_{2}, &  & g_{m}^{\ast }:=\left\vert
Y_{m}\right\vert g_{m},\  &  & h_{m}^{\ast }:=\left\vert Y_{m}\right\vert
h_{m}, \\
&  &  &  &  \\
c_{m}^{\ast }:=\left\vert Y_{m}\right\vert c_{m}, &  & \varphi _{m}^{\ast
}:=\left\vert Y_{m}\right\vert \phi _{m}, &  & \gamma _{m}^{\ast
}:=\left\vert Y_{m}\right\vert \gamma _{m}, \\
&  &  &  &  \\
\omega ^{\ast }:=\int_{\Sigma }\omega \left( y\right) \hspace{0.03cm}\mathrm{%
d}s\left( y\right) , &  & \zeta ^{\ast }:=\int_{\Sigma }\varsigma \left(
y\right) \hspace{0.03cm}\mathrm{d}s\left( y\right) . &  & \alpha _{m}^{\ast
}:=\left\vert Y_{m}\right\vert \alpha _{m}%
\end{array}%
\right.  \label{e50}
\end{equation}%
where $\left\vert Y_{m}\right\vert $ denotes the volume of $Y_{m}$ and $%
\mathrm{d}s\left( y\right) $ the surfacic measure of $\Sigma $.

Now we state the main result of the paper.

\begin{theorem}
\label{t1}Let $($\textbf{$u$}$^{\varepsilon },p^{\varepsilon },\theta
^{\varepsilon })\in L_{T}^{\infty }(\mathbf{V})\times
L_{T}^{2}(V^{\varepsilon })^{2}$ be the weak solution of (\ref{e29})-(\ref%
{e34}). Then, up to a subsequence, there exist $\mathbf{u}\in L_{T}^{\infty
}(\mathbf{V})$, $p_{1},\theta _{1}\in L_{T}^{\infty }(H_{0}^{1}(\Omega ))$
and $p_{2},\theta _{2}\in L_{T}^{\infty }(H^{1}(\Omega ))$ such that the
following weak limits holds: for a.e. $t\in (0,T)$
\begin{eqnarray*}
\mathbf{u}^{\varepsilon }(t,\cdot ) &\rightharpoonup &\mathbf{u}(t,\cdot )%
\mathbf{,}\ \text{weakly in }L_{T}^{\infty }(\mathbf{V}), \\
\chi _{m}^{\varepsilon }p_{m}^{\varepsilon }(t,\cdot ) &\rightharpoonup
&\chi _{m}p_{m\,}(t,\cdot ),\ \theta _{m}^{\varepsilon }(t,\cdot )\overset{%
2-s}{\rightharpoonup }\chi _{m}\theta _{m\,}(t,\cdot ),\ \text{weakly in }%
L_{T}^{\infty }(H^{1}(\Omega )).
\end{eqnarray*}%
Furthermore, the weak limit $\left( \mathbf{u},p_{m},\theta _{m}\right) $\
is solution of the following homogenized system:%
\begin{equation}
\left\{
\begin{array}{l}
-\mathrm{div}\left( \sigma ^{0}\left( \mathbf{u}\right) \right) +B_{1}\nabla
p_{1}+B_{2}\nabla p_{2}+D_{1}\nabla \theta _{1}+D_{2}\nabla \theta _{2}=%
\mathbf{f}\text{, a.e in }Q\text{,} \\
\\
\partial _{t}\left( \phi _{1}^{\ast }p_{1}+\beta _{1}\mathcal{C}_{1}:e\left(
\mathbf{u}\right) +\alpha _{1}^{\ast }\theta _{1}\right) -\mathrm{div}\left(
\mathcal{K}_{1}\nabla p_{1}\right) +\zeta ^{\ast }\left( p_{1}-p_{2}\right)
=g_{1}^{\ast }\text{, a.e. in }Q\text{,} \\
\\
\partial _{t}\left( \phi _{2}^{\ast }p_{2}+\beta _{2}\mathcal{C}_{2}:e\left(
\mathbf{u}\right) +\alpha _{2}^{\ast }\theta _{2}\right) -\mathrm{div}\left(
\mathcal{K}_{2}\nabla p_{2}\right) +\zeta ^{\ast }\left( p_{2}-p_{1}\right)
=g_{2}^{\ast }\text{, a.e.in }Q\text{,} \\
\\
\partial _{t}\left\{ c_{1}^{\ast }\theta _{1}+\gamma _{1}\mathcal{C}%
_{1}:e\left( \mathbf{u}\right) +\alpha _{1}^{\ast }p_{1}\right\} -\mathrm{div%
}\left( \mathcal{L}_{1}\nabla \theta _{1}\right) +\omega ^{\ast }\left(
\theta _{1}-\theta _{2}\right) =h_{1}^{\ast }\text{, a.e.in }Q\text{,} \\
\\
\partial _{t}\left\{ c_{2}^{\ast }\theta _{2}+\gamma _{2}\mathcal{C}%
_{2}:e\left( \mathbf{u}\right) +\alpha _{2}^{\ast }p_{2}\right\} -\mathrm{div%
}\left( \mathcal{L}_{2}\nabla \theta _{2}\right) +\omega ^{\ast }\left(
\theta _{2}-\theta _{1}\right) =h_{2}^{\ast }\text{, a.e.in }Q\text{,} \\
\\
\mathbf{u}=0,\ \theta _{1}=p_{1}=0\quad \text{a.e. on }\Gamma _{T}, \\
\\
\mathcal{L}_{2}\nabla \theta _{2}\cdot \nu =\mathcal{K}_{2}\nabla p_{2}\cdot
\nu =0,\quad \text{a.e. on }\Gamma _{T}, \\
\\
\mathbf{u}(0,x)=\mathbf{0},\quad \theta _{m}\left( 0,x\right)
=p_{m}(0,x)=0\quad \text{a.e. in }\Omega .%
\end{array}%
\right.  \label{e51}
\end{equation}
\end{theorem}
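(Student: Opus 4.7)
The plan is to apply the two-scale convergence framework of Nguetseng--Allaire to the weak formulations (\ref{e36})--(\ref{e38}), using the uniform bounds (\ref{e40}) as compactness input, and to identify the limits through the cell problems (\ref{e41})--(\ref{e43}).

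First I would combine (\ref{e40}) with the classical two-scale compactness results, both in the bulk and on periodic surfaces (Allaire--Damlamian--Hornung, Neuss-Radu), together with suitable extension operators from the disconnected inclusion domain $\Omega_2^\varepsilon$ onto $\Omega$, to extract a subsequence yielding the limits $\mathbf{u},p_m,\theta_m$ appearing in the theorem plus correctors $\mathbf{u}_1\in L_T^\infty(L^2(\Omega;H_\#^1(Y)/\mathbb{R}))^3$ and $p_{1m},\vartheta_{1m}\in L_T^2(L^2(\Omega;H_\#^1(Y_m)/\mathbb{R}))$ such that
\begin{equation*}
e(\mathbf{u}^\varepsilon)\overset{2-s}{\rightharpoonup}e_x(\mathbf{u})+e_y(\mathbf{u}_1),\qquad \chi_m^\varepsilon\nabla p_m^\varepsilon\overset{2-s}{\rightharpoonup}\chi_m(\nabla_x p_m+\nabla_y p_{1m}),
\end{equation*}
and analogously for $\theta$. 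The surface two-scale principle provides, for any $\varphi\in\mathcal{D}(\Omega;C_\#(Y))$,
\begin{equation*}
\varepsilon\int_{\Sigma^\varepsilon}\varphi(x,x/\varepsilon)\,\mathrm{d}s^\varepsilon(x)\longrightarrow \int_\Omega\int_\Sigma \varphi(x,y)\,\mathrm{d}s(y)\,\mathrm{d}x,
\end{equation*}
which is crucial for handling the Deresiewicz--Skalak and Newton cooling interfacial terms.

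Next, I would test (\ref{e36}) with $\mathbf{v}_\varepsilon(x)=\mathbf{v}(x)+\varepsilon\mathbf{v}_1(x,x/\varepsilon)$ for smooth $\mathbf{v}_1$ that is $Y$-periodic in $y$; passing to the two-scale limit yields a variational identity in $(\mathbf{u},\mathbf{u}_1)$. Choosing $\mathbf{v}=0$ identifies the corrector $\mathbf{u}_1(x,y)=e_{kh}(\mathbf{u})(x)\,\mathbf{w}^{kh}(y)$ via problem (\ref{e41}); substituting back produces the macroscopic elasticity equation with effective tensor $\mathcal{A}_{ijkl}$ from (\ref{e44})--(\ref{e45}), while the pressure/temperature contributions convert to $B_m\nabla p_m+D_m\nabla\theta_m$ after integration by parts against the cell solutions $\pi_m^i,\vartheta_m^i$ of (\ref{e42})--(\ref{e43}). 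Then, for the pressure equation (\ref{e37}) I would use test functions $q_m(x)+\varepsilon\psi_m(x,x/\varepsilon)$ with $\psi_m\in\mathcal{D}(\Omega;H_\#^1(Y_m)/\mathbb{R})$; the two-scale limit of the diffusion terms decouples into one macroscopic equation per phase with permeability matrices $\mathcal{K}_m$ from (\ref{e49}), via the cell problem (\ref{e42}). The accumulation terms $\partial_t(\beta_m\,\mathrm{div}\,\mathbf{u}^\varepsilon)$ become $\partial_t(\beta_m\mathcal{C}_m:e(\mathbf{u}))$ because integrating $\mathrm{div}_x\mathbf{u}+\mathrm{div}_y\mathbf{u}_1$ against $\chi_m$ over the cell yields $\mathcal{C}_m$ as in (\ref{e48}). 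The surface coupling converges to
\begin{equation*}
\varepsilon\int_{\Sigma^\varepsilon}\varsigma(x/\varepsilon)(p_1^\varepsilon-p_2^\varepsilon)(q_1-q_2)\,\mathrm{d}s^\varepsilon(x)\longrightarrow\int_\Omega \zeta^\ast(p_1-p_2)(q_1-q_2)\,\mathrm{d}x
\end{equation*}
since $p_m$ is independent of $y$; the temperature equation (\ref{e38}) is treated identically, giving $\mathcal{L}_m$ and $\omega^\ast$. Uniqueness of the homogenized system, proved by an energy argument analogous to the microscopic one, then upgrades the convergence from a subsequence to the full family.

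The main obstacle is the rigorous treatment of the oscillating interface $\Sigma^\varepsilon$: one must secure a uniform $L^2(\Sigma^\varepsilon)$ bound on the jumps $p_1^\varepsilon-p_2^\varepsilon$ and $\theta_1^\varepsilon-\theta_2^\varepsilon$ carrying the correct $\sqrt{\varepsilon}$-weight, which is precisely what the scaling $\varsigma^\varepsilon,\omega^\varepsilon=\mathcal{O}(\varepsilon)$ and the $V^\varepsilon$-norm in (\ref{e40}) guarantee, and then apply the periodic surface two-scale convergence. A secondary difficulty is constructing uniformly bounded $H^1$-extensions of $(p_2^\varepsilon,\theta_2^\varepsilon)$ from $\Omega_2^\varepsilon$ to all of $\Omega$, which is what makes the limits $p_2,\theta_2$ live in $L_T^\infty(H^1(\Omega))$ without inheriting Dirichlet data on $\Gamma$, thereby producing the homogeneous Neumann conditions $\mathcal{K}_2\nabla p_2\cdot\nu=\mathcal{L}_2\nabla\theta_2\cdot\nu=0$ on $\Gamma_T$ in the limit.
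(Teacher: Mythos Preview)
Your proposal is correct and follows essentially the same route as the paper: extract two-scale limits and correctors from the uniform bounds (\ref{e40}) via the Nguetseng--Allaire compactness theorems (bulk and periodic-surface versions), identify the correctors through the cell problems (\ref{e41})--(\ref{e43}) by testing the weak formulations with oscillating functions, and then read off the macroscopic system (\ref{e51}). The only cosmetic difference is that you use combined test functions $\mathbf{v}(x)+\varepsilon\mathbf{v}_1(x,x/\varepsilon)$ and then specialise, whereas the paper treats the two scales in separate lemmata (first $\varepsilon\hat{\mathbf{v}}(x,x/\varepsilon)$ alone to isolate the cell problem, then $\mathbf{v}(x)$ alone for the macroscopic equation); both are standard and equivalent. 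Your final uniqueness step is an extra observation not needed for the theorem as stated, since it only claims convergence up to a subsequence.
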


The next section is devoted to the proof of Theorems \ref{t1}.

\section{Derivation of the homogenized model\label{dhm}}

In this section, we shall first recall \ the two scale convergence
technique. Then we shall use the uniform estimates (\ref{e40}) to show that
the solution to the microscopic model (\ref{e29})-(\ref{e34}) converges in
the two-scale sense to the solution of the homogenized problem (\ref{e51}).

\subsection{Two scale convergence\label{sec2}}

The two-scale convergence method was first introduced by G. Nguetseng \cite%
{ngue} and later developed by G. Allaire \cite{all}. This technique is
intended to handle homogenization problems involving periodic
microstructures. Hereafter, we recall its definition and its main results.
For more details, we refer the reader to \cite{all, lnw, ngue}\textit{.}

We denote $\mathcal{C}_{\#}(Y)$ to be the space of all continuous functions
in $\mathbb{R}^{3}$ which are $Y$-periodic. Let $\mathcal{C}_{\#}^{\infty
}(Y)$ denote the subspace of $\mathcal{C}_{\#}(Y)$ of infinitely
differentiable functions.\textsl{\ }

\begin{lemma}
\label{l1}Let $q\in L^{2}(\Omega ;C_{\#}(Y))$. Then $q\left( x,x/\varepsilon
\right) \in L^{2}\left( \Omega \right) $ and
\begin{eqnarray*}
\int_{\Omega }\left\vert q(x,\frac{x}{\varepsilon })\right\vert ^{2}\mathrm{d%
}x &\leq &\int_{\Omega }\sup_{y\in Y}\left\vert q(x,y)\right\vert ^{2}%
\hspace{0.03cm}\mathrm{d}x, \\
\lim_{\varepsilon \rightarrow 0}\int_{\Omega }\left\vert q(x,\frac{x}{%
\varepsilon })\right\vert ^{2}\mathrm{d}x &=&\lim_{\varepsilon \rightarrow
0}\int_{\Omega \times Y}q(x,y)^{2}\hspace{0.03cm}\mathrm{d}x\mathrm{d}y.
\end{eqnarray*}
\end{lemma}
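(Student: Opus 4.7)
The plan is to handle the two assertions separately: first measurability and the pointwise domination, then the limit by a density reduction to tensor products where the classical Riemann--Lebesgue oscillation lemma applies.

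For measurability and the pointwise bound, since $q \in L^{2}(\Omega; C_{\#}(Y))$ and $C_{\#}(Y)$ (with the sup norm) is separable, I would choose a strongly measurable Bochner representative of $q$ and approximate it in $L^{2}(\Omega;C_{\#}(Y))$ by simple functions of the form $\sum_{i} \chi_{A_{i}}(x) \beta_{i}(y)$ with $\beta_{i} \in C_{\#}(Y)$ and $A_{i} \subset \Omega$ measurable. For each such simple representative, $x \mapsto q(x, x/\varepsilon)$ is manifestly Lebesgue measurable. The pointwise estimate $|q(x, x/\varepsilon)|^{2} \le \sup_{y \in Y} |q(x,y)|^{2} = \|q(x, \cdot)\|_{C_{\#}(Y)}^{2}$ is immediate; the right-hand side lies in $L^{1}(\Omega)$ by the definition of the Bochner space, and integrating yields the first displayed inequality. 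Passing to the limit along the approximating simple functions transfers measurability and the bound to general $q$.

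For the convergence, I would use a density argument. Tensor products $\alpha(x)\beta(y)$ with $\alpha \in C_{c}^{\infty}(\Omega)$ and $\beta \in \mathcal{C}_{\#}^{\infty}(Y)$ span a dense subspace of $L^{2}(\Omega; C_{\#}(Y))$. For such a product, the classical Riemann--Lebesgue oscillation result gives $\beta(\cdot/\varepsilon)^{2} \rightharpoonup \int_{Y}\beta(y)^{2}\, dy$ weak-$*$ in $L^{\infty}(\Omega)$ (using $|Y|=1$), so
\[
\int_{\Omega} \alpha(x)^{2} \beta(x/\varepsilon)^{2}\, dx \longrightarrow \int_{\Omega} \alpha(x)^{2}\, dx \int_{Y} \beta(y)^{2}\, dy = \int_{\Omega \times Y} \bigl(\alpha(x)\beta(y)\bigr)^{2} \, dx\, dy.
\]
Finite linear combinations are handled identically after expanding the square, each cross-term treated by the same oscillation argument. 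To pass to a general $q$, I would fix $\eta > 0$, choose a finite-sum approximant $q_{\eta}$ with $\|q - q_{\eta}\|_{L^{2}(\Omega; C_{\#}(Y))} < \eta$, and apply the first inequality of the lemma to $q - q_{\eta}$ together with the triangle inequality to bound the discrepancy between the $\varepsilon$-integrals of $q^{2}$ and $q_{\eta}^{2}$ by $O(\eta)$ uniformly in $\varepsilon$; the same bound controls the corresponding $\Omega \times Y$ integrals. Sending $\varepsilon \to 0$ first and then $\eta \to 0$ concludes.

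The main obstacle is not any deep analytic step but the initial measurability technicality: $q$ is a priori only measurable with values in the function space $C_{\#}(Y)$, and extracting a pointwise-measurable real-valued trace $x \mapsto q(x, x/\varepsilon)$ requires the simple-function reduction based on separability of $C_{\#}(Y)$. Once this is in place, the density argument combined with weak-$*$ convergence of $\varepsilon$-periodic functions is standard two-scale material as in the references cited in the text.
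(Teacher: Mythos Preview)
Your proposal is correct and follows the standard route (measurability via simple-function approximation in the separable Banach space $C_{\#}(Y)$, then a density reduction to tensor products combined with the weak-$*$ convergence of periodic oscillations). Note, however, that the paper does not supply its own proof of this lemma: it is stated in Section~\ref{sec2} as a recalled result from the two-scale convergence literature, with the reader referred to \cite{all, lnw, ngue} for details. Your argument is essentially the one given in those references (in particular Allaire \cite{all}, Lemma~1.3), so there is nothing to compare against within the paper itself.
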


Such a function will be called in the sequel an admissible test function.

\begin{definition}
\label{d1} A sequence $(v^{\varepsilon })\ $in $L^{2}(\Omega )$ two-scale
converges to $v\in L^{2}(\Omega \times Y)$ and we write $v^{\varepsilon }%
\overset{2-s}{\rightharpoonup }v$ if, for any $q\in L^{2}(\Omega ;C_{\#}(Y))$%
,
\begin{equation*}
\lim_{\varepsilon \rightarrow 0}\int_{\Omega }v^{\varepsilon }(x)q(x,\frac{x%
}{\varepsilon })\hspace{0.03cm}\mathrm{d}x=\int_{\Omega \times Y}v(x,y)q(x,y)%
\hspace{0.03cm}\mathrm{d}x\mathrm{d}y.
\end{equation*}%
\
\end{definition}

\begin{theorem}
\label{t3} Let $(v^{\varepsilon })$ be a sequence of functions in $%
L^{2}(\Omega )$ which is uniformly bounded. Then, there exist $v\in
L^{2}(\Omega \times Y)$ and a subsequence of $(v^{\varepsilon })$ which
two-scale converges to $v$.
\end{theorem}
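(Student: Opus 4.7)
The plan is to follow the functional-analytic approach of Nguetseng and Allaire: view each $v^{\varepsilon}$ as defining a linear functional on the space of admissible test functions, extract a weak-$\ast$ limit, and then use Lemma \ref{l1} to show that this limit extends continuously to $L^{2}(\Omega\times Y)$ so that Riesz representation produces the two-scale limit $v$.

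First I would define, for each $\varepsilon>0$, the linear functional
\begin{equation*}
L_{\varepsilon}: L^{2}(\Omega;\mathcal{C}_{\#}(Y))\longrightarrow \mathbb{R},\qquad L_{\varepsilon}(q):=\int_{\Omega} v^{\varepsilon}(x)\,q\!\left(x,\tfrac{x}{\varepsilon}\right)\hspace{0.03cm}\mathrm{d}x.
\end{equation*}
By Cauchy--Schwarz and the first estimate of Lemma \ref{l1},
\begin{equation*}
|L_{\varepsilon}(q)|\leq \|v^{\varepsilon}\|_{L^{2}(\Omega)}\left(\int_{\Omega}\sup_{y\in Y}|q(x,y)|^{2}\hspace{0.03cm}\mathrm{d}x\right)^{\!1/2}\leq C\,\|q\|_{L^{2}(\Omega;\mathcal{C}_{\#}(Y))},
\end{equation*}
with $C$ independent of $\varepsilon$ since $(v^{\varepsilon})$ is uniformly bounded in $L^{2}(\Omega)$. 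Hence $(L_{\varepsilon})$ is a bounded family in the dual of the separable Banach space $L^{2}(\Omega;\mathcal{C}_{\#}(Y))$.

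Next I would invoke the Banach--Alaoglu theorem (using separability to extract a sequence rather than a net) to obtain a subsequence, still denoted $(L_{\varepsilon})$, and a continuous linear functional $L$ on $L^{2}(\Omega;\mathcal{C}_{\#}(Y))$ such that $L_{\varepsilon}(q)\to L(q)$ for every admissible $q$. The decisive step is to upgrade the continuity of $L$: applying Cauchy--Schwarz again and then the second identity of Lemma \ref{l1}, I can pass to the limit in
\begin{equation*}
|L_{\varepsilon}(q)|\leq \|v^{\varepsilon}\|_{L^{2}(\Omega)}\left(\int_{\Omega}\bigl|q(x,\tfrac{x}{\varepsilon})\bigr|^{2}\hspace{0.03cm}\mathrm{d}x\right)^{\!1/2}
\end{equation*}
to conclude that $|L(q)|\leq C\,\|q\|_{L^{2}(\Omega\times Y)}$ for every $q\in L^{2}(\Omega;\mathcal{C}_{\#}(Y))$.

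Finally, since $L^{2}(\Omega;\mathcal{C}_{\#}(Y))$ is dense in $L^{2}(\Omega\times Y)$, the functional $L$ admits a unique continuous extension to $L^{2}(\Omega\times Y)$, and the Riesz representation theorem yields a unique $v\in L^{2}(\Omega\times Y)$ with
\begin{equation*}
L(q)=\int_{\Omega\times Y} v(x,y)\,q(x,y)\hspace{0.03cm}\mathrm{d}x\mathrm{d}y,\qquad q\in L^{2}(\Omega;\mathcal{C}_{\#}(Y)).
\end{equation*}
This is exactly the two-scale convergence $v^{\varepsilon}\overset{2-s}{\rightharpoonup} v$ in the sense of Definition \ref{d1}. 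The main obstacle is the extension step: the functional $L$ is initially defined only on the smaller space $L^{2}(\Omega;\mathcal{C}_{\#}(Y))$, and the key point that makes the argument work is that Lemma \ref{l1} provides the $L^{2}(\Omega\times Y)$-bound on $L$, turning the a priori weak-$\ast$ limit on a space of continuous representatives into a bona fide $L^{2}$ function on $\Omega\times Y$.
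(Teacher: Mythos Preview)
The paper does not actually prove Theorem \ref{t3}; it is stated as a recalled result from the two-scale convergence literature, with the reader referred to \cite{all, lnw, ngue} for details. Your argument is correct and is precisely the standard proof given by Nguetseng and Allaire in those references: uniform boundedness of the functionals $L_{\varepsilon}$ on the separable space $L^{2}(\Omega;\mathcal{C}_{\#}(Y))$, sequential Banach--Alaoglu to extract a weak-$\ast$ limit, the sharper $L^{2}(\Omega\times Y)$ bound coming from the second part of Lemma \ref{l1}, and finally density plus Riesz representation to produce $v\in L^{2}(\Omega\times Y)$.
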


\begin{remark}
\label{r1}Thanks to Theorem \ref{t3}, it is easy to see that for all $q\in
L^{2}(\Omega ;C_{\#}(Y))$%
\begin{equation*}
\lim_{\varepsilon \rightarrow 0}\int_{\Omega _{m}^{\varepsilon }}\mathbf{f}%
_{m}(x)q(x,\frac{x}{\varepsilon })\hspace{0.03cm}\mathrm{d}x=\int_{\Omega
\times Y_{m}}\mathbf{f}_{m}(x)q(x,y)\hspace{0.03cm}\mathrm{d}x\mathrm{d}y
\end{equation*}%
since
\begin{equation*}
\int_{\Omega _{m}^{\varepsilon }}\mathbf{f}_{m}(x)q(x,\frac{x}{\varepsilon })%
\hspace{0.03cm}\mathrm{d}x=\int_{\Omega }\mathbf{f}_{m}(x)\chi _{m}\left(
\frac{x}{\varepsilon }\right) q(x,\frac{x}{\varepsilon })\hspace{0.03cm}%
\mathrm{d}x
\end{equation*}%
and $x\longmapsto \chi _{m}\left( \frac{x}{\varepsilon }\right) q(x,\frac{x}{%
\varepsilon })$ is an admissible test function.
\end{remark}

\begin{theorem}
\label{t4} Let $(v^{\varepsilon })$ be a uniformly bounded sequence in $%
H^{1}(\Omega )$ (resp. $H_{0}^{1}(\Omega )$). Then there exist $v\in
H^{1}(\Omega )$ (resp. $H_{0}^{1}(\Omega )$) and $\hat{v}\in L^{2}(\Omega
;H_{\#}^{1}(Y)/\mathbb{R})$ such that, up to a subsequence,
\begin{equation}
v^{\varepsilon }\overset{2-s}{\rightharpoonup }v;\quad \nabla v^{\varepsilon
}\overset{2-s}{\rightharpoonup }\nabla v+\nabla _{y}\hat{v}.  \label{t4e1}
\end{equation}
\end{theorem}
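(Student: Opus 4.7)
The plan is to apply Theorem \ref{t3} twice and then identify the structure of the two limits via two specific choices of test fields. Since $(v^\varepsilon)$ is bounded in $H^1(\Omega)$, both $(v^\varepsilon)$ and $(\nabla v^\varepsilon)$ are bounded in $L^2$, so Theorem \ref{t3} produces, along a subsequence, $v_0\in L^2(\Omega\times Y)$ and $\xi\in L^2(\Omega\times Y)^3$ with $v^\varepsilon\overset{2-s}{\rightharpoonup}v_0$ and $\nabla v^\varepsilon\overset{2-s}{\rightharpoonup}\xi$. Simultaneously, Banach--Alaoglu gives $v^\varepsilon\rightharpoonup v$ weakly in $H^1(\Omega)$ along a further subsequence; when $v^\varepsilon\in H^1_0(\Omega)$, the weak closedness of $H^1_0(\Omega)$ in $H^1(\Omega)$ places $v$ in $H^1_0(\Omega)$ as well.

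First I would show that $v_0$ is independent of $y$. For $\Psi\in\mathcal{D}(\Omega;\mathcal{C}^\infty_\#(Y))^3$ the test function $\varepsilon\Psi(x,x/\varepsilon)$ has compact support in $\Omega$, and integration by parts yields
\begin{equation*}
\varepsilon\int_\Omega \nabla v^\varepsilon\cdot\Psi(x,x/\varepsilon)\,\mathrm{d}x
=-\int_\Omega v^\varepsilon\bigl[\varepsilon\,\mathrm{div}_x\Psi+\mathrm{div}_y\Psi\bigr](x,x/\varepsilon)\,\mathrm{d}x.
\end{equation*}
Both the left-hand side and the $\varepsilon\,\mathrm{div}_x\Psi$ contribution on the right are $O(\varepsilon)$, so passing to the two-scale limit gives $\int_{\Omega\times Y}v_0\,\mathrm{div}_y\Psi\,\mathrm{d}x\,\mathrm{d}y=0$ for every such $\Psi$. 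Hence $\nabla_y v_0=0$ in the distributional sense on $Y$, so $v_0(x,y)$ depends only on $x$; identifying the $y$-mean of $v_0$ with the weak $L^2$-limit of $v^\varepsilon$ forces $v_0=v$.

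Next I would identify $\xi$ by testing against $y$-solenoidal fields. For $\Psi\in\mathcal{D}(\Omega;\mathcal{C}^\infty_\#(Y))^3$ with $\mathrm{div}_y\Psi\equiv 0$, the $\varepsilon^{-1}\mathrm{div}_y$ term drops out of the integration by parts, leaving
\begin{equation*}
\int_\Omega \nabla v^\varepsilon\cdot\Psi(x,x/\varepsilon)\,\mathrm{d}x=-\int_\Omega v^\varepsilon\,\mathrm{div}_x\Psi(x,x/\varepsilon)\,\mathrm{d}x.
\end{equation*}
Passing to the two-scale limit and using $v_0=v$ produces $\int_{\Omega\times Y}\xi\cdot\Psi\,\mathrm{d}x\,\mathrm{d}y=-\int_{\Omega\times Y}v\,\mathrm{div}_x\Psi\,\mathrm{d}x\,\mathrm{d}y$. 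Specializing to $\Psi(x,y)=\phi(x)e^k$ with $\phi\in\mathcal{D}(\Omega)$ recovers $\partial_{x_k}v=\int_Y\xi_k(\cdot,y)\,\mathrm{d}y\in L^2(\Omega)$, so $v\in H^1(\Omega)$; the general identity then reads $\int_{\Omega\times Y}(\xi-\nabla v)\cdot\Psi\,\mathrm{d}x\,\mathrm{d}y=0$ for every $y$-solenoidal admissible $\Psi$.

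The main obstacle is the last step: reconstructing $\hat v$ from this orthogonality. Invoking a parametrized De Rham / Helmholtz decomposition on the torus $Y$, applied for a.e.\ $x\in\Omega$ with $L^2(\Omega)$-control over the $x$-dependence (cf.\ \cite{all,ngue}), yields $\hat v\in L^2(\Omega;H^1_\#(Y)/\mathbb{R})$ with $\xi-\nabla v=\nabla_y\hat v$, which is exactly (\ref{t4e1}). Everything else (the weak limits, the independence of $y$, the membership $v\in H^1(\Omega)$ or $H^1_0(\Omega)$) is essentially routine once the test functions above have been chosen.
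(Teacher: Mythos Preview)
The paper does not actually prove Theorem~\ref{t4}: it is stated in Section~\ref{sec2} as a recalled compactness result from the two-scale convergence literature, with the reader referred to \cite{all,lnw,ngue} for details. Your argument is correct and is precisely the classical proof due to Nguetseng and Allaire---extract two-scale limits of $v^\varepsilon$ and $\nabla v^\varepsilon$, kill the $y$-dependence of the first by testing against $\varepsilon\Psi(x,x/\varepsilon)$, then characterize $\xi-\nabla v$ as a $y$-gradient by testing against $y$-solenoidal fields and invoking the periodic De Rham lemma---so there is nothing to compare beyond noting that you have supplied what the paper omits.
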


In (\ref{t4e1}) and in the sequel the subscript $y$ on a differential
operator as in $\nabla _{y}$ indicates that the differentiation acts only on
$y$.

\begin{theorem}
operators act only on those variables
\end{theorem}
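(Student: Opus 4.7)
The statement as printed is not a substantive mathematical claim but a notational convention: it records that whenever a differential symbol carries a subscript indicating a variable (as in $\nabla_y$, $\mathrm{div}_y$, $e_y$, $\partial_y$), the operator differentiates with respect to that variable alone and treats the remaining arguments as parameters. Accordingly, no proof in the analytic sense is required; what is required is to make precise what the symbols mean so that the convention is unambiguous whenever it is invoked in later computations (notably in the cell problems (\ref{e41})--(\ref{e43}) and in the two-scale identity $\nabla v^{\varepsilon} \overset{2-s}{\rightharpoonup} \nabla v + \nabla_y \hat v$ of Theorem \ref{t4}).

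My plan is therefore to justify the convention by \emph{definition}. First, I would fix a generic smooth function $\varphi = \varphi(x,y)$ on $\Omega \times Y$ and declare, by definition, that $\nabla_y \varphi(x,y) := \bigl(\partial\varphi/\partial y_i(x,y)\bigr)_{1\le i\le 3}$, with $x$ held fixed, and analogously $\mathrm{div}_y$, $e_y$, $\partial_{y_i}$. Next, I would extend the same convention to functions in the Bochner spaces $L^2(\Omega; H^1_{\#}(Y))$ and $L^2(\Omega; H^1_{\#}(Y_m))$ appearing in the cell problems, using the canonical isomorphism between such Bochner spaces and distributions on $\Omega \times Y$ that are periodic in $y$; the $y$-derivatives are then the distributional derivatives in the $y$-slot, which coincide almost everywhere with the pointwise ones for smooth representatives.

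The only point at which something beyond unpacking a definition is needed is consistency: one must check that under the $\varepsilon$-scaling $y = x/\varepsilon$ the chain rule gives $\nabla_x\bigl(\varphi(x,x/\varepsilon)\bigr) = (\nabla_x \varphi)(x,x/\varepsilon) + \varepsilon^{-1}(\nabla_y \varphi)(x,x/\varepsilon)$, which is exactly the splitting that motivates the two-scale limit in Theorem \ref{t4}. This identity is an immediate consequence of the ordinary chain rule, so no real obstacle appears. In summary, the "proof" reduces to stating the definition of a subscripted differential operator and observing that the chain rule then produces the scaling relation used throughout Section \ref{dhm}; the main (and only) care needed is uniformity of the convention across the various function spaces introduced in the paper.
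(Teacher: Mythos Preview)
Your diagnosis is correct: the ``theorem'' is not a mathematical assertion at all but an editorial artifact. In the paper it appears immediately after the sentence following Theorem~\ref{t4}, which already explains that ``the subscript $y$ on a differential operator as in $\nabla_y$ indicates that the differentiation acts only on~$y$''; the fragment ``operators act only on those variables'' is evidently a stray continuation of that sentence that was accidentally wrapped in a \texttt{theorem} environment. The paper supplies no proof whatsoever, because there is nothing to prove.

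Your proposal is therefore correct in substance and, if anything, more careful than necessary: spelling out the definition of $\nabla_y$, $\mathrm{div}_y$, $e_y$ on $L^2(\Omega;H^1_\#(Y))$ and checking the chain-rule identity $\nabla_x\bigl(\varphi(x,x/\varepsilon)\bigr)=(\nabla_x\varphi)(x,x/\varepsilon)+\varepsilon^{-1}(\nabla_y\varphi)(x,x/\varepsilon)$ is a sound way to make the convention precise, and it is exactly the identity used repeatedly in Section~\ref{dhm}. There is no gap in your reasoning; the only comment is that the paper treats this purely as a one-line notational remark, so your discussion of Bochner-space extensions and distributional consistency, while correct, goes well beyond what the source intends.
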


We now extend the notion of two-scale convergence to periodic surfaces \cite%
{adhs, neus}:

\begin{definition}
\label{d2}A sequence $(w^{\varepsilon })$ in $L^{2}(\Sigma ^{\varepsilon })$
two-scale converges to $w_{0}(x,y)\in L^{2}\left( \Omega \times \Sigma
\right) $ if for any $q\in \mathcal{D}\left( \bar{\Omega};\mathcal{C}%
_{\#}^{\infty }(\Sigma )\right) $
\begin{equation*}
\lim_{\varepsilon \rightarrow 0}\varepsilon \int_{\Sigma ^{\varepsilon
}}w^{\varepsilon }(x)q(x,\frac{x}{\varepsilon })\hspace{0.03cm}\mathrm{d}%
s^{\varepsilon }\left( x\right) =\int_{\Omega \times \Sigma }w_{0}(x,y)q(x,y)%
\hspace{0.03cm}\mathrm{d}x\mathrm{d}s\left( y\right) .
\end{equation*}
\end{definition}

We state the following compactness result:

\begin{theorem}
\label{t5}Let $(w^{\varepsilon })$ be a sequence in $L^{2}(\Sigma
^{\varepsilon })$ such that
\begin{equation*}
\sqrt{\varepsilon }\int_{\Sigma ^{\varepsilon }}\left\vert w^{\varepsilon
}(x)\right\vert ^{2}\hspace{0.03cm}\mathrm{d}s^{\varepsilon }\left( x\right)
\leq C.
\end{equation*}%
Then, up to a subsequence, there exists $w_{0}(x,y)\in L^{2}\left( \Omega
\times \Sigma \right) $ such that $(w^{\varepsilon })$ two-scale converges
in the sense of Definition \ref{d2} to $w_{0}(x,y)\in L^{2}\left( \Omega
\times \Sigma \right) $.
\end{theorem}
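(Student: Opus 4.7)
The plan is to prove this surface two-scale compactness result by following the standard Riesz-representation strategy that underlies the interior compactness Theorem \ref{t3}, but with the periodic surface measure $\mathrm{d}s^{\varepsilon}$ replacing the Lebesgue measure. For each admissible test function $q\in \mathcal{D}(\bar{\Omega};\mathcal{C}_{\#}^{\infty}(\Sigma))$ I would introduce the linear functional
$$
\mu^{\varepsilon}(q):=\varepsilon \int_{\Sigma^{\varepsilon}} w^{\varepsilon}(x)\, q\!\left(x,\tfrac{x}{\varepsilon}\right)\mathrm{d}s^{\varepsilon}(x),
$$
and first establish that $\mu^{\varepsilon}$ is uniformly bounded on a dense subspace of $L^{2}(\Omega\times \Sigma)$.

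The first step is Cauchy--Schwarz, which gives $|\mu^{\varepsilon}(q)|\leq \bigl(\varepsilon\!\int_{\Sigma^{\varepsilon}}|w^{\varepsilon}|^{2}\mathrm{d}s^{\varepsilon}\bigr)^{1/2}\bigl(\varepsilon\!\int_{\Sigma^{\varepsilon}}|q(x,x/\varepsilon)|^{2}\mathrm{d}s^{\varepsilon}\bigr)^{1/2}$. The hypothesis controls the first factor by a constant (up to interpreting the stated bound in the natural scaling of the surface measure $|\Sigma^{\varepsilon}|=\mathcal{O}(\varepsilon^{-1})$). For the second factor one needs the surface analogue of Lemma \ref{l1}, namely
$$
\varepsilon \int_{\Sigma^{\varepsilon}}\bigl|q(x,\tfrac{x}{\varepsilon})\bigr|^{2}\mathrm{d}s^{\varepsilon}(x)\;\longrightarrow\; \int_{\Omega\times \Sigma}|q(x,y)|^{2}\,\mathrm{d}x\,\mathrm{d}s(y).
$$
I would prove this scaling identity by partitioning $\Omega$ into the $\varepsilon$-homothetic cells $\varepsilon(k+Y)$, $k\in \mathbb{Z}^{3}$, parametrising $\Sigma^{\varepsilon}$ cell by cell via $y=x/\varepsilon$ (so that $\mathrm{d}s^{\varepsilon}=\varepsilon^{2}\mathrm{d}s(y)$), and exploiting the uniform continuity of $q$ in the slow variable to identify the resulting expression as a Riemann sum converging to the stated integral. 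Together with the hypothesis this yields $|\mu^{\varepsilon}(q)|\leq C\|q\|_{L^{2}(\Omega\times \Sigma)}$ uniformly in $\varepsilon$.

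Next I would use the separability of $\mathcal{D}(\bar{\Omega};\mathcal{C}_{\#}^{\infty}(\Sigma))$ in $L^{2}(\Omega\times \Sigma)$: fix a countable dense family $(q_{n})$, and by a diagonal extraction pass to a subsequence along which $\mu^{\varepsilon}(q_{n})$ converges to some limit $\mu(q_{n})$ for every $n$. The uniform bound above plus a standard $3\varepsilon$-style density argument promotes this to convergence $\mu^{\varepsilon}(q)\to \mu(q)$ for every $q\in L^{2}(\Omega\times \Sigma)$, with $\mu$ a continuous linear form of norm at most $\sqrt{C}$ on $L^{2}(\Omega\times \Sigma)$.

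The last step is the Riesz representation theorem, which produces a unique $w_{0}\in L^{2}(\Omega\times \Sigma)$ such that $\mu(q)=\int_{\Omega\times \Sigma} w_{0}(x,y)\,q(x,y)\,\mathrm{d}x\,\mathrm{d}s(y)$ for all $q$; this is precisely the two-scale convergence on surfaces of Definition \ref{d2}. The main obstacle is the surface admissible-test-function limit displayed above: passing the $\varepsilon$-factor through the surface measure and identifying the limit rigorously requires the cellwise Riemann-sum argument and a careful treatment of cells cut by $\partial\Omega$ (which contribute negligibly since $\bar{\Omega}_{2}^{\varepsilon}\subset\Omega$). Once that identity is established, the remainder of the argument is parallel to the interior case treated in \cite{all, lnw, ngue}.
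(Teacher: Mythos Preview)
The paper does not supply its own proof of Theorem~\ref{t5}: this is one of the background two-scale convergence results that the paper merely recalls from the literature, with the surface version attributed to \cite{adhs, neus}. Your proposal reconstructs precisely the standard argument found in those references (Cauchy--Schwarz on the surface, the cellwise Riemann-sum identification of $\varepsilon\int_{\Sigma^{\varepsilon}}|q(x,x/\varepsilon)|^{2}\mathrm{d}s^{\varepsilon}$, diagonal extraction on a countable dense family, and Riesz representation), so there is nothing to compare against in the paper itself and your outline is correct in substance.

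One remark: you rightly flag the anomalous $\sqrt{\varepsilon}$ in the hypothesis. As written, $\sqrt{\varepsilon}\int_{\Sigma^{\varepsilon}}|w^{\varepsilon}|^{2}\,\mathrm{d}s^{\varepsilon}\leq C$ is strictly stronger than the natural scaling $\varepsilon\int_{\Sigma^{\varepsilon}}|w^{\varepsilon}|^{2}\,\mathrm{d}s^{\varepsilon}\leq C$ needed for your Cauchy--Schwarz step (indeed it forces the first factor to vanish in the limit). The intended hypothesis in \cite{adhs, neus} is the $\varepsilon$-scaled one, and your parenthetical correctly reads the statement in that spirit; it would be worth stating this explicitly rather than leaving it implicit.
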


\begin{corollary}
\label{c1}Let $v\left( y\right) \in L_{\#}^{2}\left( \Sigma \right) $. Then
for any $q\in H^{1}\left( \Omega \right) $%
\begin{equation*}
\lim_{\varepsilon \rightarrow 0}\int_{\Sigma ^{\varepsilon }}\varepsilon v(%
\frac{x}{\varepsilon })q(x)\hspace{0.03cm}\mathrm{d}s^{\varepsilon }\left(
x\right) =\int_{\Omega \times \Sigma }v(y)q(x)\hspace{0.03cm}\mathrm{d}x%
\mathrm{d}s\left( y\right) .
\end{equation*}
\end{corollary}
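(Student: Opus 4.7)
The plan is a density argument that bootstraps the convergence from smooth data to the full $L^2_\#(\Sigma)\times H^1(\Omega)$ setting by means of scaled periodic trace inequalities. I would proceed in three short steps.

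\textbf{Step 1 (Smooth case).} First fix $v\in \mathcal{C}_\#^\infty(\Sigma)$ and $q\in \mathcal{D}(\bar{\Omega})$. Then $(x,y)\mapsto v(y)q(x)$ lies in $\mathcal{D}(\bar{\Omega};\mathcal{C}_\#^\infty(\Sigma))$, hence is an admissible test function in the sense of Definition \ref{d2}. In this case the required limit
\begin{equation*}
\varepsilon\int_{\Sigma^\varepsilon} v(x/\varepsilon)q(x)\,\mathrm{d}s^\varepsilon(x)\longrightarrow \int_{\Omega\times\Sigma}v(y)q(x)\,\mathrm{d}x\,\mathrm{d}s(y)
\end{equation*}
is the classical scaled surface-integration formula for periodic data: it is obtained by partitioning $\Omega$ into translated cells $\varepsilon(k+Y)$, $k\in\mathbb{Z}^3$, introducing the local variable $y=(x-\varepsilon k)/\varepsilon$ on each cell and using the continuity of $q$ to replace $q(x)$ by $q(\varepsilon k)$ up to an error of order $\varepsilon$.

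\textbf{Step 2 (Uniform trace bounds).} To extend to general data, I would use the two scaled trace inequalities
\begin{equation*}
\varepsilon\int_{\Sigma^\varepsilon}|v(x/\varepsilon)|^{2}\,\mathrm{d}s^\varepsilon(x)\leq C\|v\|_{L_\#^2(\Sigma)}^{2},\qquad \varepsilon\int_{\Sigma^\varepsilon}|q(x)|^{2}\,\mathrm{d}s^\varepsilon(x)\leq C\|q\|_{H^1(\Omega)}^{2},
\end{equation*}
with $C$ independent of $\varepsilon$. The first comes from cell-counting (the number of cells intersecting $\Omega$ is of order $\varepsilon^{-3}$ and each contributes $\varepsilon^{2}\|v\|_{L^2(\Sigma)}^{2}$ after rescaling). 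The second is the standard $\varepsilon$-trace inequality, proved by pulling back to the reference cell $Y$ and applying the usual $H^1(Y)\to L^2(\Sigma)$ trace theorem.

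\textbf{Step 3 (Density and conclusion).} For $v\in L_\#^2(\Sigma)$ and $q\in H^1(\Omega)$, pick $v_n\in \mathcal{C}_\#^\infty(\Sigma)$ and $q_k\in \mathcal{D}(\bar{\Omega})$ with $v_n\to v$ in $L_\#^2(\Sigma)$ and $q_k\to q$ in $H^1(\Omega)$. Writing
\begin{equation*}
\varepsilon\int_{\Sigma^\varepsilon}v(x/\varepsilon)q(x)\,\mathrm{d}s^\varepsilon = \varepsilon\int_{\Sigma^\varepsilon}v_n(x/\varepsilon)q_k(x)\,\mathrm{d}s^\varepsilon + R^\varepsilon_{n,k},
\end{equation*}
a Cauchy--Schwarz splitting of $R^\varepsilon_{n,k}$ together with the two bounds of Step 2 yields $|R^\varepsilon_{n,k}|\leq C(\|v-v_n\|_{L_\#^2(\Sigma)}\|q\|_{H^1}+\|v_n\|_{L_\#^2}\|q-q_k\|_{H^1})$, uniformly in $\varepsilon$. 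The target integral on $\Omega\times\Sigma$ admits the same error bound by ordinary Cauchy--Schwarz. Thus, given $\delta>0$, I first choose $n,k$ to make both error contributions smaller than $\delta$, then apply Step 1 to the smooth pair $(v_n,q_k)$, and finally let $\delta\to 0$. The main obstacle is precisely the scaled trace inequality in Step 2; it is standard (see the references \cite{adhs, neus} already invoked in the paper for surface two-scale convergence) but is the only genuinely non-trivial ingredient beyond the admissible-test-function machinery.
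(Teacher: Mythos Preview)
Your argument is correct and complete. The paper itself states Corollary~\ref{c1} without proof, presenting it as an immediate consequence of the surface two-scale convergence framework (Definition~\ref{d2}, Theorems~\ref{t5}--\ref{t6}) and the references \cite{adhs, neus}; there is therefore nothing to compare against line by line.

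A couple of remarks that may be useful. First, the scaled trace inequality you invoke in Step~2,
\[
\varepsilon\|q\|_{0,\Sigma^\varepsilon}^2\leq C\big(\varepsilon^2\|\nabla q\|_{0,\Omega}^2+\|q\|_{0,\Omega}^2\big),
\]
is not merely standard background: the paper actually proves it later (Lemma following (\ref{e94}), inequality (\ref{e96})) in the corrector section. So your Step~2 anticipates a tool the paper develops afterwards; you could cite that lemma directly rather than re-deriving it. Second, the paper's implicit route to the corollary is slightly different in spirit: one applies Theorem~\ref{t6} to the constant sequence $w^\varepsilon=q\in H^1(\Omega)$, identifies the two-scale limit as $w_0(x,y)=q(x)$, and then extends the class of admissible surface test functions from $\mathcal{D}(\bar\Omega;\mathcal{C}_\#^\infty(Y))$ to $v(y)\in L_\#^2(\Sigma)$ by density. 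Your approach instead fixes both $v$ and $q$ and runs the density argument symmetrically, which is cleaner and avoids invoking Theorem~\ref{t6} (stated after the corollary in the paper's ordering). Either way the substance is the same: the smooth case plus the two uniform trace bounds.
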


\begin{theorem}
\label{t6}Let $(w^{\varepsilon })$ be a sequence of functions in $%
H^{1}(\Omega )$ such that
\begin{equation*}
\left\Vert w^{\varepsilon }\right\Vert _{L^{2}\left( \Omega \right)
}+\varepsilon \left\Vert \nabla w^{\varepsilon }\right\Vert _{L^{2}\left(
\Omega \right) ^{N}}\leq C\text{.}
\end{equation*}%
Then, there exist a subsequence of $\left( w^{\varepsilon }\right) $, still
denoted by $\left( w^{\varepsilon }\right) $, and $w_{0}(x,y)\in L^{2}\left(
\Omega ;H_{\#}^{1}(Y)\right) $ such that $w^{\varepsilon }\overset{2-s}{%
\rightharpoonup }w_{0}$ and $\varepsilon \nabla w^{\varepsilon }\overset{2-s}%
{\rightharpoonup }\nabla _{y}w_{0}$ and for every $q\in \mathcal{D}\left(
\overline{\Omega };\mathcal{C}_{\#}^{\infty }(Y)\right) $ we have
\begin{equation*}
\lim_{\varepsilon \rightarrow 0}\varepsilon \int_{\Sigma ^{\varepsilon
}}w^{\varepsilon }\left( x\right) q^{\varepsilon }\left( x\right) \hspace{%
0.03cm}\mathrm{d}s^{\varepsilon }\left( x\right) =\int_{\Omega \times \Sigma
}w_{0}\left( x,y\right) q\left( x,y\right) \hspace{0.03cm}\mathrm{d}x\mathrm{%
d}s\left( y\right) .
\end{equation*}
\end{theorem}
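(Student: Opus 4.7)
The plan is to split the statement into three compactness tasks and one identification, and carry them out in order. First I would use the uniform bounds $\|w^{\varepsilon}\|_{L^{2}(\Omega)}\leq C$ and $\|\varepsilon\nabla w^{\varepsilon}\|_{L^{2}(\Omega)^{3}}\leq C$ together with Theorem \ref{t3} to extract, along a common subsequence, $w^{\varepsilon}\overset{2-s}{\rightharpoonup}w_{0}\in L^{2}(\Omega\times Y)$ and $\varepsilon\nabla w^{\varepsilon}\overset{2-s}{\rightharpoonup}\xi_{0}\in L^{2}(\Omega\times Y)^{3}$. To show that $\xi_{0}=\nabla_{y}w_{0}$ and $w_{0}\in L^{2}(\Omega;H^{1}_{\#}(Y))$, I would test with $\varphi(x)\psi(x/\varepsilon)$, where $\varphi\in\mathcal{D}(\Omega)$ and $\psi\in\mathcal{C}^{\infty}_{\#}(Y)^{3}$. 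Since $\mathrm{div}[\varphi\psi(x/\varepsilon)]=\nabla\varphi\cdot\psi(x/\varepsilon)+\varepsilon^{-1}\varphi(\mathrm{div}_{y}\psi)(x/\varepsilon)$, integration by parts gives
\[
\int_{\Omega}\varepsilon\nabla w^{\varepsilon}\cdot\varphi\psi(x/\varepsilon)\,\mathrm{d}x=-\varepsilon\int_{\Omega}w^{\varepsilon}\nabla\varphi\cdot\psi(x/\varepsilon)\,\mathrm{d}x-\int_{\Omega}w^{\varepsilon}\varphi(\mathrm{div}_{y}\psi)(x/\varepsilon)\,\mathrm{d}x.
\]
Passing $\varepsilon\to 0$ via Definition \ref{d1}, the first right-hand term vanishes because of the extra $\varepsilon$ factor, yielding $\int_{\Omega\times Y}\xi_{0}\cdot\varphi\psi\,\mathrm{d}x\mathrm{d}y=-\int_{\Omega\times Y}w_{0}\varphi\,\mathrm{div}_{y}\psi\,\mathrm{d}x\mathrm{d}y$; hence $\xi_{0}=\nabla_{y}w_{0}$ distributionally in $y$, so $w_{0}(x,\cdot)\in H^{1}_{\#}(Y)$ for a.e.\ $x$.

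For the surface statement I would first derive the scaled trace inequality on the periodic cell, yielding
\[
\varepsilon\int_{\Sigma^{\varepsilon}}|w^{\varepsilon}|^{2}\,\mathrm{d}s^{\varepsilon}(x)\leq C\bigl(\|w^{\varepsilon}\|^{2}_{L^{2}(\Omega)}+\varepsilon^{2}\|\nabla w^{\varepsilon}\|^{2}_{L^{2}(\Omega)^{3}}\bigr)\leq C,
\]
so that, by Theorem \ref{t5} applied along a further subsequence, there exists $\tilde{w}_{0}\in L^{2}(\Omega\times\Sigma)$ such that $w^{\varepsilon}$ surface two-scale converges to $\tilde{w}_{0}$.

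The identification $\tilde{w}_{0}=w_{0}|_{\Sigma}$ is the heart of the argument and the step I expect to be the main obstacle. For an arbitrary smooth vector field $Q\in\mathcal{D}(\Omega;\mathcal{C}^{\infty}_{\#}(Y))^{3}$, Green's formula in $\Omega_{2}^{\varepsilon}$ applied to $w^{\varepsilon}Q(x,x/\varepsilon)$ gives
\[
\varepsilon\int_{\Sigma^{\varepsilon}}w^{\varepsilon}\,Q\cdot\mathbf{n}^{\varepsilon}\,\mathrm{d}s^{\varepsilon}=\int_{\Omega_{2}^{\varepsilon}}\bigl[\varepsilon\nabla w^{\varepsilon}\cdot Q+w^{\varepsilon}(\mathrm{div}_{y}Q)(x,x/\varepsilon)+\varepsilon w^{\varepsilon}(\mathrm{div}_{x}Q)(x,x/\varepsilon)\bigr]\mathrm{d}x.
\]
Passing $\varepsilon\to 0$ using the two-scale limits of $w^{\varepsilon}$ and $\varepsilon\nabla w^{\varepsilon}$ (with $\chi_{2}(x/\varepsilon)$ as an admissible multiplier, cf.\ Remark \ref{r1}) together with the surface two-scale limit of $w^{\varepsilon}|_{\Sigma^{\varepsilon}}$, the right-hand side tends to $\int_{\Omega\times Y_{2}}(\nabla_{y}w_{0}\cdot Q+w_{0}\,\mathrm{div}_{y}Q)\,\mathrm{d}x\mathrm{d}y$, which by Green's formula in $Y_{2}$ equals $\int_{\Omega\times\Sigma}w_{0}|_{\Sigma}\,Q\cdot\nu\,\mathrm{d}x\mathrm{d}s(y)$, while the left-hand side tends to $\int_{\Omega\times\Sigma}\tilde{w}_{0}\,Q\cdot\nu\,\mathrm{d}x\mathrm{d}s(y)$. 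Since the normal traces $Q\cdot\nu$ of such $Q$ form a dense subset of $L^{2}(\Omega\times\Sigma)$, this forces $\tilde{w}_{0}=w_{0}|_{\Sigma}$. The claimed formula for scalar test functions $q\in\mathcal{D}(\overline{\Omega};\mathcal{C}^{\infty}_{\#}(Y))$ is then Definition \ref{d2} applied to $\tilde{w}_{0}=w_{0}|_{\Sigma}$ tested against $q|_{\Sigma}$.
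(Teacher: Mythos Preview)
The paper does not actually prove Theorem~\ref{t6}: it is one of several two-scale compactness results recalled without proof in Section~\ref{sec2}, with the reader referred to \cite{all,lnw,ngue} for the bulk statements and to \cite{adhs,neus} for the surface version. So there is no ``paper's own proof'' to compare against.

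Your argument is a faithful reconstruction of the standard proof found in those references. The extraction of the bulk limits $w_{0}$ and $\xi_{0}$ via Theorem~\ref{t3}, the identification $\xi_{0}=\nabla_{y}w_{0}$ by testing against $\varphi(x)\psi(x/\varepsilon)$, the scaled trace estimate (which is exactly the inequality~(\ref{e96}) proved later in the paper), the surface compactness via Theorem~\ref{t5}, and the identification $\tilde{w}_{0}=w_{0}|_{\Sigma}$ through Green's formula on $\Omega_{2}^{\varepsilon}$ are all correct and constitute precisely the approach of Allaire--Damlamian--Hornung \cite{adhs} and Neuss-Radu \cite{neus}. One small point worth making explicit: when you pass to the limit in $\int_{\Omega_{2}^{\varepsilon}}\varepsilon\nabla w^{\varepsilon}\cdot Q^{\varepsilon}\,\mathrm{d}x$ and the companion terms, the test function carries the factor $\chi_{2}(x/\varepsilon)$, which is discontinuous in $y$; its admissibility is not covered by Lemma~\ref{l1} as stated but follows from the broader class of admissible test functions (e.g.\ $C(\overline{\Omega})\otimes L^{2}_{\#}(Y)$) established in \cite{all}. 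The paper's Remark~\ref{r1} asserts this without justification, so you are on the same footing.
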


\begin{remark}
Notice that two-scale convergence can also handle problems involving a
parameter without affecting the results stated above. Therefore, we shall
use this technique to study the homogenization of our model which involves
the time parameter $t$. For more details, see \cite{clark98}.
\end{remark}

\subsection{Homogenization process}

In this subsection, we shall prove Theorem \ref{t1}. The proof is divided
into Lemmata \ref{l2}-\ref{l8}.

\begin{lemma}
\label{l2} There exists a subsequence of $($\textbf{$u$}$^{\varepsilon
},\theta ^{\varepsilon },p^{\varepsilon })$, still denoted $($\textbf{$u$}$%
^{\varepsilon },$ $\theta ^{\varepsilon },p^{\varepsilon })$, and there
exist
\begin{equation*}
\mathbf{u}\in L_{T}^{\infty }(H_{0}^{1}\left( \Omega \right) ^{3}),\quad
\mathbf{\hat{u}}\in L_{T}^{\infty }\left( L^{2}(\Omega ;H_{\#}^{1}(Y)/%
\mathbb{R})^{3}\right) ,\quad p_{m},\theta _{m}\in L_{T}^{\infty
}(H_{0}^{1}(\Omega ))
\end{equation*}%
and
\begin{equation*}
\hat{p}_{m},\hat{\theta}_{m}\in L^{2}(Q;H_{\#}^{1}(Y_{m})/\mathbb{R})
\end{equation*}%
such that, for a.e. $t\in (0,T)$
\begin{eqnarray}
&&\mathbf{u}^{\varepsilon }(t,x)\overset{2-s}{\rightharpoonup }\mathbf{u}%
(t,x)\mathbf{,}  \label{e52} \\
&&\chi _{m}^{\varepsilon }\left( x\right) p_{m}^{\varepsilon }(t,x)\overset{%
2-s}{\rightharpoonup }\chi _{m}\left( y\right) p_{m\,}(t,x),  \label{e53} \\
&&\chi _{m}^{\varepsilon }\left( x\right) \theta _{m}^{\varepsilon }(t,x)%
\overset{2-s}{\rightharpoonup }\chi _{m}\left( y\right) \theta _{m\,}(t,x),
\label{e54} \\
&&\nabla \mathbf{u}^{\varepsilon }(t,x)\overset{2-s}{\rightharpoonup }\nabla
\mathbf{u}(t,x)+\nabla _{y}\mathbf{\hat{u}}(t,x,y),  \label{e55} \\
&&\chi _{m}^{\varepsilon }\left( x\right) \nabla p_{m}^{\varepsilon }(t,x)%
\overset{2-s}{\rightharpoonup }\chi _{m}\left( y\right) (\nabla
p_{m}(t,x)+\nabla _{y}\hat{p}_{m}(t,x,y)),  \label{e56} \\
&&\chi _{m}^{\varepsilon }\left( x\right) \nabla \theta _{m}^{\varepsilon
}(t,x)\overset{2-s}{\rightharpoonup }\chi _{m}\left( y\right) (\nabla \theta
_{m}(t,x)+\nabla _{y}\hat{\theta}_{m}(t,x,y)).  \label{e57}
\end{eqnarray}%
Moreover, for any $\psi \in \mathcal{D}(Q;\mathcal{C}_{\#}(Y))$ we have
\begin{eqnarray}
&&%
\begin{array}{l}
\lim_{\varepsilon \rightarrow 0}\int_{\Sigma _{T}^{\varepsilon }}\zeta
^{\varepsilon }\left( x\right) (p_{1}^{\varepsilon }\left( t,x\right)
-p_{2}^{\varepsilon }\left( t,x\right) )\psi ^{\varepsilon }\left(
t,x\right) \hspace{0.03cm}\mathrm{d}s^{\varepsilon }\left( x\right) \mathrm{d%
}t \\
\\
=\int_{Q\times \Sigma }\varsigma \left( y\right) (p_{1}\left( t,x\right)
-p_{2}\left( t,x\right) )\psi \left( t,x,y\right) \hspace{0.03cm}\mathrm{d}t%
\mathrm{d}x\mathrm{d}s\left( y\right) ,%
\end{array}
\label{e58} \\
&&  \notag \\
&&%
\begin{array}{l}
\lim_{\varepsilon \rightarrow 0}\int_{\Sigma _{T}^{\varepsilon }}\omega
^{\varepsilon }\left( x\right) (\theta _{1}^{\varepsilon }\left( t,x\right)
-\theta _{2}^{\varepsilon }\left( t,x\right) )\psi ^{\varepsilon }\left(
t,x\right) \hspace{0.03cm}\mathrm{d}s^{\varepsilon }\left( x\right) \mathrm{d%
}t \\
\\
=\int_{Q\times \Sigma }\omega \left( y\right) (\theta _{1}\left( t,x\right)
-\theta _{2}\left( t,x\right) )\psi \left( t,x,y\right) \hspace{0.03cm}%
\mathrm{d}t\mathrm{d}x\mathrm{d}s\left( y\right)%
\end{array}
\label{e59}
\end{eqnarray}%
where we have denoted $\psi ^{\varepsilon }(t,x)=\psi (x,t,x/\varepsilon )$.
\end{lemma}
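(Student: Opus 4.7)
The plan is to extract two-scale convergent subsequences from the uniform estimates (\ref{e40}) and then identify the structure of the limits using the machinery of Section \ref{sec2}. First, for the displacement, the bound $\|\mathbf{u}^{\varepsilon}\|_{L_{T}^{\infty}(\mathbf{V})}\leq C$ places $\mathbf{u}^{\varepsilon}$ in a bounded subset of $L_{T}^{\infty}(H_{0}^{1}(\Omega)^{3})$, so a componentwise application of Theorem \ref{t4} (for a.e.\ $t$, together with a standard measurability argument to recover the $t$-dependence) yields a subsequence with limits $\mathbf{u}\in L_{T}^{\infty}(\mathbf{V})$ and $\hat{\mathbf{u}}\in L_{T}^{\infty}(L^{2}(\Omega;H_{\#}^{1}(Y)/\mathbb{R})^{3})$ satisfying (\ref{e52}) and (\ref{e55}).

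For the pressure and temperature on the matrix phase, I would use that $\Omega_{1}^{\varepsilon}$ is a periodic, connected perforated domain (with $\bar{Y}_{2}\subset Y$, so the inclusions are strictly interior to each cell), which admits a uniformly bounded extension operator $\mathcal{P}^{\varepsilon}:H_{\Gamma}^{1}(\Omega_{1}^{\varepsilon})\to H_{0}^{1}(\Omega)$. Applying $\mathcal{P}^{\varepsilon}$ to $p_{1}^{\varepsilon}(t,\cdot)$ and $\theta_{1}^{\varepsilon}(t,\cdot)$ gives sequences uniformly bounded in $L_{T}^{2}(H_{0}^{1}(\Omega))\cap L_{T}^{\infty}(L^{2}(\Omega))$, to which Theorem \ref{t4} applies, yielding $p_{1},\theta_{1}$ and correctors $\hat{p}_{1},\hat{\theta}_{1}\in L^{2}(Q;H_{\#}^{1}(Y_{1})/\mathbb{R})$. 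Multiplication by the $Y$-periodic characteristic function $\chi_{1}^{\varepsilon}$, whose two-scale limit is $\chi_{1}(y)$, gives the weak limits (\ref{e53}), (\ref{e54}), (\ref{e56}), (\ref{e57}) for $m=1$. For the inclusions ($m=2$), the sequences $\chi_{2}^{\varepsilon}p_{2}^{\varepsilon}$ and $\chi_{2}^{\varepsilon}\nabla p_{2}^{\varepsilon}$ are directly bounded in $L^{2}(Q)$ uniformly in $\varepsilon$; Theorem \ref{t3} produces two-scale limits, and testing against functions of the form $\phi(t,x)\psi(x/\varepsilon)$ and $\varepsilon\phi(t,x)\psi(x/\varepsilon)$ with $\psi\in\mathcal{C}_{\#}^{\infty}(Y)$ supported in $\overline{Y}_{2}$ identifies them in the form $\chi_{2}(y)p_{2}(t,x)$ and $\chi_{2}(y)\bigl(\nabla p_{2}+\nabla_{y}\hat{p}_{2}\bigr)$ respectively. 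The same argument applies to $\theta_{2}^{\varepsilon}$.

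For the surface integrals (\ref{e58}) and (\ref{e59}), the scaling $\varsigma^{\varepsilon}(x)=\varepsilon\varsigma(x/\varepsilon)$ and the bound $\varepsilon\int_{\Sigma^{\varepsilon}}|p_{1}^{\varepsilon}-p_{2}^{\varepsilon}|^{2}\,\mathrm{d}s^{\varepsilon}\leq C$, which is contained in the $V^{\varepsilon}$-norm bound of (\ref{e40}), place us exactly in the setting of Theorem \ref{t5}: up to a subsequence, $(p_{1}^{\varepsilon}-p_{2}^{\varepsilon})|_{\Sigma^{\varepsilon}}$ two-scale converges in the surface sense to some $w_{0}(t,x,y)\in L^{2}(Q\times\Sigma)$. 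A standard compatibility computation against test functions of the form $\varphi(t,x)$ and $\varepsilon\varphi(t,x)\psi(x/\varepsilon)$, combined with the identification already obtained for $p_{1}^{\varepsilon}$ and $p_{2}^{\varepsilon}$ in the bulk, forces $w_{0}(t,x,y)=p_{1}(t,x)-p_{2}(t,x)$, independent of $y$. Since $\varsigma$ and $\omega$ are continuous and $Y$-periodic, the products $\varsigma(y)\psi(t,x,y)$ and $\omega(y)\psi(t,x,y)$ are admissible in Definition \ref{d2}, and the two limits (\ref{e58}) and (\ref{e59}) follow by exactly the same argument.

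The main obstacle will be the identification of the macroscopic limit $p_{2}(t,x)$ (and $\theta_{2}$) for the disconnected inclusion phase: a bound on $\chi_{2}^{\varepsilon}\nabla p_{2}^{\varepsilon}$ alone does not in general produce a genuinely macroscopic $H^{1}$-gradient, since each inclusion only sees an $\varepsilon$-scale neighbourhood. The argument must use the Poincar\'e inequality on each (rescaled) inclusion $Y_{2}$ to extract the cell-average, together with the surface-coupling contribution in the $V^{\varepsilon}$-norm to transfer the $L^{2}(\Omega)$ control of $p_{1}^{\varepsilon}$ onto $p_{2}^{\varepsilon}$, before one can meaningfully speak of a limit $p_{2}\in L_{T}^{\infty}(H^{1}(\Omega))$. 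Once this identification is in place, the remaining claims and the convergence of the surface terms are routine consequences of Theorems \ref{t3}--\ref{t6}.
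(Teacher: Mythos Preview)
Your proposal is correct and follows essentially the same route as the paper: apply the a~priori estimates (\ref{e40}) together with the two-scale compactness results of Section~\ref{sec2} (Theorems~\ref{t3}--\ref{t6}) to extract the limits (\ref{e52})--(\ref{e59}). The paper's own proof is in fact a single sentence to that effect, so your write-up is considerably more detailed than what the author provides; in particular, your use of a uniform extension operator for the connected phase and your explicit treatment of the surface limits via Theorem~\ref{t5} make explicit steps that the paper leaves implicit. Your final paragraph raises a genuine subtlety about the disconnected inclusion phase that the paper does not address; note, however, that because $\bar{Y}_{2}\subset Y$ one can also build a uniformly bounded extension $H^{1}(\Omega_{2}^{\varepsilon})\to H^{1}(\Omega)$ cell by cell (extend on the reference inclusion and cut off inside $Y$), after which Theorem~\ref{t4} applies directly and yields a macroscopic $p_{2}\in H^{1}(\Omega)$ without invoking the surface coupling---so the ``obstacle'' you flag, while real, has a resolution parallel to the one you used for $m=1$.
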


\begin{proof}
The two-scale limits (\ref{e52})-(\ref{e59}) are a straightforward
application of the a priori estimates (\ref{e40}) and the compactness
Theorems \ref{t3}, \ref{t4}, \ref{t5} and \ref{t6}.
\end{proof}

\begin{lemma}
\label{l3}The corrector displacement $\mathbf{\hat{u}}$ can be written as:
\begin{equation}
\mathbf{\hat{u}}(t,x,y)=\mathbf{w}^{ij}(y)e_{ij}(\mathbf{u})(t,x)\text{\ for
a.e. }(t,x,y)\in Q\times Y  \label{e60}
\end{equation}%
where $\mathbf{w}^{ij}\in (H_{\#}^{1}(Y)/\mathbb{R})^{3}$ is the solution to
the microscopic system (\ref{e41}).
\end{lemma}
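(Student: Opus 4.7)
The plan is a standard two-scale test-function argument: insert an $\varepsilon$-scaled, fast-oscillating test function in the weak momentum equation (\ref{e36}), pass to the two-scale limit, and read off a cell problem whose solution factorizes as $e_{ij}(\mathbf{u})\,\mathbf{w}^{ij}(y)$.

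First I would fix $t \in (0,T)$ and choose $\mathbf{v}^\varepsilon(x) := \varepsilon \boldsymbol{\phi}(x, x/\varepsilon)$ with $\boldsymbol{\phi} \in \mathcal{D}(\Omega; \mathcal{C}_\#^\infty(Y))^3$ (so, in particular, $\boldsymbol{\phi}$ has no jump across $\Sigma^\varepsilon$). Since
\begin{equation*}
e(\mathbf{v}^\varepsilon)(x) = \varepsilon\, e_x(\boldsymbol{\phi})(x, x/\varepsilon) + e_y(\boldsymbol{\phi})(x, x/\varepsilon),
\end{equation*}
only the $y$-component survives in the limit. Using (\ref{e55}) from Lemma \ref{l2}, the elastic term in (\ref{e36}) passes to
\begin{equation*}
\int_{\Omega \times Y} \mathbf{A}(y)\bigl(e(\mathbf{u}) + e_y(\mathbf{\hat{u}})\bigr) : e_y(\boldsymbol{\phi})\,\mathrm{d}x\,\mathrm{d}y.
\end{equation*}
The pressure, temperature and body-force contributions all drop out since $\|\mathbf{v}^\varepsilon\|_{L^2(\Omega)^3} = O(\varepsilon)$ while $\nabla p_m^\varepsilon$, $\nabla \theta_m^\varepsilon$ and $\mathbf{f}^\varepsilon$ are uniformly $L^2$-bounded by (\ref{e40}); no $\Sigma^\varepsilon$ surface contribution appears either, since $\boldsymbol{\phi}$ has no jump.

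Next, by density and localization in $x$ (choosing $\boldsymbol{\phi}(x,y) = \psi(x)\boldsymbol{\varphi}(y)$), I would recover the cell variational identity
\begin{equation*}
\int_Y \mathbf{A}(y)\bigl(e(\mathbf{u})(t,x) + e_y(\mathbf{\hat{u}})(t,x,\cdot)\bigr) : e_y(\boldsymbol{\varphi})\,\mathrm{d}y = 0
\end{equation*}
for a.e.\ $(t,x) \in Q$ and every $\boldsymbol{\varphi} \in (H_\#^1(Y)/\mathbb{R})^3$. This equation is linear in the parameter $e_{ij}(\mathbf{u})(t,x)$, and since $e_y(\mathbf{d}^{ij}) = \tfrac12(\delta_{ik}\delta_{jl} + \delta_{il}\delta_{jk})$ is exactly the symmetric unit tensor of the $(i,j)$-index, the ansatz $\mathbf{\hat{u}}(t,x,y) = e_{ij}(\mathbf{u})(t,x)\,\mathbf{w}^{ij}(y)$ reduces the cell equation to precisely the weak form of (\ref{e41}). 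Uniqueness of $\mathbf{w}^{ij}$ (well-posedness being recorded in the remark preceding this lemma) then yields the representation (\ref{e60}).

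The main point worth double-checking is the interface behaviour on $\Sigma$. Since $\mathbf{u}^\varepsilon$ is continuous across $\Sigma^\varepsilon$ by (\ref{e32a}), the two-scale limit forces $\mathbf{\hat{u}}$ to be single-valued across $\Sigma$, which matches the ansatz space for $\mathbf{w}^{ij}$ imposed in (\ref{e41}); the transmission condition (\ref{e32b}) on the normal stress is encoded weakly in the variational cell identity through the fact that $H^1_\#(Y)^3$ test functions do not jump. Apart from this bookkeeping, the argument is the textbook Allaire--Nguetseng scale-separation for linear elasticity.
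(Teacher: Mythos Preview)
Your proposal is correct and follows essentially the same route as the paper: insert the oscillating test function $\varepsilon\boldsymbol{\phi}(x,x/\varepsilon)$ into (\ref{e36}), use (\ref{e55}) to pass the elastic term to its two-scale limit, observe that the $\varepsilon$-prefactor kills the pressure, temperature, and force contributions via the uniform bounds (\ref{e40}), and then read off the cell problem (\ref{e64}) and its factorized solution. Your added remarks on localization in $x$ and on the interface continuity of $\mathbf{\hat u}$ across $\Sigma$ are useful clarifications that the paper leaves implicit.
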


\begin{proof}
We choose adequate test functions in (\ref{e36}): Let
\begin{equation*}
\mathbf{v}(x):=\mathbf{v}^{\varepsilon }(x)=\varepsilon \mathbf{\hat{v}}(x,%
\dfrac{x}{\varepsilon })
\end{equation*}%
where $\mathbf{\hat{v}}\in \mathcal{D}(\Omega ;\mathcal{C}_{\#}^{\infty
}(Y))^{3}$. Then, we have for a.e. $t\in \left( 0,T\right) $
\begin{equation}
\begin{array}{l}
\int_{\Omega }\mathbf{A}\left( \tfrac{x}{\varepsilon }\right) e(\mathbf{u}%
^{\varepsilon })\left( t,x\right) \left\{ \varepsilon e_{x}\left( \mathbf{%
\hat{v}}\right) (x,\dfrac{x}{\varepsilon })+e_{y}\left( \mathbf{\hat{v}}%
\right) (x,\dfrac{x}{\varepsilon })\right\} \hspace{0.03cm}\mathrm{d}x \\
\\
+\varepsilon \int_{\Omega _{1}^{\varepsilon }}\left( \beta _{1}\nabla
p_{1}^{\varepsilon }\left( t,x\right) +\gamma _{1}\nabla \theta
_{1}^{\varepsilon }\left( t,x\right) \right) \mathbf{\hat{v}}(x,\dfrac{x}{%
\varepsilon })\hspace{0.03cm}\mathrm{d}x \\
\\
+\varepsilon \int_{\Omega _{2}^{\varepsilon }}\left( \beta _{2}\nabla
p_{2}^{\varepsilon }\left( t,x\right) +\gamma _{2}\nabla \theta
_{2}^{\varepsilon }\left( t,x\right) \right) \mathbf{\hat{v}}(x,\dfrac{x}{%
\varepsilon })\hspace{0.03cm}\mathrm{d}x\  \\
\\
=\varepsilon \int_{\Omega _{1}^{\varepsilon }}\mathbf{f}_{1}\left( x\right)
\mathbf{\hat{v}}(x,\dfrac{x}{\varepsilon })\hspace{0.03cm}\mathrm{d}%
x+\varepsilon \int_{\Omega _{2}^{\varepsilon }}\mathbf{f}_{2}\left( x\right)
\mathbf{\hat{v}}(x,\dfrac{x}{\varepsilon })\hspace{0.03cm}\mathrm{d}x.%
\end{array}
\label{e61}
\end{equation}%
In view of (\ref{e55}), we pass to the limit in the first term of the l.h.s.
of (\ref{e61}) to get for a.e. $t\in \left( 0,T\right) $
\begin{eqnarray}
&&\lim_{\varepsilon \rightarrow 0}\int_{\Omega }\mathbf{A}^{\varepsilon
}\left( x\right) e(\mathbf{u}^{\varepsilon })\left( t,x\right) \left\{
\varepsilon e_{x}\left( \mathbf{\hat{v}}\right) (x,\dfrac{x}{\varepsilon }%
)+e_{y}\left( \mathbf{\hat{v}}\right) (x,\dfrac{x}{\varepsilon })\right\}
\hspace{0.03cm}\mathrm{d}x  \notag \\
&=&\int_{\Omega \times Y}\mathbf{A}\left( y\right) \left( e(\mathbf{u}%
)\left( t,x\right) +e_{y}\left( \mathbf{\hat{u}}\right) (t,x,y)\right)
e_{y}\left( \mathbf{\hat{v}}\right) (x,y)\hspace{0.03cm}\mathrm{d}x\mathrm{d}%
y.  \label{e62}
\end{eqnarray}%
Next, since
\begin{equation*}
\left\vert \varepsilon \int_{\Omega _{m}^{\varepsilon }}\beta _{m}\nabla
p_{m}^{\varepsilon }\left( t,x\right) \mathbf{\hat{v}}(x,\dfrac{x}{%
\varepsilon })\hspace{0.03cm}\mathrm{d}x\right\vert \leq \varepsilon \beta
_{m}\left\Vert \mathbf{\hat{v}}(x,\dfrac{x}{\varepsilon }\right\Vert
_{L^{2}\left( \Omega \right) }\left\Vert \nabla p_{m}^{\varepsilon
}\right\Vert _{L^{2}\left( \Omega _{m}^{\varepsilon }\right) },
\end{equation*}%
\begin{equation*}
\left\vert \varepsilon \int_{\Omega _{m}^{\varepsilon }}\gamma _{m}\nabla
\theta _{m}^{\varepsilon }\left( t,x\right) \mathbf{\hat{v}}(x,\dfrac{x}{%
\varepsilon })\hspace{0.03cm}\mathrm{d}x\right\vert \leq \varepsilon \gamma
_{m}\left\Vert \mathbf{\hat{v}}(x,\dfrac{x}{\varepsilon }\right\Vert
_{L^{2}\left( \Omega \right) }\left\Vert \nabla \theta _{m}^{\varepsilon
}\right\Vert _{L^{2}\left( \Omega _{m}^{\varepsilon }\right) }
\end{equation*}%
and taking into account (\ref{e40}) together with Lemma \ref{l1}, we see
that for a.e. $t\in \left( 0,T\right) $
\begin{equation*}
\lim_{\varepsilon \rightarrow 0}\varepsilon \int_{\Omega _{m}^{\varepsilon
}}\left( \beta _{m}\nabla p_{m}^{\varepsilon }\left( t,x\right) +\gamma
_{m}\nabla \theta _{m}^{\varepsilon }\left( t,x\right) \right) \mathbf{\hat{v%
}}(x,\dfrac{x}{\varepsilon })\hspace{0.03cm}\mathrm{d}x=0.
\end{equation*}%
In the same way, letting $\varepsilon \rightarrow 0$ in the r.h.s. of (\ref%
{e61}) we obtain:
\begin{equation}
\lim_{\varepsilon \rightarrow 0}\varepsilon \left( \int_{\Omega
_{1}^{\varepsilon }}\mathbf{f}_{1}\left( x\right) \mathbf{\hat{v}}(x,\dfrac{x%
}{\varepsilon })\hspace{0.03cm}\mathrm{d}x+\int_{\Omega _{2}^{\varepsilon }}%
\mathbf{f}_{2}\left( x\right) \mathbf{\hat{v}}(x,\dfrac{x}{\varepsilon })%
\hspace{0.03cm}\mathrm{d}x\right) =0.  \label{e63}
\end{equation}%
Collecting (\ref{e62}), (\ref{e63}) and passing to the limit in (\ref{e61})
yield for a.e. $t\in \left( 0,T\right) $
\begin{equation*}
\int_{\Omega \times Y}\mathbf{A}\left( y\right) \left( e(\mathbf{u})\left(
t,x\right) +e_{y}\left( \mathbf{\hat{u}}\right) (t,x,y)\right) e_{y}\left(
\mathbf{\hat{v}}\right) (x,y)\hspace{0.03cm}\mathrm{d}x=0
\end{equation*}%
which gives after an integration by parts the following boundary value
problem:
\begin{equation}
\left\{
\begin{array}{ll}
-\mathrm{div}_{y}\left\{ \mathbf{A}\left( y\right) \left( e(\mathbf{u}%
)\left( t,x\right) +e_{y}\left( \mathbf{\hat{u}}\right) (t,x,y)\right)
\right\} =0 & \text{ a.e. in }Q\times Y\text{,} \\
&  \\
y\longmapsto \mathbf{\hat{u}}(t,x,y) & \text{ is }Y-\text{periodic.}%
\end{array}%
\right.  \label{e64}
\end{equation}%
According to the linearity of the system (\ref{e64}), we see that $\mathbf{%
\hat{u}}$ can be written in terms of $\mathbf{u}$ through the following
scale separation expression:%
\begin{equation*}
\mathbf{\hat{u}}(t,x,y)=e_{ij}(\mathbf{u})(t,x)\mathbf{w}^{ij}(y),\ \ \
\text{a.e. }(t,x,y)\in Q\times Y
\end{equation*}%
where $\mathbf{w}^{ij}\in (H_{\#}^{1}(Y)/\mathbb{R})^{3}$ is the solution to
the microscopic system defined by (\ref{e41}), see for instance \cite[page 15%
]{blp}. Hence (\ref{e60}) is proved.
\end{proof}

\begin{lemma}
\label{l4}The corrector pressure $\hat{p}_{m}$ satisfies
\begin{equation}
\hat{p}_{m}(t,x,y)=\pi _{m}^{i}(y)\frac{\partial p_{m}}{\partial x_{i}}%
(t,x),\ \text{a.e. }(t,x,y)\in Q\times Y_{m}  \label{e65}
\end{equation}%
where $\pi _{m}^{i}(y)$ $i=1,2,3,$ $m=1,2$ are defined by (\ref{e42}).
\end{lemma}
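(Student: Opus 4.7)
The plan is to mirror the strategy of Lemma~\ref{l3}: we substitute into the pressure weak equation~\eqref{e37} a test pair localized in phase $m$ and oscillating at the microscale, then pass to the two-scale limit to extract a cell problem whose solution, by linearity, decomposes as claimed. Fix $m\in\{1,2\}$, pick $\hat{q}_{m}\in\mathcal{D}(\Omega;\mathcal{C}_{\#}^{\infty}(Y_{m}))$ and $\varphi\in\mathcal{D}(0,T)$, and set $q_{3-m}\equiv 0$, $q_{m}(x):=\varepsilon\,\hat{q}_{m}(x,x/\varepsilon)$, so that $q=(q_{1},q_{2})\in V^{\varepsilon}$ (the compact support of $\hat{q}_{m}$ in $\Omega$ accommodates the Dirichlet constraint when $m=1$). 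I would then test~\eqref{e37} against such $q$, multiply by $\varphi(t)$, and integrate over $(0,T)$.

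Next I would track the scaling of each resulting term. For the duality pairing $\langle\partial_{t}(\phi_{m}p_{m}^{\varepsilon}+\beta_{m}\mathrm{div}\,\mathbf{u}^{\varepsilon}+\alpha_{m}\theta_{m}^{\varepsilon}),q_{m}\rangle$, an integration by parts in $t$ moves the derivative onto $\varphi$; what remains is $\varepsilon$ times the integral of a uniformly $L_{T}^{\infty}(L^{2})$-bounded quantity against $\hat{q}_{m}(\cdot,\cdot/\varepsilon)$, hence it is $O(\varepsilon)$ by~\eqref{e40} and Lemma~\ref{l1}. The source integral is similarly $O(\varepsilon)$. In the diffusion term I would decompose $\nabla q_{m}=\varepsilon(\nabla_{x}\hat{q}_{m})(x,x/\varepsilon)+(\nabla_{y}\hat{q}_{m})(x,x/\varepsilon)$: the first summand vanishes in the limit, while the second, paired with the two-scale limit~\eqref{e56} of $\chi_{m}^{\varepsilon}\nabla p_{m}^{\varepsilon}$, produces
\begin{equation*}
\int_{0}^{T}\varphi(t)\int_{\Omega\times Y_{m}}\kappa_{m}\bigl(\nabla p_{m}+\nabla_{y}\hat{p}_{m}\bigr)\cdot\nabla_{y}\hat{q}_{m}\,\mathrm{d}x\,\mathrm{d}y\,\mathrm{d}t.
\end{equation*}
Finally, the Deresiewicz--Skalak interface integral carries the prefactor $\varsigma^{\varepsilon}q_{m}=\varepsilon^{2}\varsigma(\cdot/\varepsilon)\hat{q}_{m}(\cdot,\cdot/\varepsilon)$; the extra power of $\varepsilon$ beyond the natural scaling that is exploited in~\eqref{e58} makes it vanish.

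Passing $\varepsilon\to 0$ and using the arbitrariness of $\varphi$ and $\hat{q}_{m}$, I would deduce that for a.e. $t\in(0,T)$,
\begin{equation*}
\int_{\Omega\times Y_{m}}\kappa_{m}\bigl(\nabla p_{m}(t,x)+\nabla_{y}\hat{p}_{m}(t,x,y)\bigr)\cdot\nabla_{y}\hat{q}_{m}(x,y)\,\mathrm{d}x\,\mathrm{d}y=0.
\end{equation*}
An integration by parts in $y$, combined with $Y$-periodicity and the density of test functions, yields the elliptic cell problem $-\mathrm{div}_{y}(\kappa_{m}(\nabla p_{m}+\nabla_{y}\hat{p}_{m}))=0$ in $\Omega\times Y_{m}$ with the matching periodic/flux conditions on $\Sigma$. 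This is precisely~\eqref{e42} with right-hand forcing $\frac{\partial p_{m}}{\partial x_{i}}\,e^{i}$; by linearity in $\nabla p_{m}$ and uniqueness (in $H_{\#}^{1}(Y_{m})/\mathbb{R}$) of the cell solution, I conclude $\hat{p}_{m}(t,x,y)=\pi_{m}^{i}(y)\frac{\partial p_{m}}{\partial x_{i}}(t,x)$.

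The main obstacle I anticipate is the handling of the time-derivative term, which lives in $L_{T}^{2}(V_{m}^{\varepsilon\,*})$ rather than in an ambient Lebesgue space: making the integration by parts in $t$ legitimate and controlling the resulting duality bracket uniformly in $\varepsilon$ is standard but must be executed carefully, by comparing $\|\hat{q}_{m}(\cdot,\cdot/\varepsilon)\|_{V_{m}^{\varepsilon}}$ with a quantity bounded independently of $\varepsilon$. Everything else is bookkeeping built from Lemmata~\ref{l1}--\ref{l2} and the surface convergence of Theorem~\ref{t5}.
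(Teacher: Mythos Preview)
Your proposal is correct and follows essentially the same approach as the paper: test \eqref{e37} with $\varepsilon$-scaled oscillating functions, show that the time-derivative, source, and interface terms are $O(\varepsilon)$ (the paper groups them into a bounded remainder $R_{\varepsilon}$ and observes $\varepsilon R_{\varepsilon}\to 0$), use \eqref{e56} on the diffusion term to obtain the cell problem \eqref{e71}, and conclude by linearity. The only cosmetic differences are that the paper treats both phases simultaneously with test functions in $\mathcal{D}(Q;\mathcal{C}_{\#}^{\infty}(Y))$ rather than fixing one $m$ and using a tensor product $\varphi(t)\hat{q}_{m}(x,y)$.
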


\begin{proof}
Let $\hat{q}_{m}\in \mathcal{D}(Q;\mathcal{C}_{\#}^{\infty }(Y))$. Taking
\begin{equation*}
q_{m}\left( t,x\right) :=q_{m}^{\varepsilon }(t,x)=\varepsilon \hat{\varphi}%
_{m}(t,x,\frac{x}{\varepsilon })
\end{equation*}%
in (\ref{e37}) and integrating with respect to $t\in \left( 0,T\right) $, we
get%
\begin{equation}
\begin{array}{l}
\langle \partial _{t}(\phi _{1}p_{1}^{\varepsilon }+\beta _{1}\mathrm{div}%
\mathbf{u}^{\varepsilon }+\alpha _{1}\theta _{1}^{\varepsilon
}),q_{1}^{\varepsilon }\rangle _{V_{1}^{\varepsilon }{}^{\ast
},V_{1}^{\varepsilon }} \\
\\
+\langle \partial _{t}(\phi _{2}p_{2}^{\varepsilon }+\beta _{2}\mathrm{div}%
\mathbf{u}^{\varepsilon }+\alpha _{2}\theta _{2}^{\varepsilon
}),q_{2}^{\varepsilon }\rangle _{V_{2}^{\varepsilon }{}^{\ast
},V_{2}^{\varepsilon }} \\
\\
+\int_{Q_{1}^{\varepsilon }}\kappa _{1}\nabla p_{1}^{\varepsilon }\left(
t,x\right) \left( \varepsilon \nabla _{x}\hat{q}_{1}(t,x,\frac{x}{%
\varepsilon })+\nabla _{y}\hat{q}_{1}(t,x,\frac{x}{\varepsilon })\right)
\hspace{0.03cm}\mathrm{d}t\mathrm{d}x \\
\\
+\int_{Q_{2}^{\varepsilon }}\kappa _{2}\nabla p_{2}^{\varepsilon }\left(
t,x\right) \left( \varepsilon \nabla _{x}\hat{q}_{2}(t,x,\frac{x}{%
\varepsilon })+\nabla _{y}\hat{q}_{2}(t,x,\frac{x}{\varepsilon })\right)
\hspace{0.03cm}\mathrm{d}t\mathrm{d}x \\
\\
+\varepsilon R_{\varepsilon }=0%
\end{array}
\label{e66}
\end{equation}%
where
\begin{equation}
\begin{array}{rl}
R_{\varepsilon }= & -\int_{Q_{1}^{\varepsilon }}(\phi _{1}p_{1}^{\varepsilon
}+\beta _{1}\mathrm{div}\mathbf{u}^{\varepsilon }+\alpha _{1}\theta
_{1}^{\varepsilon })\partial _{t}\hat{q}_{1}^{\varepsilon }\hspace{0.03cm}%
\mathrm{d}t\mathrm{d}x \\
&  \\
& -\int_{Q_{2}^{\varepsilon }}(\phi _{2}p_{2}^{\varepsilon }+\beta _{2}%
\mathrm{div}\mathbf{u}^{\varepsilon }+\alpha _{2}\theta _{2}^{\varepsilon
})\partial _{t}\hat{q}_{2}^{\varepsilon }\hspace{0.03cm}\mathrm{d}t\mathrm{d}%
x \\
&  \\
& +\int_{\Sigma _{T}^{\varepsilon }}\varepsilon \varsigma \left( \frac{x}{%
\varepsilon }\right) (p_{1}^{\varepsilon }-p_{2}^{\varepsilon })(\hat{q}%
_{1}^{\varepsilon }-\hat{q}_{2}^{\varepsilon })\hspace{0.03cm}\mathrm{d}t%
\mathrm{d}s^{\varepsilon }(x) \\
&  \\
& -\int_{Q_{1}^{\varepsilon }}g_{1}\hat{q}_{1}^{\varepsilon }\hspace{0.03cm}%
\mathrm{d}t\mathrm{d}x-\int_{Q_{2}^{\varepsilon }}g_{2}\hat{q}%
_{2}^{\varepsilon }\hspace{0.03cm}\mathrm{d}t\mathrm{d}x%
\end{array}
\label{e67}
\end{equation}%
and $\hat{q}_{m}^{\varepsilon }\left( t,x\right) :=\hat{q}_{m}\left(
x,t,x/\varepsilon \right) $. From (\ref{e58}) it is easy to see that
\begin{equation}
\left\vert \int_{\Sigma _{T}^{\varepsilon }}\varepsilon \varsigma \left(
\frac{x}{\varepsilon }\right) (p_{1}^{\varepsilon }-p_{2}^{\varepsilon })(%
\hat{q}_{1}^{\varepsilon }-\hat{q}_{2}^{\varepsilon })\hspace{0.03cm}\mathrm{%
d}t\mathrm{d}s^{\varepsilon }(x)\right\vert \leq C.  \label{e68}
\end{equation}%
Furthermore, thanks to Lemma \ref{l1} we have
\begin{eqnarray}
&&\lim_{\varepsilon \rightarrow 0}\left( \int_{Q_{1}^{\varepsilon }}g_{1}%
\hat{q}_{1}^{\varepsilon }\hspace{0.03cm}\mathrm{d}t\mathrm{d}%
x+\int_{Q_{2}^{\varepsilon }}g_{2}\hat{q}_{2}^{\varepsilon }\hspace{0.03cm}%
\mathrm{d}t\mathrm{d}x\right) =  \notag \\
&&\int_{Q\times Y_{1}}g_{1}\hat{q}_{1}\mathrm{d}t\mathrm{d}x\mathrm{d}%
y+\int_{Q\times Y_{2}}g_{2}\hat{q}_{2}\hspace{0.03cm}\mathrm{d}t\mathrm{d}x%
\mathrm{d}y.  \label{e69}
\end{eqnarray}%
By virtue of the uniform estimates (\ref{e40}), the sequences $\left\{
p_{m}^{\varepsilon }\right\} _{\varepsilon },\left\{ \mathrm{div}\mathbf{u}%
^{\varepsilon }\right\} _{\varepsilon }$ and $\left\{ \theta
_{m}^{\varepsilon }\right\} _{\varepsilon }$ are uniformly bounded in $%
L^{2}\left( Q_{m}^{\varepsilon }\right) $ so that
\begin{equation}
\left\vert \int_{Q_{m}^{\varepsilon }}(\phi _{m}p_{m}^{\varepsilon }+\beta
_{m}\mathrm{div}\mathbf{u}^{\varepsilon }+\alpha _{m}\theta
_{m}^{\varepsilon })\partial _{t}\hat{q}_{m}^{\varepsilon }\hspace{0.03cm}%
\mathrm{d}t\mathrm{d}x\right\vert \leq C.  \label{e70}
\end{equation}%
Taking into account (\ref{e68})-(\ref{e70}) we get from (\ref{e67}) that
\begin{equation*}
\lim_{\varepsilon \rightarrow 0}\varepsilon R_{\varepsilon }=0\text{.}
\end{equation*}%
On the other hand passing to the limit in (\ref{e66}) and taking into
account (\ref{e56}) we are led to
\begin{eqnarray}
&&\int_{Q\times Y}\chi _{1}\left( y\right) \kappa _{1}\left( \nabla
p_{1}\left( t,x\right) +\nabla _{y}\hat{p}_{1}\left( t,x\right) \right)
\nabla _{y}\hat{q}_{1}(t,x,y)\hspace{0.03cm}\mathrm{d}t\mathrm{d}x\mathrm{d}y
\notag \\
&&+\int_{Q\times Y}\chi _{2}\left( y\right) \kappa _{2}\left( \nabla
p_{2}\left( t,x\right) +\nabla _{y}\hat{p}_{2}\left( t,x,y\right) \right)
\nabla _{y}\hat{q}_{2}(t,x,y)\hspace{0.03cm}\mathrm{d}t\mathrm{d}x=0,  \notag
\end{eqnarray}%
so that an integration by parts yields:%
\begin{equation}
\left\{
\begin{array}{l}
-\mathrm{div}\left( \kappa _{1}\left( \nabla p_{1}\left( t,x\right) +\nabla
_{y}\hat{p}_{1}\left( t,x,y\right) \right) \right) =0\text{ a.e. in }Q\times
Y_{1}, \\
\\
-\mathrm{div}\left( \kappa _{2}\left( \nabla p_{2}\left( t,x\right) +\nabla
_{y}\hat{p}_{2}\left( t,x,y\right) \right) \right) =0\text{ a.e. in }Q\times
Y_{2}, \\
\\
\left[ \kappa _{m}\left( \nabla p_{m}\left( t,x\right) +\nabla _{y}\hat{p}%
_{m}\left( t,x,y\right) \right) \right] \cdot \nu \left( y\right) \text{
a.e. on }Q\times \Sigma , \\
\\
y\longmapsto \hat{p}_{1},\hat{p}_{2}\text{ are }Y-\text{periodic.}%
\end{array}%
\right.  \label{e71}
\end{equation}%
As in Lemma \ref{l3} and thanks to the linearity of the system (\ref{e71})
we can write that
\begin{equation*}
\hat{p}_{m}(t,x,y)=\pi _{m}^{i}(y)\frac{\partial p_{m}}{\partial x_{i}}%
(t,x)\ \text{a.e. }(t,x,y)\in Q\times Y_{m}
\end{equation*}%
where $\pi _{m}^{i}(y)\in \pi _{m\ i}\in H_{\#}^{1}(Y_{m})/\mathbb{R}$ is
the solution of (\ref{e42}). The Lemma is then proved.
\end{proof}

\begin{lemma}
\label{l5}The corrector temperature $\hat{\theta}_{m}$ is related to the
homogenized temperature $\theta _{m}$ via the linear relation:
\begin{equation}
\hat{\theta}_{m}\left( t,x,y\right) =\vartheta _{m}^{i}(y)\frac{\partial
\theta _{m}}{\partial x_{i}}\left( t,x\right) +C^{te},\ \ \text{a.e. }%
(t,x,y)\in Q\times Y_{m}  \label{e72}
\end{equation}%
where, for $i=1,2,3$, the "micro-temperature" $\vartheta _{m}^{i}\in
H^{1}(Y_{m})/\mathbb{R}$ is the solution of (\ref{e43}).
\end{lemma}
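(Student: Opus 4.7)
The strategy mirrors the proof of Lemma \ref{l4} almost verbatim, with the temperature equation (\ref{e38}) playing the role of the pressure equation (\ref{e37}) and the two-scale convergence (\ref{e57}) replacing (\ref{e56}). Specifically, I would test (\ref{e38}) against oscillating functions of the form $q_m(t,x) := q_m^\varepsilon(t,x) = \varepsilon \hat{q}_m(t,x,x/\varepsilon)$, where $\hat{q}_m \in \mathcal{D}(Q;\mathcal{C}^\infty_\#(Y))$, and integrate in time over $(0,T)$. Moving the time derivative onto $q_m^\varepsilon$ by integration by parts (using the zero initial condition (\ref{e39})) produces an $\varepsilon R_\varepsilon$ remainder completely analogous to (\ref{e67}), but built from $(c_m \theta_m^\varepsilon + \gamma_m \mathrm{div}\,\mathbf{u}^\varepsilon + \alpha_m p_m^\varepsilon) \partial_t \hat{q}_m^\varepsilon$, the interface integral $\int_{\Sigma_T^\varepsilon} \varepsilon \omega(x/\varepsilon)(\theta_1^\varepsilon - \theta_2^\varepsilon)(\hat{q}_1^\varepsilon - \hat{q}_2^\varepsilon)\, ds^\varepsilon\, dt$, and the source contributions $\int_{Q_m^\varepsilon} h_m \hat{q}_m^\varepsilon\, dt\, dx$.

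Next I would check that $\varepsilon R_\varepsilon \to 0$. The a priori bounds (\ref{e40}) give that $\{c_m \theta_m^\varepsilon + \gamma_m \mathrm{div}\,\mathbf{u}^\varepsilon + \alpha_m p_m^\varepsilon\}_\varepsilon$ is uniformly bounded in $L^2(Q_m^\varepsilon)$, while $\partial_t \hat{q}_m^\varepsilon$ is an admissible test function bounded in $L^2(Q)$ by Lemma \ref{l1}, so the volume terms are $\mathcal{O}(1)$ and become $\mathcal{O}(\varepsilon)$ after multiplication. The interface term is bounded thanks to (\ref{e59}) (which requires only the $L_T^2(V^\varepsilon)$ estimate on $\theta^\varepsilon$), again becoming $\mathcal{O}(\varepsilon)$. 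The source terms are obviously $\mathcal{O}(1)$. The gradient contributions, after the factor $\varepsilon$ is absorbed, split into a piece with $\varepsilon \nabla_x \hat{q}_m(\cdot,\cdot,x/\varepsilon)$ that vanishes in the limit and a piece with $\nabla_y \hat{q}_m(\cdot,\cdot,x/\varepsilon)$ that survives; using (\ref{e57}) I pass to the two-scale limit in these gradient terms.

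Collecting everything yields, for every admissible $\hat{q}_m$,
\begin{equation*}
\sum_{m=1,2} \int_{Q \times Y_m} \hat{\lambda}_m \bigl(\nabla \theta_m(t,x) + \nabla_y \hat{\theta}_m(t,x,y)\bigr) \nabla_y \hat{q}_m(t,x,y)\, dt\, dx\, dy = 0,
\end{equation*}
which, by a standard integration by parts in $y$ and density, is equivalent to the cell problem
\begin{equation*}
\left\{
\begin{array}{ll}
-\mathrm{div}_y\bigl(\hat{\lambda}_m(\nabla \theta_m + \nabla_y \hat{\theta}_m)\bigr) = 0 & \text{a.e. in } Q \times Y_m, \\
\bigl[\hat{\lambda}_m(\nabla \theta_m + \nabla_y \hat{\theta}_m)\bigr]\cdot\nu = 0 & \text{a.e. on } Q \times \Sigma, \\
y \longmapsto \hat{\theta}_m(t,x,y) & \text{is } Y\text{-periodic}.
\end{array}
\right.
\end{equation*}
Since $\nabla \theta_m(t,x)$ plays the role of a parameter, the linearity of this problem and the well-posedness noted after (\ref{e43}) let me write $\hat{\theta}_m = \vartheta_m^i(y) \partial \theta_m/\partial x_i(t,x)$ up to an additive function of $(t,x)$ only, which gives (\ref{e72}).

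The only mildly delicate point is controlling the interface remainder: one must verify that $\varepsilon \int_{\Sigma_T^\varepsilon} \omega(x/\varepsilon) (\theta_1^\varepsilon - \theta_2^\varepsilon)(\hat{q}_1^\varepsilon - \hat{q}_2^\varepsilon)\, ds^\varepsilon\, dt$ remains uniformly bounded, but this follows from Cauchy--Schwarz on $\Sigma^\varepsilon$, the scaling $\omega^\varepsilon = \varepsilon \omega(\cdot/\varepsilon)$, the $L_T^2(V^\varepsilon)$ bound on $\theta^\varepsilon$ (which controls the $\varepsilon$-weighted $L^2(\Sigma^\varepsilon)$ jump norm), and boundedness of $\omega$ together with Corollary \ref{c1} for $\hat{q}_m^\varepsilon$. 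Aside from this book-keeping, the argument is a direct transcription of Lemma \ref{l4}.
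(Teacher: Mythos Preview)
Your proposal is correct and matches the paper's approach exactly: the paper itself does not give a proof of Lemma~\ref{l5} but simply states that it ``follows the same lines as that of Lemma~\ref{l4},'' which is precisely what you have carried out by replacing $(p_m^\varepsilon,\kappa_m,\varsigma,g_m)$ with $(\theta_m^\varepsilon,\hat\lambda_m,\omega,h_m)$ and invoking (\ref{e57}) and (\ref{e59}) in place of (\ref{e56}) and (\ref{e58}).
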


The proof of this Lemma follows the same lines as that of Lemma \ref{l4} and
therefore will not be given.

\begin{lemma}
\label{l6}The macroscopic balance equation reads as follows:
\begin{equation}
\left\{
\begin{array}{l}
-\mathrm{div}\left( \sigma ^{0}\left( \mathbf{u}\right) \right) +B_{1}\nabla
p_{1}+B_{2}\nabla p_{2}+D_{1}\nabla \theta _{1}+D_{2}\nabla \theta _{2}=%
\mathbf{f}\text{ in }\Omega \text{,} \\
\  \\
\mathbf{u}=0\text{ on }\partial \Omega%
\end{array}%
\right.  \label{e73}
\end{equation}%
where $\mathcal{A},\ B_{m},\ D_{m}$ and $\mathbf{f}$ are defined by (\ref%
{e45}), (\ref{e46}), (\ref{e47}) and (\ref{e50}) respectively.
\end{lemma}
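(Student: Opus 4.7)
The plan is to pass to the two-scale limit in the weak formulation \eqref{e36} using a test function $\mathbf{v}\in\mathcal{D}(\Omega)^3$ that is independent of $\varepsilon$ (and hence trivially admissible with no $y$-dependence), then identify each limiting term via the corrector formulas of Lemmata \ref{l3}--\ref{l5}, and finally integrate by parts to recover the divergence form \eqref{e73}. Since $e(\mathbf{v})$ does not oscillate, the strain tensor $e(\mathbf{v})$ plays only the role of an ``$e_x$'' and no $e_y$ part appears in the test function.

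\textbf{Elasticity term.} Rewriting $\int_\Omega \mathbf{A}^\varepsilon e(\mathbf{u}^\varepsilon):e(\mathbf{v})\,\mathrm{d}x$ and using \eqref{e55}, one obtains in the limit
\[
\int_{\Omega\times Y}\mathbf{A}(y)\bigl(e(\mathbf{u})+e_y(\hat{\mathbf{u}})\bigr):e(\mathbf{v})\,\mathrm{d}x\mathrm{d}y .
\]
Substituting $\hat{\mathbf{u}}(t,x,y)=e_{kh}(\mathbf{u})(t,x)\,\mathbf{w}^{kh}(y)$ from \eqref{e60} and noting that $e(\mathbf{v})_{ij}=e_y(\mathbf{d}^{ij})\,e_{ij}(\mathbf{v})$ inside the $y$-integral (so that $e_y(\mathbf{w}^{kh}+\mathbf{d}^{kh})$ appears naturally), one recognizes through \eqref{e44}--\eqref{e45} the effective stress $\sigma^{0}(\mathbf{u})=\mathcal{A}\,e(\mathbf{u})$ and rewrites the limit as $\int_\Omega \sigma^0(\mathbf{u}):e(\mathbf{v})\,\mathrm{d}x$.

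\textbf{Coupling and source terms.} For each $m$, rewrite $\int_{\Omega_m^\varepsilon}\beta_m\nabla p_m^\varepsilon\cdot\mathbf{v}\,\mathrm{d}x=\int_\Omega \chi_m^\varepsilon\beta_m\nabla p_m^\varepsilon\cdot\mathbf{v}\,\mathrm{d}x$ and pass to the two-scale limit via \eqref{e56}, obtaining $\int_{\Omega\times Y_m}\beta_m(\nabla p_m+\nabla_y\hat{p}_m)\cdot\mathbf{v}\,\mathrm{d}x\mathrm{d}y$. Substituting the corrector representation \eqref{e65} and collecting the coefficients $\beta_m\int_{Y_m}(\delta_{ij}+\partial_{y_j}\pi_m^i)\,\mathrm{d}y=B_{m,ij}$ from \eqref{e46}, this term becomes $\int_\Omega (B_m\nabla p_m)\cdot\mathbf{v}\,\mathrm{d}x$. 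An analogous argument with \eqref{e57}, \eqref{e72} and \eqref{e47} produces $\int_\Omega (D_m\nabla\theta_m)\cdot\mathbf{v}\,\mathrm{d}x$. For the right-hand side, since $\chi_m(\cdot/\varepsilon)\rightharpoonup |Y_m|$ weakly in $L^2$ by the standard Riemann--Lebesgue result for periodic functions, we have $\int_\Omega\mathbf{f}^\varepsilon\cdot\mathbf{v}\,\mathrm{d}x\to\int_\Omega(|Y_1|\mathbf{f}_1+|Y_2|\mathbf{f}_2)\cdot\mathbf{v}\,\mathrm{d}x=\int_\Omega\mathbf{f}^{\ast}\cdot\mathbf{v}\,\mathrm{d}x$.

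Putting everything together and using the arbitrariness of $\mathbf{v}\in\mathcal{D}(\Omega)^3$, a distributional integration by parts yields the first line of \eqref{e73}; the boundary condition $\mathbf{u}=0$ on $\partial\Omega$ comes for free, since $\mathbf{u}^\varepsilon\in L_T^\infty(\mathbf{V})$ with $\mathbf{V}=H_0^1(\Omega)^3$ and the weak limit preserves the trace. The main technical point is the explicit identification of the homogenized elasticity tensor $\mathcal{A}$: one must justify replacing $e(\mathbf{v})$ by $e_y(\mathbf{d}^{ij})\,e_{ij}(\mathbf{v})$ in the cell integral and use the variational characterization of $\mathbf{w}^{ij}$ from \eqref{e41} to recover the symmetric form \eqref{e45}. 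This is a standard manipulation in periodic homogenization but is the only place where the cell problem \eqref{e41} is used in an essential (non-formal) way.
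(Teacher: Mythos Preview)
Your proposal is correct and follows essentially the same route as the paper's own proof: both choose $\mathbf{v}\in\mathcal{D}(\Omega)^3$, pass to the two-scale limit in \eqref{e36} using \eqref{e55}--\eqref{e57}, identify each limiting term via the corrector formulas \eqref{e60}, \eqref{e65}, \eqref{e72} and the definitions \eqref{e44}--\eqref{e47}, \eqref{e50}, and then integrate by parts. The only cosmetic difference is that the paper invokes Remark~\ref{r1} for the right-hand side rather than the Riemann--Lebesgue lemma directly, and it does not dwell on the equivalence of the symmetric form \eqref{e44} and the computational form \eqref{e45} for $\mathcal{A}$ as you do.
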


\begin{proof}
The convergence results obtained in Lemma \ref{l2} allow us to derive the
macroscopic equations (\ref{e51}). To do so, we first determine the limiting
equations of (\ref{e36})-(\ref{e39}). Let $\mathbf{v\in }\mathcal{D}(\Omega
)^{3}$. Multiplying (\ref{e36}) by $\mathbf{v}$ and passing to the limit, we
find by virtue of (\ref{e55})-(\ref{e57}) and Remark \ref{r1} that for a.e. $%
t\in \left( 0,T\right) $
\begin{eqnarray}
&&\int_{\Omega \times Y}\mathbf{A}[e(\mathbf{u})+e_{y}(\mathbf{\hat{u}})]e(%
\mathbf{v})\hspace{0.03cm}\mathrm{d}x\mathrm{d}y+\beta _{1}\int_{\Omega
\times Y_{1}}\left( \nabla p_{1}+\nabla _{y}\hat{p}_{1}\right) \mathbf{v}%
\hspace{0.03cm}\mathrm{d}x\mathrm{d}y  \notag \\
&&+\beta _{2}\int_{\Omega \times Y_{2}}\left( \nabla p_{2}+\nabla _{y}\hat{p}%
_{2}\right) \mathbf{v}\hspace{0.03cm}\mathrm{d}x\mathrm{d}y+\gamma
_{1}\int_{\Omega \times Y_{1}}\left( \nabla \theta _{1}+\nabla _{y}\hat{%
\theta}_{1}\right) \mathbf{v}\hspace{0.03cm}\mathrm{d}x\mathrm{d}y  \notag \\
&&+\gamma _{2}\int_{\Omega \times Y_{2}}\left( \nabla \theta _{2}+\nabla _{y}%
\hat{\theta}_{2}\right) \mathbf{v}\hspace{0.03cm}\mathrm{d}x\mathrm{d}y
\notag \\
&=&\int_{\Omega \times Y_{1}}\mathbf{f}_{1}\mathbf{v}\hspace{0.03cm}\mathrm{d%
}x\mathrm{d}y+\int_{\Omega \times Y_{2}}\mathbf{f}_{2}\mathbf{v}\hspace{%
0.03cm}\mathrm{d}x\mathrm{d}y.  \label{e74}
\end{eqnarray}%
Let us rewrite the first integral term in the l.h.s. of (\ref{e74}) with the
help of (\ref{e44}), (\ref{e45}) and (\ref{e60}). We have%
\begin{equation}
\int_{\Omega \times Y}\mathbf{A}_{ijkh}[e(\mathbf{u})+e_{y}(\mathbf{\hat{u}}%
)]e(\mathbf{v})\hspace{0.03cm}\mathrm{d}x\mathrm{d}y=\int_{\Omega }\sigma
_{ij}^{\hom }\left( \mathbf{u}\right) e_{ij}(\mathbf{v})\hspace{0.03cm}%
\mathrm{d}x.  \label{e75}
\end{equation}%
Likewise, by using (\ref{e65}) and (\ref{e72}), there holds
\begin{eqnarray}
&&\beta _{1}\int_{\Omega \times Y_{1}}\left( \nabla p_{1}+\nabla _{y}\hat{p}%
_{1}\right) \mathbf{v}\hspace{0.03cm}\mathrm{d}x+\beta _{2}\int_{\Omega
\times Y_{2}}\left( \nabla p_{2}+\nabla _{y}\hat{p}_{2}\right) \mathbf{v}%
\hspace{0.03cm}\mathrm{d}x\mathrm{d}y  \notag \\
&=&\beta _{1}\int_{\Omega \times Y}\chi _{1}\left( \delta _{ik}+\frac{%
\partial \pi _{1}^{i}}{\partial y_{k}}\right) \frac{\partial p_{1}}{\partial
x_{i}}v_{k}\hspace{0.03cm}\mathrm{d}x  \notag \\
&&+\beta _{2}\int_{\Omega \times Y}\chi _{2}\left( \delta _{ik}+\frac{%
\partial \pi _{2}^{i}}{\partial y_{k}}\right) \frac{\partial p_{2}}{\partial
x_{i}}v_{k}\hspace{0.03cm}\mathrm{d}x  \label{e76}
\end{eqnarray}%
where $v_{k}$ is the $k^{th}$ component of $\mathbf{v}$. After simple
algebraic calculations, (\ref{e76}) becomes then
\begin{eqnarray}
&&\beta _{1}\int_{\Omega \times Y_{1}}\left( \nabla p_{1}+\nabla _{y}\hat{p}%
_{1}\right) \mathbf{v}\hspace{0.03cm}\mathrm{d}x\mathrm{d}y+\beta
_{2}\int_{\Omega \times Y_{2}}\left( \nabla p_{2}+\nabla _{y}\hat{p}%
_{2}\right) \mathbf{v}\hspace{0.03cm}\mathrm{d}x\mathrm{d}y  \notag \\
&=&\int_{\Omega }B_{1}\nabla p_{1}\left( x\right) \mathbf{v}\hspace{0.03cm}%
\mathrm{d}x+\int_{\Omega }B_{2}\nabla p_{2}\mathbf{v}\hspace{0.03cm}\mathrm{d%
}x.  \label{e77}
\end{eqnarray}%
In the same way, one can show that
\begin{eqnarray}
&&\int_{\Omega \times Y_{1}}\gamma _{1}\left( \nabla \theta _{1}+\nabla _{y}%
\hat{\theta}_{1}\right) \mathbf{v}\hspace{0.03cm}\mathrm{d}x\mathrm{d}%
y+\gamma _{2}\int_{\Omega \times Y_{2}}\left( \nabla \theta _{2}+\nabla _{y}%
\hat{\theta}_{2}\right) \mathbf{v}\hspace{0.03cm}\mathrm{d}x\mathrm{d}y
\notag \\
&=&\int_{\Omega }D_{1}\nabla \theta _{1}\mathbf{v}\hspace{0.03cm}\mathrm{d}%
x+\int_{\Omega }D_{2}\nabla \theta _{2}\mathbf{v}\hspace{0.03cm}\mathrm{d}x.
\label{e78}
\end{eqnarray}%
Finally, inserting (\ref{e50}), (\ref{e75}), (\ref{e77}) and (\ref{e78})
into (\ref{e74}) and using the fact that $\mathcal{D}(\Omega )^{3}$ is dense
in $H_{0}^{1}\left( \Omega \right) ^{3}$, we deduce the homogenized balance
formulation:
\begin{equation}
\begin{array}{c}
\int_{\Omega }\mathcal{A}_{ijkh}e_{kh}(\mathbf{u})e_{ij}(\mathbf{v})\hspace{%
0.03cm}\mathrm{d}x+\int_{\Omega }B_{1}\nabla p_{1}\mathbf{v}\hspace{0.03cm}%
\mathrm{d}x+\int_{\Omega }B_{2}\nabla p_{2}\mathbf{v}\hspace{0.03cm}\mathrm{d%
}x \\
\\
+\int_{\Omega }D_{1}\nabla \theta _{1}\mathbf{v}\hspace{0.03cm}\mathrm{d}%
x+\int_{\Omega }D_{2}\nabla \theta _{2}\mathbf{v}\hspace{0.03cm}\mathrm{d}%
x=\int_{\Omega }\mathbf{fv}\hspace{0.03cm}\mathrm{d}x%
\end{array}
\label{e79}
\end{equation}%
which by an integration by parts yields (\ref{e73}). The Lemma is then
proved.
\end{proof}

\begin{lemma}
\label{l7}The macroscopic mass conservation equation is given by:%
\begin{equation}
\left\{
\begin{array}{l}
\partial _{t}\left( \phi _{1}^{\ast }p_{1}+\beta _{1}\mathcal{C}_{1}:e\left(
\mathbf{u}\right) +\alpha _{1}^{\ast }\theta _{1}\right) -\mathrm{div}\left(
\mathcal{K}_{1}\nabla p_{1}\right) +\zeta ^{\ast }\left( p_{1}-p_{2}\right)
=g_{1}^{\ast },\text{ in }Q, \\
\\
\partial _{t}\left( \phi _{2}^{\ast }p_{2}+\beta _{2}\mathcal{C}_{2}:e\left(
\mathbf{u}\right) +\alpha _{2}^{\ast }\theta _{2}\right) -\mathrm{div}\left(
\mathcal{K}_{2}\nabla p_{2}\right) +\zeta ^{\ast }\left( p_{2}-p_{1}\right)
=g_{2}^{\ast },\text{ in }Q, \\
\\
p_{1}=0,\text{ on }\Gamma _{T}, \\
\\
\mathcal{K}_{2}\nabla p_{2}\cdot \nu =0\text{ on }\Gamma _{T}, \\
\\
p_{1}\left( 0,.\right) =p_{2}\left( 0,.\right) =0\text{ in }\Omega%
\end{array}%
\right.  \label{e80}
\end{equation}%
where $\mathcal{C}_{m},\ \mathcal{K}_{m}$ and $\left( \phi _{m}^{\ast
},\alpha _{m}^{\ast },\zeta ^{\ast },g_{m}^{\ast }\right) $ are given in (%
\ref{e48}), (\ref{e49}) and (\ref{e50}) respectively.
\end{lemma}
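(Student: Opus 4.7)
The plan is to pass to the two-scale limit in the weak mass conservation equation (\ref{e37}), in exact parallel with the treatment of the balance equation in Lemma \ref{l6}. First I would select separated test functions of the form $q_m(t,x) = \varphi(t)\psi_m(x)$, with $\varphi \in \mathcal{D}(0,T)$, $\psi_1 \in \mathcal{D}(\Omega)$ (to encode the vanishing trace on $\Gamma$) and $\psi_2 \in \mathcal{D}(\overline{\Omega})$ (no boundary restriction, since $\overline{\Omega}_2^\varepsilon \subset \Omega$). Inserted into (\ref{e37}) and integrated over $(0,T)$, these are $\varepsilon$-independent and qualify as admissible test functions in the two-scale sense; a standard time integration by parts (using $\varphi(0)=\varphi(T)=0$) moves the temporal derivative off the bracketed quantity, leaving an identity that contains only the pressure, temperature, and displacement fields themselves.

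Next I would pass to the limit term by term with the help of Lemma \ref{l2}. For the pressure and temperature parts of the temporal term, (\ref{e53})--(\ref{e54}) give the $y$-integrated contributions $\phi_m^* p_m$ and $\alpha_m^* \theta_m$. For the displacement coupling the key step is to use (\ref{e55}) together with the representation $\hat{\mathbf{u}} = \mathbf{w}^{ij}(y) e_{ij}(\mathbf{u})$ from Lemma \ref{l3}: integrating $(\mathrm{div}\,\mathbf{u} + \mathrm{div}_y \hat{\mathbf{u}})\psi_m$ over $\Omega \times Y_m$ and recognizing the definition (\ref{e48}) of $\mathcal{C}_{m,ij}$ yields the effective Biot-type term $\beta_m \mathcal{C}_m : e(\mathbf{u})$. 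For the Darcy term, (\ref{e56}) combined with the corrector formula (\ref{e65}) produces, after integration in $y$, the matrix $\mathcal{K}_m$ defined in (\ref{e49}), giving $\mathcal{K}_m \nabla p_m \cdot \nabla \psi_m$. The interface contribution is handled by the surface two-scale convergence (\ref{e58}) applied to the admissible function $\varphi(t)(\psi_1(x) - \psi_2(x))$, yielding the exchange term $\zeta^*(p_1 - p_2)(\psi_1 - \psi_2)$, while the right-hand side converges to $\sum_m g_m^*\psi_m$ via Lemma \ref{l1} (Remark \ref{r1}).

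To extract the strong form (\ref{e80}) I would then test successively with $\psi_2 \equiv 0$ and $\psi_1 \equiv 0$ to decouple the two phases, extend to $\psi_1 \in H_0^1(\Omega)$ and $\psi_2 \in H^1(\Omega)$ by density, and integrate by parts in $x$. The Dirichlet condition $p_1=0$ on $\Gamma_T$ is built into the limit function space; the Neumann condition $\mathcal{K}_2 \nabla p_2 \cdot \nu = 0$ on $\Gamma_T$ arises as the natural boundary condition because $\psi_2$ is allowed to be nonzero on $\Gamma$ (the inclusions never touch the outer boundary). The initial condition $p_m(0,\cdot)=0$ is obtained from a trace-in-time argument applied to the combination $\phi_m^* p_m + \beta_m \mathcal{C}_m : e(\mathbf{u}) + \alpha_m^* \theta_m$, whose weak time derivative lies in a dual space, using that each $p_m^\varepsilon(0,\cdot) = 0$ by (\ref{e39}).

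The main obstacle is handling the displacement coupling $\partial_t \mathrm{div}\,\mathbf{u}^\varepsilon$: $\mathbf{u}^\varepsilon$ converges only weakly in $\mathbf{V}$, and the two-scale limit of $\nabla \mathbf{u}^\varepsilon$ carries a genuine corrector term $\nabla_y \hat{\mathbf{u}}$, so the limit cannot be pushed inside the time derivative in a naive way. The combination of time integration by parts with Lemma \ref{l3}, which renders $\hat{\mathbf{u}}$ linear in $e(\mathbf{u})$ through the cell problem (\ref{e41}), is precisely what converts this obstruction into the correct effective coefficient $\mathcal{C}_m$; the rest of the argument is a routine orchestration of the compactness results of Section \ref{sec2}.
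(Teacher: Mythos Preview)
Your proposal is correct and follows essentially the same approach as the paper: integrate (\ref{e37}) by parts in time, pass to the two-scale limit term by term using Lemma \ref{l2} together with the corrector representations from Lemmata \ref{l3} and \ref{l4} to produce $\mathcal{C}_m$ and $\mathcal{K}_m$, use (\ref{e58}) for the interface term, and recover (\ref{e80}) by density and integration by parts. Your choice of $\psi_2 \in \mathcal{D}(\overline{\Omega})$ and the explicit trace-in-time argument for the initial condition are in fact slightly more careful than the paper, which takes $q_m \in \mathcal{D}((0,T)\times\Omega)$ and simply asserts that integration by parts yields the boundary and initial conditions.
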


\begin{proof}
Let $q_{m}(t,x)\in \mathcal{D}((0,T)\times \Omega )$. Integration by parts
in (\ref{e37}) with respect to the time variable $t\in \left( 0,T\right) $
yields:
\begin{equation}
\begin{array}{l}
\int_{Q_{1}^{\varepsilon }}(\phi _{1}p_{1}^{\varepsilon }\left( t,x\right)
+\beta _{1}\mathrm{div}\mathbf{u}^{\varepsilon }\left( t,x\right) +\alpha
_{1}\theta _{1}^{\varepsilon }\left( t,x\right) )\partial _{t}q_{1}\left(
t,x\right) \hspace{0.03cm}\mathrm{d}t\mathrm{d}x \\
\\
+\int_{Q_{1}^{\varepsilon }}(\phi _{2}p_{2}^{\varepsilon }\left( t,x\right)
+\beta _{2}\mathrm{div}\mathbf{u}^{\varepsilon }\left( t,x\right) +\alpha
_{2}\theta _{2}^{\varepsilon }\left( t,x\right) )\partial _{t}q_{2}\left(
t,x\right) \hspace{0.03cm}\mathrm{d}t\mathrm{d}x \\
\\
+\int_{Q_{1}^{\varepsilon }}\kappa _{1}\nabla p_{1}^{\varepsilon }\left(
t,x\right) \nabla q_{1}\left( t,x\right) \hspace{0.03cm}\mathrm{d}t\mathrm{d}%
x+\int_{Q_{2}^{\varepsilon }}\kappa _{2}\nabla p_{2}^{\varepsilon }\left(
t,x\right) \nabla q_{2}\left( t,x\right) \hspace{0.03cm}\mathrm{d}t\mathrm{d}%
x \\
\\
+\int_{\Sigma _{T}^{\varepsilon }}\varsigma ^{\varepsilon }\left( x\right)
(p_{1}^{\varepsilon }\left( t,x\right) -p_{2}^{\varepsilon }\left(
t,x\right) )(q_{1}\left( t,x\right) -q_{2}\left( t,x\right) )\hspace{0.03cm}%
\mathrm{d}s^{\varepsilon }(x)\mathrm{d}t \\
\\
=\int_{Q_{1}^{\varepsilon }}g_{1}\left( x\right) q_{1}\left( t,x\right)
\hspace{0.03cm}\mathrm{d}t\mathrm{d}x+\int_{Q_{2}^{\varepsilon }}g_{2}\left(
x\right) q_{2}\left( t,x\right) \hspace{0.03cm}\mathrm{d}t\mathrm{d}x.%
\end{array}
\label{e81}
\end{equation}%
Using (\ref{e53}) and (\ref{e54}) we obtain:
\begin{eqnarray}
&&\underset{\varepsilon \rightarrow 0}{\lim }\int_{Q_{m}^{\varepsilon
}}(\phi _{m}p_{m}^{\varepsilon }\left( t,x\right) +\alpha _{m}\theta
_{m}^{\varepsilon }\left( t,x\right) )\partial _{t}q_{m}\left( t,x\right)
\hspace{0.03cm}\mathrm{d}t\mathrm{d}x  \notag \\
&=&\int_{Q\times Y_{m}}(\phi _{m}p_{m}\left( t,x\right) +\alpha _{m}\theta
_{m}\left( t,x\right) )\partial _{t}q_{m}\left( t,x\right) \hspace{0.03cm}%
\mathrm{d}t\mathrm{d}x\mathrm{d}y.  \label{e82}
\end{eqnarray}%
Furthermore, thanks to (\ref{e55}) and (\ref{e56}) we see that%
\begin{equation}
\begin{array}{l}
\underset{\varepsilon \rightarrow 0}{\lim }\int_{Q_{m}^{\varepsilon }}\beta
_{m}\mathrm{div}\mathbf{u}^{\varepsilon }\left( t,x\right) \partial
_{t}q_{m}\left( t,x\right) \hspace{0.03cm}\mathrm{d}t\mathrm{d}x \\
\  \\
=\int_{Q\times Y_{m}}\beta _{m}\left( \mathrm{div}\mathbf{u}\left(
t,x\right) +\mathrm{div}_{y}\mathbf{\hat{u}}\left( t,x,y\right) \right)
\partial _{t}q_{m}\left( t,x\right) \hspace{0.03cm}\mathrm{d}t\mathrm{d}x%
\mathrm{d}y%
\end{array}
\label{e83}
\end{equation}%
and
\begin{equation}
\begin{array}{l}
\underset{\varepsilon \rightarrow 0}{\lim }\int_{Q_{m}^{\varepsilon }}\kappa
_{m}\nabla p_{m}^{\varepsilon }\left( t,x\right) \nabla q_{m}\left(
t,x\right) \hspace{0.03cm}dx\mathrm{d}t \\
\\
=\int_{Q\times Y_{m}}(\kappa _{m}\left\{ \nabla p_{m}\left( t,x\right)
+\nabla _{y}\hat{p}_{m}\left( t,x,y\right) \right\} )\nabla q_{m}\left(
t,x\right) \hspace{0.03cm}\mathrm{d}t\mathrm{d}x\mathrm{d}y.%
\end{array}
\label{e84}
\end{equation}%
Now, using (\ref{e60}) we observe that
\begin{equation}
\begin{array}{l}
\int_{Q\times Y_{m}}\beta _{m}\left( \mathrm{div}\mathbf{u}\left( t,x\right)
+\mathrm{div}_{y}\mathbf{\hat{u}}\left( t,x,y\right) \right) \partial
_{t}q_{m}\left( t,x\right) \hspace{0.03cm}\mathrm{d}t\mathrm{d}x\mathrm{d}y
\\
\\
=\int_{Q}\beta _{m}\mathcal{C}_{m}:e\left( \mathbf{u}\right) \left(
t,x\right) \partial _{t}q_{m}\left( t,x\right) \hspace{0.03cm}\mathrm{d}t%
\mathrm{d}x.%
\end{array}
\label{e85}
\end{equation}
In the same way, using (\ref{e65}) we find that
\begin{equation}
\begin{array}{l}
\int_{Q\times Y_{m}}(\kappa _{m}\left\{ \nabla p_{m}\left( t,x\right)
+\nabla _{y}\hat{p}_{m}\left( t,x,y\right) \right\} )\nabla q_{m}\left(
t,x\right) \hspace{0.03cm}\mathrm{d}x\mathrm{d}y\mathrm{d}t \\
\\
=\int_{Q}\mathcal{K}_{m}\nabla p_{m}\left( x,t\right) \nabla q_{m}\left(
x,t\right) \hspace{0.03cm}\mathrm{d}t\mathrm{d}x.%
\end{array}
\label{e86}
\end{equation}
Now, taking into account (\ref{e58}) we have
\begin{equation}
\begin{array}{c}
\underset{\varepsilon \rightarrow 0}{\lim }\int_{\Sigma _{T}^{\varepsilon
}}\varepsilon \varsigma \left( \frac{x}{\varepsilon }\right)
(p_{1}^{\varepsilon }\left( x,t\right) -p_{2}^{\varepsilon }\left( x\right)
)(q_{1}\left( x,t\right) -q_{2}\left( x,t\right) )\hspace{0.03cm}\mathrm{d}%
s^{\varepsilon }(x)\mathrm{d}t \\
\  \\
=\int_{Q\times \Sigma }\varsigma \left( y\right) (p_{1}\left( x,t\right)
-p_{2}\left( x\right) )(q_{1}\left( t,x\right) -q_{2}\left( t,x\right) )%
\hspace{0.03cm}\mathrm{d}t\mathrm{d}x\mathrm{d}s(y).%
\end{array}
\label{e87}
\end{equation}%
Using Remark \ref{r1}, we infer that
\begin{equation}
\begin{array}{l}
\underset{\varepsilon \rightarrow 0}{\lim }\left( \int_{Q_{1}^{\varepsilon
}}g_{1}\left( x\right) q_{1}\left( t,x\right) \hspace{0.03cm}\mathrm{d}t%
\mathrm{d}x+\int_{Q_{2}^{\varepsilon }}g_{2}\left( x\right) q_{2}\left(
t,x\right) \hspace{0.03cm}\mathrm{d}t\mathrm{d}x\right) \\
\\
=\int_{Q}\tilde{g}_{1}q_{1}\left( t,x\right) \hspace{0.03cm}\mathrm{d}t%
\mathrm{d}x+\int_{Q}\tilde{g}_{2}q_{2}\left( t,x\right) \hspace{0.03cm}%
\mathrm{d}t\mathrm{d}x.%
\end{array}
\label{e88}
\end{equation}%
Finally, having in mind (\ref{e82})-(\ref{e88}), we can now pass to the
limit in (\ref{e81}) to get
\begin{equation*}
\begin{array}{l}
\int_{Q}\left\{ (\phi _{1}^{\ast }p_{1}\left( t,x\right) +\beta _{1}\mathcal{%
C}_{1}:e\left( \mathbf{u}\right) \left( t,x\right) +\alpha _{1}^{\ast
}\theta _{1}\left( t,x\right) \right\} \partial _{t}q_{1}\left( t,x\right)
\hspace{0.03cm}\mathrm{d}t\mathrm{d}x \\
+\int_{Q}\left\{ (\phi _{2}^{\ast }p_{2}\left( t,x\right) +\beta _{2}%
\mathcal{C}_{2}:e\left( \mathbf{u}\right) \left( t,x\right) +\alpha
_{2}^{\ast }\theta _{2}\left( t,x\right) \right\} \partial _{t}q_{2}\left(
t,x\right) \hspace{0.03cm}\mathrm{d}t\mathrm{d}x \\
+\int_{Q}\mathcal{K}_{1}\nabla p_{1}\left( t,x\right) \nabla q_{1}\left(
t,x\right) \hspace{0.03cm}\mathrm{d}t\mathrm{d}x+\int_{Q}\mathcal{K}%
_{2}\nabla p_{2}\left( t,x\right) \nabla q_{2}\left( t,x\right) \hspace{%
0.03cm}\mathrm{d}t\mathrm{d}x \\
+\int_{Q}\zeta ^{\ast }(p_{1}\left( t,x\right) -p_{2}\left( t,x\right)
)(q_{1}\left( t,x\right) -q_{2}\left( t,x\right) )\hspace{0.03cm}\mathrm{d}t%
\mathrm{d}x \\
=\int_{Q}g_{1}^{\ast }\left( x\right) q_{1}\left( t,x\right) \hspace{0.03cm}%
\mathrm{d}t\mathrm{d}x+\int_{Q}g_{2}^{\ast }\left( x\right) q_{2}\left(
t,x\right) \hspace{0.03cm}\mathrm{d}t\mathrm{d}x%
\end{array}%
\end{equation*}%
which by integration by parts yields (\ref{e80}). This gives the desired
result.
\end{proof}

Likewise, as in the proof of Lemma \ref{l7}, one can analogously show the
following result:

\begin{lemma}
\label{l8}The macroscopic heat equation is given by:%
\begin{equation*}
\left\{
\begin{array}{l}
\partial _{t}\left\{ c_{1}^{\ast }\theta _{1}+\gamma _{1}\mathcal{C}%
_{1}:e\left( \mathbf{u}\right) +\alpha _{1}^{\ast }p_{1}\right\} -\mathrm{div%
}\left( \mathcal{L}_{1}\nabla \theta _{1}\right) +\omega ^{\ast }\left(
\theta _{1}-\theta _{2}\right) =h_{1}^{\ast },\text{ in }Q, \\
\\
\partial _{t}\left\{ c_{2}^{\ast }\theta _{2}+\gamma _{2}\mathcal{C}%
_{2}:e\left( \mathbf{u}\right) +\alpha _{2}^{\ast }p_{2}\right\} -\mathrm{div%
}\left( \mathcal{L}_{2}\nabla \theta _{2}\right) +\omega ^{\ast }\left(
\theta _{2}-\theta _{1}\right) =h_{2}^{\ast },\text{ in }Q, \\
\\
\theta _{1}=0,\text{ on }\Gamma _{T}, \\
\\
\mathcal{L}_{2}\nabla \theta _{2}\cdot \nu =0\text{ on }\Gamma _{T}, \\
\\
\theta _{1}\left( 0,.\right) =\theta _{2}\left( 0,.\right) =0\text{ in }%
\Omega%
\end{array}%
\right.
\end{equation*}%
where $\mathcal{L}_{m}$ and $\left( c_{m}^{\ast },\omega ^{\ast
},h_{m}^{\ast }\right) $ are given in (\ref{e49}) and (\ref{e50})
respectively.
\end{lemma}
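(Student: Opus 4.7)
The plan is to mimic the proof of Lemma \ref{l7} line by line, replacing the pressure equation (\ref{e37}) by the temperature equation (\ref{e38}) and using the corrector relation (\ref{e72}) instead of (\ref{e65}). First, I would take a test pair $q_m(t,x)\in\mathcal{D}((0,T)\times\Omega)$ (independent of $y$, extended so that $q^\varepsilon(x)=q_m(t,x)$ on $\Omega_m^\varepsilon$) in (\ref{e38}) and integrate by parts in time to move $\partial_t$ onto $q_m$, using the homogeneous initial data (\ref{e39}) to kill the boundary terms. This yields an identity whose left-hand side is a sum of five integrals: a time-derivative block involving $c_m\theta_m^\varepsilon+\gamma_m\mathrm{div}\mathbf{u}^\varepsilon+\alpha_m p_m^\varepsilon$, the two conductivity integrals $\int_{Q_m^\varepsilon}\hat{\lambda}_m\nabla\theta_m^\varepsilon\nabla q_m$, and the surface term $\int_{\Sigma_T^\varepsilon}\omega^\varepsilon(\theta_1^\varepsilon-\theta_2^\varepsilon)(q_1-q_2)$, with the source term $\int_{Q_m^\varepsilon} h_m q_m$ on the right.

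Next I would pass to the two-scale limit term by term using Lemma \ref{l2}. The zeroth-order contributions $c_m\theta_m^\varepsilon$ and $\alpha_m p_m^\varepsilon$ yield $|Y_m|c_m\theta_m=c_m^\ast\theta_m$ and $\alpha_m^\ast p_m$ by (\ref{e53})--(\ref{e54}). For the divergence term, using (\ref{e55}) and the scale-separation (\ref{e60}) from Lemma \ref{l3}, I obtain
\[
\int_{Q\times Y_m}\gamma_m(\mathrm{div}\mathbf{u}+\mathrm{div}_y\mathbf{\hat{u}})\,\partial_t q_m\,\mathrm{d}t\mathrm{d}x\mathrm{d}y=\int_Q\gamma_m\mathcal{C}_m:e(\mathbf{u})\,\partial_t q_m\,\mathrm{d}t\mathrm{d}x,
\]
recalling the definition (\ref{e48}) of $\mathcal{C}_m$. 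For the conductivity integral, (\ref{e57}) combined with the representation (\ref{e72}) of $\hat\theta_m$ in terms of $\vartheta_m^i$ gives, by a short computation matching (\ref{e49}),
\[
\lim_{\varepsilon\to0}\int_{Q_m^\varepsilon}\hat{\lambda}_m\nabla\theta_m^\varepsilon\nabla q_m\,\mathrm{d}t\mathrm{d}x=\int_Q\mathcal{L}_m\nabla\theta_m\nabla q_m\,\mathrm{d}t\mathrm{d}x.
\]
The interfacial term is handled exactly as in (\ref{e87}) using (\ref{e59}): since $q_1,q_2$ are independent of $y$, the limit is $\int_Q\omega^\ast(\theta_1-\theta_2)(q_1-q_2)\,\mathrm{d}t\mathrm{d}x$. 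Finally, Remark \ref{r1} gives $\int_{Q_m^\varepsilon}h_m q_m\to\int_Q h_m^\ast q_m$.

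Assembling these limits produces the variational identity analogous to the one at the end of the proof of Lemma \ref{l7}. A density argument extending $q_m$ from $\mathcal{D}((0,T)\times\Omega)$ to suitable test spaces, followed by integration by parts in $x$, yields the two bulk PDEs together with the natural boundary condition $\mathcal{L}_2\nabla\theta_2\cdot\nu=0$ on $\Gamma_T$ for the inclusion phase (arising because no boundary condition was imposed there at the microscale) and the Dirichlet condition $\theta_1=0$ inherited from (\ref{e33}) via the weak closure $\theta_1\in L_T^\infty(H_0^1(\Omega))$. The initial conditions transfer from (\ref{e39}) by weak lower semicontinuity.

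The only step requiring genuine care, rather than routine copying from Lemma \ref{l7}, is the identification of the matrix $\mathcal{L}_m$ from the two-scale limit of $\hat\lambda_m\nabla\theta_m^\varepsilon\cdot\nabla q_m$: one must substitute (\ref{e72}) and verify that the resulting $\int_{Y_m}\hat\lambda_m(\delta^j_i+\partial_{y_i}\vartheta_{mj})\,\mathrm{d}y$ coincides with (\ref{e49}); this amounts to the same symmetric reorganization performed in (\ref{e76})--(\ref{e77}) for the pressure block, so no new difficulty arises.
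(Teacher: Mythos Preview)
Your proposal is correct and follows exactly the approach the paper itself indicates: immediately before stating Lemma~\ref{l8}, the paper writes ``Likewise, as in the proof of Lemma~\ref{l7}, one can analogously show the following result'' and gives no further argument. Your term-by-term transcription of the proof of Lemma~\ref{l7}---replacing $(p_m^\varepsilon,\kappa_m,\varsigma,\hat p_m,\pi_m^i,\mathcal{K}_m)$ by $(\theta_m^\varepsilon,\hat\lambda_m,\omega,\hat\theta_m,\vartheta_m^i,\mathcal{L}_m)$ and invoking (\ref{e54}), (\ref{e57}), (\ref{e59}), (\ref{e72}) in place of (\ref{e53}), (\ref{e56}), (\ref{e58}), (\ref{e65})---is precisely what is intended.
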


\

\begin{proof}[Proof of Theorem \ref{t1}]
Collecting all the results given in Lemmata \ref{l6}, \ref{l7}, \ref{l8}, we arrive at
the homogenized system (\ref{e51}).
\end{proof}

\section{A corrector result\label{cr}}

Let $\mathbf{\hat{u}}$, $\hat{p}_{m}$ and $\hat{\theta}_{m}$ to be the
corrector terms of \textbf{$u$}$^{\varepsilon }$, $p_{m}^{\varepsilon }$ and
$\theta _{m}^{\varepsilon }$ respectively. They are given by (\ref{e55}), (%
\ref{e56}) and (\ref{e57}) respectively. Now we first give the following
corrector result.

\begin{theorem}
\label{t2}Assume that $A\in L^{\infty }\left( Y\right) $. Then $e_{y}(\hat{u}%
)$\ is an admissible test function and the sequence $[e($\textbf{$u$}$%
^{\varepsilon })\left( x\right) -e($\textbf{$u$}$)\left( x\right) -e_{y}(%
\mathbf{\hat{u}})\left( x,\frac{x}{\varepsilon }\right) ]$ converges
strongly to $0$ in $L^{2}\left( \Omega \right) ^{3\times 3}$. We also have
\begin{equation*}
\begin{array}{l}
\lim_{\varepsilon \rightarrow 0}\left\Vert \chi _{m}\left( \frac{x}{%
\varepsilon }\right) \left\{ \nabla p_{m}^{\varepsilon }\left( t,x\right)
-\nabla p_{m}\left( t,x\right) -\nabla _{y}\hat{p}_{m}\left( t,x,\frac{x}{%
\varepsilon }\right) \right\} \right\Vert _{0,Q_{T}}=0, \\
\  \\
\lim_{\varepsilon \rightarrow 0}\left\Vert \chi _{m}\left( \frac{x}{%
\varepsilon }\right) \left\{ \nabla \theta _{m}^{\varepsilon }\left(
t,x\right) -\nabla \theta _{m}\left( t,x\right) -\nabla _{y}\hat{\theta}%
_{m}\left( t,x,\frac{x}{\varepsilon }\right) \right\} \right\Vert
_{0,Q_{T}}=0%
\end{array}%
\end{equation*}%
and
\begin{equation*}
\begin{array}{r}
\lim_{\varepsilon }\left( \int_{\Omega }\mathbf{A}^{\varepsilon }e(\mathbf{u}%
^{\varepsilon })e(\mathbf{u}^{\varepsilon })\hspace{0.03cm}\mathrm{d}%
x+\sum_{m}\int_{\Omega _{m}^{\varepsilon }}\left( \alpha _{m}\nabla
p_{m}^{\varepsilon }+\gamma _{m}\nabla \theta _{m}^{\varepsilon }\right)
\mathbf{u}^{\varepsilon }\hspace{0.03cm}\mathrm{d}x\right) \\
\\
=\int_{\Omega }\mathcal{A}e(\mathbf{u})e(\mathbf{u})\hspace{0.03cm}\mathrm{d}%
x+\sum_{m}\int_{\Omega }\left( \xi _{m}\nabla \theta _{m}+\eta _{m}\nabla
p_{m}\right) \mathbf{u}^{0}\hspace{0.03cm}\mathrm{d}x.%
\end{array}%
\end{equation*}
\end{theorem}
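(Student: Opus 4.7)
The plan is to adapt the standard Allaire--Nguetseng corrector technique based on the coercivity of the microscopic bilinear forms. First I would justify that the candidate correctors are admissible test functions: under $\mathbf{A}\in L^\infty(Y)$ the cell solutions $\mathbf{w}^{ij}$, $\pi_m^i$, $\vartheta_m^i$ from (\ref{e41})--(\ref{e43}) are unique in $H^1_\#(Y)/\mathbb{R}$, and by density of $\mathcal{D}(\Omega;\mathcal{C}^\infty_\#(Y))$ one may approximate $e_{ij}(\mathbf{u})\mathbf{w}^{ij}(y)$, $\partial_ip_m(t,x)\pi_m^i(y)$ and $\partial_i\theta_m(t,x)\vartheta_m^i(y)$ by admissible test functions in the sense of Lemma \ref{l1}, so that Definition \ref{d1} applies to $e_y(\mathbf{\hat u})$, $\nabla_y\hat p_m$ and $\nabla_y\hat\theta_m$.

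The heart of the argument will be to consider the non-negative quantity
$$
E_\varepsilon:=\int_{Q}\mathbf{A}^\varepsilon\bigl[e(\mathbf{u}^\varepsilon)-e(\mathbf{u})-e_y(\mathbf{\hat u})^\varepsilon\bigr]:\bigl[e(\mathbf{u}^\varepsilon)-e(\mathbf{u})-e_y(\mathbf{\hat u})^\varepsilon\bigr]\hspace{0.03cm}\mathrm{d}x\mathrm{d}t
$$
augmented by the diffusion pieces $\sum_m\int_{Q_m^\varepsilon}\kappa_m|\nabla p_m^\varepsilon-\nabla p_m-\nabla_y\hat p_m^\varepsilon|^2$ and $\sum_m\int_{Q_m^\varepsilon}\hat\lambda_m|\nabla\theta_m^\varepsilon-\nabla\theta_m-\nabla_y\hat\theta_m^\varepsilon|^2$, plus the interfacial pieces $\int_{\Sigma_T^\varepsilon}\varepsilon\varsigma(p_1^\varepsilon-p_2^\varepsilon)^2$ and $\int_{\Sigma_T^\varepsilon}\varepsilon\omega(\theta_1^\varepsilon-\theta_2^\varepsilon)^2$. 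By the uniform coercivity of $\mathbf{A}$, $\kappa_m$, $\hat\lambda_m$, $\varsigma$, $\omega$, proving $E_\varepsilon\to 0$ delivers simultaneously the strong $L^2$ convergence of each corrector residual and, a fortiori, the energy limit stated in the theorem. Expanding the squares, all the cross terms and the ``plus-plus'' squared terms pass to the limit directly using Lemma \ref{l2} together with Lemma \ref{l1}; what remains is the ``pure microscopic'' quadratic $\int \mathbf{A}^\varepsilon e(\mathbf{u}^\varepsilon):e(\mathbf{u}^\varepsilon)+\sum_m\int\kappa_m|\nabla p_m^\varepsilon|^2+\sum_m\int\hat\lambda_m|\nabla\theta_m^\varepsilon|^2$ plus the interfacial contribution.

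To extract its limit I would test (\ref{e36}) with $\mathbf{v}=\mathbf{u}^\varepsilon(t,\cdot)$, (\ref{e37}) with $q=p^\varepsilon(t,\cdot)$ and (\ref{e38}) with $q=\theta^\varepsilon(t,\cdot)$, then integrate in $t\in(0,T)$. Thanks to the initial conditions (\ref{e39}), the time-derivative pieces produce terminal contributions $\tfrac12\phi_m\int_\Omega p_m^\varepsilon(T,\cdot)^2$ and $\tfrac12 c_m\int_\Omega \theta_m^\varepsilon(T,\cdot)^2$, while adding the three tested equations with the right signs forces the cross-coupling terms involving $\alpha_m,\beta_m,\gamma_m$ to cancel pairwise, yielding a clean micro-energy identity expressing the remaining quadratic in terms of $\int\mathbf{f}^\varepsilon\cdot\mathbf{u}^\varepsilon$, $\int g^\varepsilon p^\varepsilon$, $\int h^\varepsilon \theta^\varepsilon$ plus the terminal $L^2$ squares. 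The right-hand side passes to the limit by Remark \ref{r1} combined with the Rellich--Kondrachov strong $L^2$-convergence of $\mathbf{u}^\varepsilon$ to $\mathbf{u}$. Finally I would test the homogenized system (\ref{e51}) with $(\mathbf{u},p_1,p_2,\theta_1,\theta_2)$ to obtain the analogous macro-energy identity, and the very definitions (\ref{e45}), (\ref{e46})--(\ref{e49}) of $\mathcal{A},B_m,D_m,\mathcal{C}_m,\mathcal{K}_m,\mathcal{L}_m$ via the cell problems would make the two sides match, giving $\limsup E_\varepsilon\le 0$.

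The main obstacle is the coupled cancellation of the $\alpha_m,\beta_m,\gamma_m$ cross-terms between the momentum, mass-balance and energy-balance equations, simultaneously at the microscopic and at the homogenized levels; matching these two cancellations is exactly what closes the identity and forces the cell tensors to appear. A secondary but non-trivial difficulty is justifying that $p_m^\varepsilon(T,\cdot)$ and $\theta_m^\varepsilon(T,\cdot)$ two-scale converge at the terminal time to $p_m(T,\cdot)$ and $\theta_m(T,\cdot)$: this requires Aubin--Lions type compactness, based on the uniform $L^2_T(V^\varepsilon)$ estimates (\ref{e40}) together with the uniform bounds on the distributional time-derivatives coming from the weak formulation, so that weak lower semicontinuity can be applied to the terminal $L^2$ squares when comparing to the homogenized energy.
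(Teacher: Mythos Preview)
Your overall strategy---expand the coercive quadratic form, pass the cross and ``plus-plus'' terms to the limit by two-scale convergence, and identify the remaining pure quadratic via an energy identity---is sound and is essentially the route the paper takes. But there is a genuine gap in the step where you claim that testing (\ref{e36}) with $\mathbf{u}^\varepsilon$, (\ref{e37}) with $p^\varepsilon$ and (\ref{e38}) with $\theta^\varepsilon$ and adding makes the $\beta_m,\gamma_m$ cross-couplings ``cancel pairwise''. The momentum equation (\ref{e36}) is quasi-static: testing it with $\mathbf{u}^\varepsilon$ produces $\int_{Q_m^\varepsilon}\beta_m\nabla p_m^\varepsilon\cdot\mathbf{u}^\varepsilon=-\int_{Q_m^\varepsilon}\beta_m p_m^\varepsilon\,\mathrm{div}\,\mathbf{u}^\varepsilon$, whereas (\ref{e37}) produces $\int_0^T\langle\beta_m\partial_t\mathrm{div}\,\mathbf{u}^\varepsilon,p_m^\varepsilon\rangle$. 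These two do \emph{not} cancel (they differ by a time derivative); the cancellation you have in mind only works if one tests the momentum equation with $\partial_t\mathbf{u}^\varepsilon$, which would yield a terminal elastic energy rather than $\int_Q\mathbf{A}^\varepsilon e(\mathbf{u}^\varepsilon){:}e(\mathbf{u}^\varepsilon)$. Only the $\alpha_m$ pairing between (\ref{e37}) and (\ref{e38}) combines into a total time derivative as you describe.

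The paper circumvents this by \emph{not} seeking cancellation: it treats the three correctors separately. For the displacement, it expands as in your $E_\varepsilon$ but only the elastic part, and passes the coupling term $\int(\beta_m p_m^\varepsilon+\gamma_m\theta_m^\varepsilon)\,\mathrm{div}\,\mathbf{u}^\varepsilon$ to the limit by a weak--strong pairing (strong $L^2$ convergence of $p_m^\varepsilon,\theta_m^\varepsilon$ from Rellich against weak two-scale convergence of $\chi_m^\varepsilon\,\mathrm{div}\,\mathbf{u}^\varepsilon$). For the pressure (and analogously the temperature), it first establishes an integral identity for the homogenized system (the analogue of (\ref{e37}) tested with $p_m$), and then compares: the coupling term $\int\beta_m\partial_t\mathrm{div}\,\mathbf{u}^\varepsilon\,p_m^\varepsilon$ is handled again by a weak--strong argument, while the interfacial quadratic $\int_{\Sigma_T^\varepsilon}\varepsilon\varsigma(p_1^\varepsilon-p_2^\varepsilon)^2$ requires a separate lemma based on the rescaled trace inequality $\varepsilon\|q\|_{0,\Sigma^\varepsilon}^2\le C(\varepsilon^2\|\nabla q\|_{0,\Omega_m^\varepsilon}^2+\|q\|_{0,\Omega_m^\varepsilon}^2)$ to show $\varepsilon\|p_m^\varepsilon-p_m\|_{0,\Sigma^\varepsilon}^2\to 0$. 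Your proposal does not mention this trace-inequality step, and it is not automatic; the interfacial quadratic does not pass to the limit merely by two-scale compactness on surfaces. So the two repairs you need are: replace the claimed $\beta_m,\gamma_m$ cancellation by weak--strong product arguments, and insert the rescaled trace estimate to control the interface terms.
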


To prove Theorem \ref{t2}, we begin by establishing an integral identity.

\begin{proposition}
We have
\begin{eqnarray}
&&-\int_{\Omega }\phi _{1}^{\ast }(p_{1})^{2}\left( T,x\right) \hspace{0.03cm%
}\mathrm{d}x-\int_{Q\times Y_{1}}\left[ \partial _{t}(\beta _{1}\left(
\mathrm{div}\mathbf{u}+\mathrm{div}_{y}\mathbf{\hat{u}}\right) +\alpha
_{1}\theta _{1})\right] p_{1}\hspace{0.03cm}\mathrm{d}t\mathrm{d}x\mathrm{d}y
\notag \\
&&-\int_{\Omega }\phi _{2}^{\ast }(p_{2})^{2}\left( T,x\right) \hspace{0.03cm%
}\mathrm{d}x-\int_{Q\times Y_{2}}\left[ \partial _{t}(\beta _{2}\left(
\mathrm{div}\mathbf{u}+\mathrm{div}_{y}\mathbf{\hat{u}}\right) +\alpha
_{2}\theta _{2})\right] p_{2}\hspace{0.03cm}\mathrm{d}t\mathrm{d}x\mathrm{d}y
\notag \\
&&+\int_{Q\times Y_{1}}\mathcal{K}_{1}\nabla p_{1}\nabla p_{1}\hspace{0.03cm}%
\mathrm{d}t\mathrm{d}x\mathrm{d}y+\int_{Q\times Y_{2}}\mathcal{K}_{2}\nabla
p_{2}\nabla p_{2}\hspace{0.03cm}\mathrm{d}t\mathrm{d}x\mathrm{d}y  \notag \\
&&+\int_{Q}\varsigma ^{\ast }\left( p_{1}-p_{2}\right) ^{2}\hspace{0.03cm}%
\mathrm{d}t\mathrm{d}x=\int_{\Omega }g_{1}^{\ast }p_{1}\hspace{0.03cm}%
\mathrm{d}x+\int_{\Omega }g_{2}^{\ast }p_{2}\hspace{0.03cm}\mathrm{d}x
\label{e89}
\end{eqnarray}

and%
\begin{eqnarray}
&&-\int_{\Omega }c_{1}^{\ast }(\theta _{1})^{2}\left( T,x\right) \hspace{%
0.03cm}\mathrm{d}x\mathrm{d}y-\int_{Q\times Y_{m}}\left[ \partial
_{t}(\gamma _{1}\left( \mathrm{div}\mathbf{u}+\mathrm{div}_{y}\mathbf{\hat{u}%
}\right) +\alpha _{1}p_{1})\right] \theta _{1}\hspace{0.03cm}\mathrm{d}t%
\mathrm{d}x\mathrm{d}y  \notag \\
&&-\int_{\Omega \times Y_{2}}c_{2}^{\ast }(\theta _{2})^{2}\left( T,x\right)
\hspace{0.03cm}\mathrm{d}x\mathrm{d}y-\int_{Q\times Y_{m}}\left[ \partial
_{t}(\gamma _{2}\left( \mathrm{div}\mathbf{u}+\mathrm{div}_{y}\mathbf{\hat{u}%
}\right) +\alpha _{2}p_{2})\right] \theta _{2}\hspace{0.03cm}\mathrm{d}t%
\mathrm{d}x\mathrm{d}y  \notag \\
&&+\int_{Q\times Y_{1}}\mathcal{L}_{1}\nabla \theta _{1}\nabla \theta _{1}%
\hspace{0.03cm}\mathrm{d}t\mathrm{d}x\mathrm{d}y+\int_{Q\times Y_{2}}%
\mathcal{L}_{2}\nabla \theta _{2}\nabla \theta _{2}\hspace{0.03cm}\mathrm{d}t%
\mathrm{d}x\mathrm{d}y+  \notag \\
&&\int_{Q}\omega ^{\ast }\left( \theta _{1}-\theta _{2}\right) ^{2}\hspace{%
0.03cm}\mathrm{d}t\mathrm{d}x=\int_{\Omega }h_{1}^{\ast }\theta _{1}\hspace{%
0.03cm}\mathrm{d}x+\int_{\Omega }h_{2}^{\ast }\theta _{2}\hspace{0.03cm}%
\mathrm{d}x.  \label{e90}
\end{eqnarray}
\end{proposition}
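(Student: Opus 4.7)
The plan is to derive both identities by the standard energy method applied to the homogenized system (\ref{e51}). For (\ref{e89}) I would test the second equation of (\ref{e51}) against $p_1$ and the third against $p_2$, integrate over $Q$, and add. For (\ref{e90}) the same procedure is carried out on the fourth and fifth equations tested against $\theta_1$ and $\theta_2$. In each case the three key manipulations are (i) integration by parts in time on the pure capacity term, (ii) integration by parts in space on the divergence term, and (iii) recombination of the interface coupling into a quadratic form.

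Concretely, for the $p_1$ equation I would rewrite $\int_Q \partial_t(\phi_1^{\ast}p_1)\,p_1\,\mathrm{d}t\,\mathrm{d}x$ and integrate by parts in $t$ using the initial condition $p_1(0,\cdot)=0$ from (\ref{e51}) to produce a boundary-in-time trace proportional to $\int_\Omega \phi_1^{\ast}(p_1(T,x))^2\,\mathrm{d}x$. The cross-coupling time derivative $\partial_t(\beta_1\mathcal{C}_1:e(\mathbf{u})+\alpha_1^{\ast}\theta_1)\,p_1$ is kept unintegrated and then reshaped into the $Q\times Y_1$ form appearing in (\ref{e89}) via (\ref{e48}) together with the corrector representation $\mathbf{\hat{u}}(t,x,y)=\mathbf{w}^{ij}(y)\,e_{ij}(\mathbf{u})(t,x)$ of Lemma \ref{l3}; this yields $\mathcal{C}_1:e(\mathbf{u})=\int_{Y_1}(\mathrm{div}\,\mathbf{u}+\mathrm{div}_y\mathbf{\hat{u}})\,\mathrm{d}y$ and similarly packages $\alpha_1^{\ast}\theta_1$ as $\int_{Y_1}\alpha_1\theta_1\,\mathrm{d}y$. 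Next, the space integration by parts turns $-\int_Q \mathrm{div}(\mathcal{K}_1\nabla p_1)\,p_1$ into $\int_Q\mathcal{K}_1\nabla p_1\cdot\nabla p_1$, the boundary term vanishing since $p_1=0$ on $\Gamma_T$; the analogous step for $p_2$ uses instead the Neumann condition $\mathcal{K}_2\nabla p_2\cdot\nu=0$ on $\Gamma_T$ from (\ref{e51}). Finally, adding the two tested equations combines the interface contributions through the algebraic identity $\varsigma^{\ast}(p_1-p_2)p_1+\varsigma^{\ast}(p_2-p_1)p_2=\varsigma^{\ast}(p_1-p_2)^2$, producing the squared-jump term on the left, while the right-hand side assembles to $\int g_1^{\ast}p_1+\int g_2^{\ast}p_2$.

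The proof of (\ref{e90}) is entirely parallel: one tests the homogenized heat equations against $\theta_m$ and performs the substitutions $(\phi_m^{\ast},\beta_m\mathcal{C}_m,\alpha_m^{\ast}\theta_m,\mathcal{K}_m,\varsigma^{\ast},g_m^{\ast})\mapsto(c_m^{\ast},\gamma_m\mathcal{C}_m,\alpha_m^{\ast}p_m,\mathcal{L}_m,\omega^{\ast},h_m^{\ast})$, using the Dirichlet condition $\theta_1=0$ and the Neumann condition $\mathcal{L}_2\nabla\theta_2\cdot\nu=0$ on $\Gamma_T$ to kill the boundary terms, and the identity $\omega^{\ast}(\theta_1-\theta_2)\theta_1+\omega^{\ast}(\theta_2-\theta_1)\theta_2=\omega^{\ast}(\theta_1-\theta_2)^2$ to produce the squared-jump contribution.

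The main delicate point will be justifying the time integration by parts on $\partial_t(\phi_m^{\ast}p_m)\,p_m$ (and analogously on $\partial_t(c_m^{\ast}\theta_m)\,\theta_m$): this requires that $p_m\in L^2_T(H^1)$ with the appropriate time derivative lying in the dual space and that $p_m(0,\cdot)=0$, so that $p_m$ is itself an admissible test function in (\ref{e51}). This regularity is inherited either directly from the two-scale passage to the limit of Section \ref{dhm} or, equivalently, by applying the Showalter--Momken existence theory (invoked in the proof of the a priori estimates (\ref{e40})) to the homogenized problem (\ref{e51}) itself; once this admissibility is granted the identities (\ref{e89})--(\ref{e90}) drop out of the computation above.
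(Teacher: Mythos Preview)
Your argument is correct, but it takes a different route from the paper's. You work directly with the homogenized system (\ref{e51}) (equivalently, with (\ref{e80}) from Lemma~\ref{l7}): you test the macroscopic pressure equations against $p_m$, integrate by parts in $t$ and in $x$, and then unpack the coefficients $\mathcal{C}_m$, $\alpha_m^{\ast}$ via (\ref{e48}) and Lemma~\ref{l3} to rewrite the cross-coupling terms as integrals over $Q\times Y_m$. The paper instead goes back to the \emph{microscopic} weak formulation (\ref{e37}), integrates by parts in $t$ with a general test function $q_m$, passes to the two-scale limit (thereby producing the $\mathrm{div}_y\mathbf{\hat u}$ and $\nabla_y\hat p_m$ terms directly), and only at the end approximates $q_m\to p_m$ and integrates by parts in $t$ once more to obtain (\ref{e89}).

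The two routes are equivalent because (\ref{e80}) is precisely the limit of (\ref{e37}); your approach is more economical since it reuses Lemma~\ref{l7} rather than redoing the two-scale passage. The paper's approach has the minor advantage that the $Q\times Y_m$ integrals appear naturally from the two-scale limit, whereas you have to reconstitute them a posteriori from the effective coefficients. Your discussion of the admissibility of $p_m$ as a test function (and the need for enough time regularity to justify $\int_Q\partial_t(\phi_m^{\ast}p_m)\,p_m$) is the right point to flag; the paper handles it by the density/approximation step ``taking any sequence $(q_m^n)_n$ converging to $p_m$''.
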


\begin{proof}
Take $q_{m}\in L^{2}\left( 0,T;H^{1}\left( \Omega _{m}^{\varepsilon }\right)
\right) $, $m=1,2$ in (\ref{e37}) and integrate by parts to yield:
\begin{equation*}
\begin{array}{l}
-\int_{Q_{1}^{\varepsilon }}(\phi _{1}p_{1}^{\varepsilon }\left( t,x\right)
+\beta _{1}\mathrm{div}\mathbf{u}^{\varepsilon }\left( T,x\right) +\alpha
_{1}\theta _{1}^{\varepsilon }\left( t,x\right) )\partial _{t}q_{1}\left(
t,x\right) \hspace{0.03cm}\mathrm{d}t\mathrm{d}x \\
\\
-\int_{Q_{2}^{\varepsilon }}(\phi _{2}p_{2}^{\varepsilon }\left( t,x\right)
+\beta _{2}\mathrm{div}\mathbf{u}^{\varepsilon }\left( t,x\right) +\alpha
_{2}\theta _{2}^{\varepsilon }\left( t,x\right) )\partial _{t}q_{2}\left(
t,x\right) \hspace{0.03cm}\mathrm{d}t\mathrm{d}x \\
\\
+\int_{\Omega _{1}^{\varepsilon }}(\phi _{1}p_{1}^{\varepsilon }\left(
T,x\right) +\beta _{1}\mathrm{div}\mathbf{u}^{\varepsilon }\left( T,x\right)
+\alpha _{1}\theta _{1}^{\varepsilon }\left( T,x\right) )q_{1}\left(
T,x\right) \hspace{0.03cm}\mathrm{d}x \\
\\
+\int_{\Omega _{2}^{\varepsilon }}(\phi _{2}p_{2}^{\varepsilon }\left(
T,x\right) +\beta _{2}\mathrm{div}\mathbf{u}^{\varepsilon }\left( T,x\right)
+\alpha _{2}\theta _{2}^{\varepsilon }\left( T,x\right) )q_{2}\left(
T,x\right) \hspace{0.03cm}\mathrm{d}t\mathrm{d}x \\
\\
-\int_{\Omega _{1}^{\varepsilon }}(\phi _{1}p_{1}^{\varepsilon }\left(
0,x\right) +\beta _{1}\mathrm{div}\mathbf{u}^{\varepsilon }\left( 0,x\right)
+\alpha _{1}\theta _{1}^{\varepsilon }\left( 0,x\right) )q_{1}\left(
0,x\right) \hspace{0.03cm}\mathrm{d}x \\
\\
-\int_{\Omega _{2}^{\varepsilon }}(\phi _{2}p_{2}^{\varepsilon }\left(
0,x\right) +\beta _{2}\mathrm{div}\mathbf{u}^{\varepsilon }\left( 0,x\right)
+\alpha _{2}\theta _{2}^{\varepsilon }\left( 0,x\right) )q_{2}\left(
0,x\right) \hspace{0.03cm}\mathrm{d}x \\
\\
\int_{Q_{1}^{\varepsilon }}\kappa _{1}\nabla p_{1}^{\varepsilon }\left(
t,x\right) \nabla q_{1}\left( t,x\right) \hspace{0.03cm}\mathrm{d}t\mathrm{d}%
x+\int_{Q_{2}^{\varepsilon }}\kappa _{2}\nabla p_{2}^{\varepsilon }\left(
t,x\right) \nabla q_{2}\left( t,x\right) \hspace{0.03cm}\mathrm{d}t\mathrm{d}%
x \\
\\
+\int_{\Sigma _{T}^{\varepsilon }}\varsigma ^{\varepsilon }\left( x\right)
(p_{1}^{\varepsilon }\left( t,x\right) -p_{2}^{\varepsilon }\left(
t,x\right) )(q_{1}\left( t,x\right) -q_{2}\left( t,x\right) )\hspace{0.03cm}%
\mathrm{d}s^{\varepsilon }(x)\mathrm{d}t \\
\\
=\int_{Q_{1}^{\varepsilon }}g_{1}\left( x\right) q_{1}\left( t,x\right)
\hspace{0.03cm}\mathrm{d}t\mathrm{d}x+\int_{Q_{2}^{\varepsilon }}g_{2}\left(
x\right) q_{2}\left( t,x\right) \hspace{0.03cm}\mathrm{d}t\mathrm{d}x.%
\end{array}%
\end{equation*}

Passing to the limit in the last identity we get
\begin{equation*}
\begin{array}{l}
-\int_{Q_{1}}(\phi _{1}p_{1}\left( t,x\right) +\beta _{1}\left( \mathrm{div}%
\mathbf{u}\left( t,x\right) +\mathrm{div}_{y}\mathbf{u}_{1}\left(
t,x,y\right) \right) +\alpha _{1}\theta _{1}\left( t,x\right) )\partial
_{t}q_{1}\left( t,x\right) \hspace{0.03cm}\mathrm{d}t\mathrm{d}x\mathrm{d}y
\\
\\
-\int_{Q_{2}}(\phi _{2}p_{2}\left( t,x\right) +\beta _{2}\left( \mathrm{div}%
\mathbf{u}\left( t,x\right) +\mathrm{div}_{y}\mathbf{u}_{1}\left(
t,x,y\right) \right) +\alpha _{2}\theta _{2}\left( t,x\right) )\partial
_{t}q_{2}\left( t,x\right) \hspace{0.03cm}\mathrm{d}t\mathrm{d}x\mathrm{d}y
\\
\\
+\int_{\Omega \times Y_{1}}(\phi _{1}p_{1}\left( T,x\right) +\beta
_{1}\left( \mathrm{div}\mathbf{u}\left( T,x\right) +\mathrm{div}_{y}\mathbf{u%
}_{1}\left( T,x,y\right) \right) +\alpha _{1}\theta _{1}\left( T,x\right)
)q_{1}\left( T,x\right) \hspace{0.03cm}\mathrm{d}t\mathrm{d}x\mathrm{d}y \\
\\
+\int_{\Omega \times Y_{2}}(\phi _{2}p_{2}\left( T,x\right) +\beta
_{2}\left( \mathrm{div}\mathbf{u}\left( t,x\right) +\mathrm{div}_{y}\mathbf{u%
}_{1}\left( t,x,y\right) \right) +\alpha _{2}\theta _{2}\left( T,x\right)
)q_{2}\left( T,x\right) \hspace{0.03cm}\mathrm{d}x\mathrm{d}y \\
\\
-\int_{\Omega \times Y_{1}}(\phi _{1}p_{1}^{\varepsilon }\left( 0,x\right)
+\beta _{1}\left( \mathrm{div}\mathbf{u}\left( 0,x\right) +\mathrm{div}_{y}%
\mathbf{u}_{1}\left( 0,x,y\right) \right) +\alpha _{1}\theta _{1}\left(
0,x\right) )q_{1}\left( 0,x\right) \hspace{0.03cm}\mathrm{d}x\mathrm{d}y \\
\\
-\int_{\Omega \times Y_{2}}(\phi _{2}p_{2}\left( 0,x\right) +\beta
_{2}\left( \mathrm{div}\mathbf{u}\left( 0,x\right) +\mathrm{div}_{y}\mathbf{u%
}_{1}\left( 0,x,y\right) \right) +\alpha _{2}\theta _{2}\left( 0,x\right)
)q_{2}\left( 0,x\right) \hspace{0.03cm}\mathrm{d}x\mathrm{d}y \\
\\
+\int_{Q_{1}}\kappa _{1}\left( \nabla p_{1}\left( t,x\right) +\nabla _{y}%
\hat{p}_{1}\left( t,x,y\right) \right) \nabla q_{1}\left( t,x\right) \hspace{%
0.03cm}\mathrm{d}t\mathrm{d}x\mathrm{d}y \\
\\
+\int_{Q_{2}}\kappa _{2}\left( \nabla p_{2}\left( t,x\right) +\nabla _{y}%
\hat{p}_{2}\left( t,x,y\right) \right) \nabla q_{2}\left( t,x\right) \hspace{%
0.03cm}\mathrm{d}t\mathrm{d}x\mathrm{d}y \\
\\
+\int_{\Sigma _{T}}\varsigma \left( y\right) (p_{1}\left( t,x\right)
-p_{2}\left( t,x\right) )(q_{1}\left( t,x\right) -q_{2}\left( t,x\right) )%
\hspace{0.03cm}\mathrm{d}t\mathrm{d}x\mathrm{d}s(y) \\
\\
=\int_{Q_{1}\times Y}g_{1}\left( x\right) q_{1}\left( t,x\right) \hspace{%
0.03cm}\mathrm{d}x\mathrm{d}y\mathrm{d}t+\int_{Q_{2}\times Y}g_{2}\left(
x\right) q_{2}\left( t,x\right) \hspace{0.03cm}\mathrm{d}t\mathrm{d}x\mathrm{%
d}y.%
\end{array}%
\end{equation*}

Now, taking any sequence $\left( q_{m}^{n}\right) _{n}$ converging to $p_{m}$
in the last identity and integrating by parts with respect to $t$ yield (\ref%
{e89}). The identity (\ref{e90}) goes along the same lines as that of (\ref%
{e89}).
\end{proof}

\begin{proposition}
\label{p1}$e_{y}(\mathbf{\hat{u}})$ is an admissible test function and the
sequence $[e($\textbf{$u$}$^{\varepsilon })\left( x\right) -e($\textbf{$u$}$%
)\left( x\right) -e_{y}(\mathbf{\hat{u}})\left( x,\frac{x}{\varepsilon }%
\right) ]$ converges strongly to $0$ in $L^{2}\left( \Omega \right)
^{3\times 3}$.
\end{proposition}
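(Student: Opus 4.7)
The plan is to proceed in two steps: first justify admissibility of $e_y(\mathbf{\hat{u}})$ as a test function, then establish the strong convergence via coercivity of the elasticity tensor combined with an energy identity.

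For admissibility, recall from Lemma \ref{l3} that $\mathbf{\hat{u}}(t,x,y)=e_{ij}(\mathbf{u})(t,x)\mathbf{w}^{ij}(y)$, so $e_y(\mathbf{\hat{u}})(t,x,y)=e_{ij}(\mathbf{u})(t,x)\,e_y(\mathbf{w}^{ij})(y)$. Since $e_y(\mathbf{w}^{ij})\in L_{\#}^{2}(Y)$ only, I would approximate each $\mathbf{w}^{ij}$ by a sequence $\mathbf{w}^{ij,n}\in \mathcal{C}_{\#}^{\infty}(Y)^{3}$ in the $H_{\#}^{1}(Y)$-norm, prove the corrector statement with $e_y(\mathbf{\hat{u}})$ replaced by its smooth truncation, and then close the argument by a diagonal extraction using density and the a priori bound on $\|e(\mathbf{u}^{\varepsilon})\|_{L^{2}}$.

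For the strong convergence, set $E^{\varepsilon}(t,x):=e(\mathbf{u}^{\varepsilon})(t,x)-e(\mathbf{u})(t,x)-e_{y}(\mathbf{\hat{u}})(t,x,x/\varepsilon)$. Coercivity of $\mathbf{A}$ on symmetric tensors gives $\alpha\|E^{\varepsilon}\|_{L^{2}(Q)^{3\times 3}}^{2}\le \int_{Q}\mathbf{A}^{\varepsilon}E^{\varepsilon}:E^{\varepsilon}\hspace{0.03cm}\mathrm{d}t\mathrm{d}x$. Expanding the right-hand side produces three pieces:
\begin{equation*}
\int_{Q}\mathbf{A}^{\varepsilon}e(\mathbf{u}^{\varepsilon}):e(\mathbf{u}^{\varepsilon})\,-\,2\int_{Q}\mathbf{A}^{\varepsilon}e(\mathbf{u}^{\varepsilon}):\bigl[e(\mathbf{u})+e_{y}(\mathbf{\hat{u}})^{\varepsilon}\bigr]\,+\,\int_{Q}\mathbf{A}^{\varepsilon}\bigl[e(\mathbf{u})+e_{y}(\mathbf{\hat{u}})^{\varepsilon}\bigr]^{\otimes 2}.
\end{equation*}
The second and third integrals pass to the limit by the two-scale convergence \eqref{e55} and Lemma \ref{l1}, and both yield $\int_{Q\times Y}\mathbf{A}(y)[e(\mathbf{u})+e_{y}(\mathbf{\hat{u}})]:[e(\mathbf{u})+e_{y}(\mathbf{\hat{u}})]\hspace{0.03cm}\mathrm{d}t\mathrm{d}x\mathrm{d}y$, up to the factor $-2$ and $+1$ respectively. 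The issue is therefore entirely concentrated in the first integral, which does not converge by two-scale arguments alone because both factors are only weakly convergent.

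To identify $\lim_{\varepsilon}\int_{Q}\mathbf{A}^{\varepsilon}e(\mathbf{u}^{\varepsilon}):e(\mathbf{u}^{\varepsilon})$, I would test the weak momentum equation \eqref{e36} at time $t$ against $\mathbf{v}=\mathbf{u}^{\varepsilon}(t,\cdot)$ and integrate in $t$, obtaining
\begin{equation*}
\int_{Q}\mathbf{A}^{\varepsilon}e(\mathbf{u}^{\varepsilon}):e(\mathbf{u}^{\varepsilon})\hspace{0.03cm}\mathrm{d}t\mathrm{d}x=\int_{Q}\mathbf{f}^{\varepsilon}\cdot\mathbf{u}^{\varepsilon}\hspace{0.03cm}\mathrm{d}t\mathrm{d}x-\sum_{m}\int_{Q_{m}^{\varepsilon}}(\beta_{m}\nabla p_{m}^{\varepsilon}+\gamma_{m}\nabla\theta_{m}^{\varepsilon})\cdot\mathbf{u}^{\varepsilon}\hspace{0.03cm}\mathrm{d}t\mathrm{d}x.
\end{equation*}
The source term passes to $\int_{Q}\mathbf{f}\cdot\mathbf{u}\hspace{0.03cm}\mathrm{d}t\mathrm{d}x$ by Remark \ref{r1}. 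For the coupling terms I would integrate by parts in space (using the zero boundary trace of $p_{1}^{\varepsilon}$ and the interface jump on $\Sigma^{\varepsilon}$) to convert $\int\nabla p_{m}^{\varepsilon}\cdot\mathbf{u}^{\varepsilon}$ into $-\int p_{m}^{\varepsilon}\,\mathrm{div}\mathbf{u}^{\varepsilon}$, and similarly for $\theta_{m}^{\varepsilon}$. The limits of these products are not provided by two-scale convergence directly, but they are precisely what the energy identities \eqref{e89} and \eqref{e90} compute: choosing test functions $p_{m}$ and $\theta_{m}$ in those identities isolates $\int_{Q}\beta_{m}p_{m}\,\mathcal{C}_{m}:e(\mathbf{u})$ and $\int_{Q}\gamma_{m}\theta_{m}\,\mathcal{C}_{m}:e(\mathbf{u})$ in terms of the (known) limits of $\int\mathcal{K}_{m}\nabla p_{m}\cdot\nabla p_{m}$, $\int\mathcal{L}_{m}\nabla\theta_{m}\cdot\nabla\theta_{m}$, the exchange terms, and the sources. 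Substituting these back and using the homogenized balance \eqref{e73} tested against $\mathbf{u}$, all contributions regroup so that the three expanded pieces cancel algebraically and $\lim_{\varepsilon}\int_{Q}\mathbf{A}^{\varepsilon}E^{\varepsilon}:E^{\varepsilon}=0$, yielding strong convergence of $E^{\varepsilon}$ and simultaneously the energy-limit formula stated in the theorem.

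The main obstacle is the bookkeeping in this last step: because $\mathbf{u}^{\varepsilon}$ is only weakly convergent and is multiplicatively coupled to $\nabla p_{m}^{\varepsilon}$ and $\nabla\theta_{m}^{\varepsilon}$, one cannot use products of weak limits. The trick is the double use of the energy identities \eqref{e89}--\eqref{e90}, which transfers the time derivative off the storage terms onto the test functions and effectively plays the role of the classical Tartar energy method in the elastic case. The corresponding statements for $\nabla p_{m}^{\varepsilon}-\nabla p_{m}-\nabla_{y}\hat{p}_{m}(\cdot,\cdot/\varepsilon)$ and the thermal analogue follow by the same scheme, applied to \eqref{e37} and \eqref{e38} respectively with test functions $p_{m}^{\varepsilon}$ and $\theta_{m}^{\varepsilon}$.
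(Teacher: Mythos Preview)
Your overall architecture (coercivity of $\mathbf{A}$ and expansion of the quadratic form in $E^{\varepsilon}$) matches the paper, and the treatment of the second and third integrals via two-scale convergence and Lemma~\ref{l1} is fine. The gap is in your handling of the first integral, more precisely of the coupling products $\int_{Q_{m}^{\varepsilon}}p_{m}^{\varepsilon}\,\mathrm{div}\,\mathbf{u}^{\varepsilon}$ and $\int_{Q_{m}^{\varepsilon}}\theta_{m}^{\varepsilon}\,\mathrm{div}\,\mathbf{u}^{\varepsilon}$. You propose to identify these limits through the identities \eqref{e89}--\eqref{e90}, but those are relations among the \emph{homogenized} quantities $(\mathbf{u},p_{m},\theta_{m},\hat{\mathbf{u}},\hat{p}_{m})$; they are obtained by first passing to the limit in linear test-function pairings and then specializing $q_{m}=p_{m}$. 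They carry no information about the $\varepsilon$-level nonlinear products you need, so the algebraic cancellation you describe cannot be carried out.

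The paper closes this step much more simply: after integrating by parts as you do, one uses that $p_{m}^{\varepsilon}\to p_{m}$ and $\theta_{m}^{\varepsilon}\to\theta_{m}$ \emph{strongly} in $L^{2}$ (Rellich compactness via a uniform extension operator on $\Omega_{m}^{\varepsilon}$; cf.\ the argument in Lemma~\ref{l9}), while $\chi_{m}^{\varepsilon}\,\mathrm{div}\,\mathbf{u}^{\varepsilon}$ converges weakly to $\int_{Y_{m}}(\mathrm{div}\,\mathbf{u}+\mathrm{div}_{y}\hat{\mathbf{u}})\,\mathrm{d}y$. Strong\,$\times$\,weak then gives the limit of each coupling term directly, and comparison with the homogenized balance \eqref{e79} yields $\lim_{\varepsilon}\int_{Q}\mathbf{A}^{\varepsilon}E^{\varepsilon}{:}E^{\varepsilon}=0$. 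On admissibility: the paper avoids your density/diagonal extraction by invoking elliptic regularity for the cell problem \eqref{e41}, which makes $e_{y}(\mathbf{w}^{ij})$ bounded and hence $e_{y}(\hat{\mathbf{u}})(x,\cdot)$ an admissible oscillating test function in the sense of Lemma~\ref{l1}. Your approximation route is workable but unnecessary here.
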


\begin{proof}
We argue as in \cite{all}. From (\ref{e41}) -(\ref{e60}) and standard
results on regularity of elliptic equations \cite{eva}, $e_{y}(\mathbf{\hat{u%
}})$ is an admissible test function. Applying (\ref{e36}) yields for a.e. $%
t\in \left( 0,T\right) $
\begin{eqnarray}
&&\int_{\Omega }\left\{ \mathbf{A}\left( \frac{x}{\varepsilon }\right) [e(%
\mathbf{u}^{\varepsilon })\left( x\right) -e(\mathbf{u})\left( x\right)
-e_{y}(\mathbf{\hat{u}})\left( x,\frac{x}{\varepsilon }\right) ]\times
\right.  \notag \\
&&\left. [e(\mathbf{u}^{\varepsilon })\left( x\right) -e(\mathbf{u})\left(
x\right) -e_{y}(\mathbf{\hat{u}})\left( x,\frac{x}{\varepsilon }\right)
]\right\} \hspace{0.03cm}\mathrm{d}x=  \notag \\
&&-\int_{\Omega }\left( \mathbf{A}\mathbb{+}^{t}\mathbf{A}\right) e(\mathbf{u%
}^{\varepsilon })[e(\mathbf{u})+e_{y}(\mathbf{\hat{u}})\left( x,\frac{x}{%
\varepsilon }\right) ]\hspace{0.03cm}\mathrm{d}x  \notag \\
&&+\int_{\Omega }\mathbf{A}[e(\mathbf{u})+e_{y}(\mathbf{\hat{u}})\left( x,%
\frac{x}{\varepsilon }\right) ][e(\mathbf{u})+e_{y}(\mathbf{\hat{u}})\left(
x,\frac{x}{\varepsilon }\right) ]\hspace{0.03cm}\mathrm{d}x  \notag \\
&&\int_{\Omega }\mathbf{F}_{1}\mathbf{u}^{\varepsilon }\hspace{0.03cm}%
\mathrm{d}x-\int_{\Omega }\left( \beta _{1}p_{m}^{\varepsilon }+\gamma
_{1}\theta _{1}^{\varepsilon }\right) \mathrm{div}\mathbf{u}^{\varepsilon })%
\hspace{0.03cm}\mathrm{d}x  \notag \\
&&\int_{\Omega }\mathbf{F}_{2}\mathbf{u}^{\varepsilon }\hspace{0.03cm}%
\mathrm{d}x-\int_{\Omega }\left( \beta _{2}p_{2}^{\varepsilon }+\gamma
_{2}\theta _{2}^{\varepsilon }\right) \mathrm{div}\mathbf{u}^{\varepsilon })%
\hspace{0.03cm}\mathrm{d}x.  \label{e91}
\end{eqnarray}%
Using the strong convergences in $L^{2}\left( \Omega \right) $ of \textbf{$u$%
}$^{\varepsilon }$, $p_{m}^{\varepsilon }$ and $\theta _{m}^{\varepsilon }$
to \textbf{$u$}, $p_{m}$ and $\theta _{m}$ respectively and the weak
convergence of $\chi _{m}^{\varepsilon }$\textrm{$div$}$($\textbf{$u$}$%
^{\varepsilon })$ to $\int_{Y_{m}}\left( \mathrm{div}\mathbf{u+}\mathrm{div}%
_{y}\mathbf{\hat{u}}\right) $ and taking the limit in the last term of the
r.h.s. of (\ref{e91}) we obtain
\begin{eqnarray}
&&\lim_{\varepsilon \rightarrow 0}\int_{\Omega }\mathbf{F}_{m}\mathbf{u}%
^{\varepsilon }\hspace{0.03cm}\mathrm{d}x-\int_{\Omega }\left( \beta
_{m}p_{m}^{\varepsilon }+\gamma _{m}\theta _{m}^{\varepsilon }\right)
\mathrm{div}\mathbf{u}^{\varepsilon })\hspace{0.03cm}\mathrm{d}x  \notag \\
&=&\int_{\Omega }\mathbf{F}_{m}\mathbf{u}-\int_{\Omega }\left( \beta
_{m}p_{m}+\gamma _{m}\theta _{m}\right) \int_{Y_{m}}\left( \mathrm{div}%
\mathbf{u+}\mathrm{div}_{y}\mathbf{\hat{u}}\right) .  \label{e92}
\end{eqnarray}%
Now, as $\left( x,y\right) \longmapsto e_{y}(\mathbf{\hat{u}})\left(
x,y\right) $ is an admissible test function, the first two terms of the
right hand side of (\ref{e91}) converges to
\begin{eqnarray}
&&-\int_{\Omega \times Y}\left( a\mathbb{+}^{t}a\right) \left[ e(\mathbf{u}%
)+e_{y}(\mathbf{\hat{u}})\left( x,y\right) \right] [e(\mathbf{u})+e_{y}(%
\mathbf{\hat{u}})\left( x,y\right) ]\hspace{0.03cm}\mathrm{d}x\mathrm{d}y
\notag \\
&&+\int_{\Omega \times Y}a[e(\mathbf{u})+e_{y}(\mathbf{\hat{u}})\left(
x,y\right) ][e(\mathbf{u})+e_{y}(\mathbf{\hat{u}})\left( x,y\right) ]\hspace{%
0.03cm}\mathrm{d}x\mathrm{d}y  \notag \\
&=&-\int_{\Omega \times Y}a[e(\mathbf{u})+e_{y}(\mathbf{\hat{u}})\left(
x,y\right) ][e(\mathbf{u})+e_{y}(\mathbf{\hat{u}})\left( x,y\right) ]\hspace{%
0.03cm}\mathrm{d}x\mathrm{d}y.  \label{e93}
\end{eqnarray}%
Finally, thanks to the coercivity of $\mathbf{A}$, (\ref{e74}) (with $%
\mathbf{\hat{v}=\hat{u}}$) and (\ref{e92})-(\ref{e93}) we find that
\begin{equation*}
\begin{array}{l}
\alpha _{0}\left\Vert e(\mathbf{u}^{\varepsilon })\left( x\right) -e(\mathbf{%
u})\left( x\right) -e_{y}(\mathbf{\hat{u}})\left( x,\frac{x}{\varepsilon }%
\right) \right\Vert _{L^{2}\left( \Omega \right) ^{3\times 3}}\leq \\
\  \\
\int_{\Omega }\mathbf{A}\left( \frac{x}{\varepsilon }\right) [e(\mathbf{u}%
^{\varepsilon })\left( x\right) -e(\mathbf{u})\left( x\right) -e_{y}(\mathbf{%
\hat{u}})\left( x,\frac{x}{\varepsilon }\right) ][e(\mathbf{u}^{\varepsilon
})\left( x\right) -e(\mathbf{u})\left( x\right) -e_{y}(\mathbf{\hat{u}}%
)\left( x,\frac{x}{\varepsilon }\right) ]\rightarrow \\
\\
\sum_{m}\left\{ \int_{\Omega }\mathbf{F}_{m}\mathbf{u}-\int_{\Omega }\left(
\beta _{m}p_{m}+\gamma _{m}\theta _{m}\right) \int_{Y_{m}}\left( \mathrm{div}%
\mathbf{u+}\mathrm{div}_{y}\mathbf{\hat{u}}\right) \right\} \\
\\
-\int_{\Omega }\mathbf{A}[e(\mathbf{u})+e_{y}(\mathbf{\hat{u}})\left(
x,y\right) ][e(\mathbf{u})+e_{y}(\mathbf{\hat{u}})\left( x,y\right) ]=0%
\end{array}%
\end{equation*}%
where $\alpha _{0}=\min_{y\in \overline{Y}}\mathbf{A}\left( y\right) $.
Hence the proposition is proved.
\end{proof}

We now establish some corrector results on the mass conservation equation.
Let us first give some technical results.

\begin{lemma}[{G. Allaire and F. Murat \cite[Lemma A.4]{am}}]
There exists a constant $C>0$ such that for all $q_{1}\in H_{0}^{1}\left(
\Omega \right) \cap H^{1}\left( \Omega _{1}^{\varepsilon }\right) $ we have%
\begin{equation}
\left\Vert q_{1}\right\Vert _{0,\Omega _{1}^{\epsilon }}\leq C\left\Vert
\nabla q_{1}\right\Vert _{0,\Omega _{1}^{\epsilon }}.  \label{e94}
\end{equation}
\end{lemma}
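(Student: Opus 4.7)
The plan is to reduce the perforated-domain Poincar\'{e} inequality on $\Omega_1^\varepsilon$ to the standard Poincar\'{e} inequality on $\Omega$ by means of a uniformly bounded extension operator. Specifically, I would construct a linear operator $P_\varepsilon \colon H_\Gamma^1(\Omega_1^\varepsilon) \to H_0^1(\Omega)$ such that for all $q_1 \in H_\Gamma^1(\Omega_1^\varepsilon)$
\begin{equation*}
P_\varepsilon q_1 = q_1 \text{ in } \Omega_1^\varepsilon, \qquad
\left\Vert P_\varepsilon q_1 \right\Vert_{0,\Omega} \leq C \left\Vert q_1 \right\Vert_{0,\Omega_1^\varepsilon}, \qquad
\left\Vert \nabla P_\varepsilon q_1 \right\Vert_{0,\Omega} \leq C \left\Vert \nabla q_1 \right\Vert_{0,\Omega_1^\varepsilon},
\end{equation*}
with $C$ independent of $\varepsilon$. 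Granting this, the ordinary Poincar\'{e} inequality on $\Omega$ applied to $P_\varepsilon q_1 \in H_0^1(\Omega)$ gives
\begin{equation*}
\left\Vert q_1 \right\Vert_{0,\Omega_1^\varepsilon}
\leq \left\Vert P_\varepsilon q_1 \right\Vert_{0,\Omega}
\leq C_\Omega \left\Vert \nabla P_\varepsilon q_1 \right\Vert_{0,\Omega}
\leq C \left\Vert \nabla q_1 \right\Vert_{0,\Omega_1^\varepsilon},
\end{equation*}
which is exactly (\ref{e94}).

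The construction of $P_\varepsilon$ is carried out by a cell-by-cell procedure based on a fixed reference operator. Since $Y_1$ is a connected Lipschitz subdomain of $Y$ with $\bar{Y}_2 \subset Y$, standard Sobolev extension theory provides a bounded linear operator $P \colon H^1(Y_1) \to H^1(Y)$ satisfying $Pq=q$ on $Y_1$ together with $\|Pq\|_{H^1(Y)} \leq c_0 \|q\|_{H^1(Y_1)}$ and, crucially (by subtracting the mean on $Y_1$ before extending and adding it back), the homogeneous estimate $\|\nabla(Pq)\|_{L^2(Y)} \leq c_0 \|\nabla q\|_{L^2(Y_1)}$. For each multi-index $k$ such that $\varepsilon(k+Y) \subset \Omega$, I set $P_\varepsilon q_1(x) := (P q_1^{(k)})(x/\varepsilon - k)$ in $\varepsilon(k+Y)$, where $q_1^{(k)}$ denotes the restriction of $q_1$ to $\varepsilon(k+Y_1)$ pulled back to $Y_1$. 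For cells that meet $\partial\Omega$, I exploit the hypothesis $\bar{\Omega}_2^\varepsilon \subset \Omega$: near $\Gamma$ the cell intersects only the matrix $\Omega_1^\varepsilon$, so $q_1$ is already defined there and no extension is needed. The scaling $x \mapsto x/\varepsilon$ turns the reference estimate into $\|\nabla P_\varepsilon q_1\|_{L^2(\varepsilon(k+Y))} \leq c_0 \|\nabla q_1\|_{L^2(\varepsilon(k+Y_1))}$, and summing over $k$ gives the desired uniform bound. The zero trace on $\Gamma$ is preserved because the trace is assigned in boundary cells where the extension is the identity.

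The main obstacle is verifying that the cell-by-cell definition yields a globally $H^1$ function, i.e. that the extensions match across neighboring cell boundaries $\partial(\varepsilon(k+Y))$. This is ensured by the observation that, since $\bar{Y}_2 \subset Y$, a neighborhood of $\partial Y$ lies entirely in $Y_1$; hence on $\partial Y$ the extension $Pq$ simply reproduces $q$ itself, so the cell-wise extensions agree on the shared faces where the function values come from the original $q_1$. The $L^2$ bound for $P_\varepsilon q_1$ follows from the reference bound $\|Pq\|_{L^2(Y)} \leq c_0 \|q\|_{L^2(Y_1)}$ by the same scaling and summation argument. This construction is exactly the one used in Allaire--Murat; once $P_\varepsilon$ is in hand, the desired inequality (\ref{e94}) is just one line, as displayed above.
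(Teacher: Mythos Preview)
The paper does not supply a proof of this lemma; it is stated with a citation to Allaire--Murat \cite[Lemma~A.4]{am} and used as a black box. Your proposal therefore goes beyond what the paper does, and the approach you outline---building a uniform extension operator $P_\varepsilon$ cell by cell from a fixed reference extension $P\colon H^{1}(Y_{1})\to H^{1}(Y)$, then invoking the ordinary Poincar\'e inequality on $\Omega$---is precisely the construction carried out in the cited Allaire--Murat paper. The key points (mean-subtraction to obtain the homogeneous gradient estimate, matching across cell faces thanks to $\bar Y_{2}\subset Y$, and the treatment of boundary cells where $\Omega_{1}^{\varepsilon}$ fills the whole cell) are all correctly identified. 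In short, your plan is correct and coincides with the original source the paper defers to.
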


\begin{lemma}
There exists a constant $C>0$ such that for all $q\in H^{1}\left( \Omega
_{1}^{\varepsilon }\right) $ we have
\begin{equation}
\varepsilon \left\Vert q\right\Vert _{0,\Sigma ^{\varepsilon }}^{2}\leq
C\left( \varepsilon ^{2}\left\Vert \nabla q\right\Vert _{0,\Omega
_{1}^{\varepsilon }}^{2}+\left\Vert q\right\Vert _{0,\Omega
_{1}^{\varepsilon }}^{2}\right) ,  \label{e96}
\end{equation}%
and%
\begin{equation}
\varepsilon \left\Vert q\right\Vert _{0,\Sigma ^{\varepsilon }}^{2}\leq
C\left( \left\Vert \nabla q\right\Vert _{0,\Omega _{1}^{\varepsilon
}}^{2}\right) .  \label{e97}
\end{equation}
\end{lemma}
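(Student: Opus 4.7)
My plan is to derive both inequalities from a fixed reference-cell trace estimate by a standard $\varepsilon$-rescaling argument, and then to obtain the sharper bound (\ref{e97}) by feeding the Poincar\'e-type inequality (\ref{e94}) into (\ref{e96}).

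First I would recall the classical trace theorem on the unit cell: since $\Sigma = \partial Y_1 \cap \partial Y_2$ is a smooth surface and $\overline{Y}_2 \subset Y$, there exists $C_0 > 0$ such that
\begin{equation*}
\int_\Sigma |\phi|^2 \, \mathrm{d}s(y) \leq C_0 \left( \left\Vert \phi \right\Vert_{0,Y_1}^2 + \left\Vert \nabla_y \phi \right\Vert_{0,Y_1}^2 \right) \quad \forall \phi \in H^1(Y_1).
\end{equation*}
Then, for each $k \in \mathbb{Z}^3$ such that $\varepsilon(k+\overline Y) \subset \Omega$, I would apply this inequality to $\phi(y) := q(\varepsilon(k+y))$. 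The change of variables $x = \varepsilon(k+y)$ gives $\mathrm{d}x = \varepsilon^3 \mathrm{d}y$, $\mathrm{d}s^\varepsilon(x) = \varepsilon^2 \mathrm{d}s(y)$ and $\nabla_x q = \varepsilon^{-1} \nabla_y \phi$, and a brief computation then yields, cell by cell,
\begin{equation*}
\varepsilon \int_{\Sigma^\varepsilon \cap \varepsilon(k+\overline Y)} |q|^2 \, \mathrm{d}s^\varepsilon \leq C_0 \left( \left\Vert q \right\Vert_{0,\varepsilon(k+Y_1)}^2 + \varepsilon^2 \left\Vert \nabla q \right\Vert_{0,\varepsilon(k+Y_1)}^2 \right).
\end{equation*}
Summing over the admissible indices $k$ and invoking the standing assumption $\overline{\Omega}_2^\varepsilon \subset \Omega$ (which guarantees that every component of $\Sigma^\varepsilon$ lies strictly inside a fully contained translated cell) produces exactly (\ref{e96}).

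To deduce (\ref{e97}), I would restrict to the natural functional setting in which the lemma is used, namely $q \in H^1_\Gamma(\Omega_1^\varepsilon)$, and apply (\ref{e94}) to get $\left\Vert q \right\Vert_{0,\Omega_1^\varepsilon} \leq C \left\Vert \nabla q \right\Vert_{0,\Omega_1^\varepsilon}$. Substituting this into (\ref{e96}) absorbs the $L^2$-norm of $q$ into a multiple of the $L^2$-norm of $\nabla q$, and the remaining $\varepsilon^2$ is bounded by a fixed $\varepsilon_0^2$ since only $0 < \varepsilon \leq \varepsilon_0$ matters; combining constants yields (\ref{e97}).

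The main, though mild, obstacle is geometric bookkeeping: the cellwise rescaling only applies to $\varepsilon$-cells fully contained in $\Omega$, so one must rule out boundary contributions to $\Sigma^\varepsilon$. This is precisely what the assumption $\overline{\Omega}_2^\varepsilon \subset \Omega$ delivers: all interface pieces lie in interior cells, and the summation reproduces the full $\Sigma^\varepsilon$ and the full $\Omega_1^\varepsilon$ without leftover terms, so no boundary-layer correction is needed.
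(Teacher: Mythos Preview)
Your proposal is correct and follows essentially the same route as the paper: the reference-cell trace inequality, the $\varepsilon$-rescaling and cellwise summation to obtain (\ref{e96}), and then the Poincar\'e/Friedrichs inequality (\ref{e94}) combined with $\varepsilon\le\varepsilon_0$ to deduce (\ref{e97}). Your explicit remark that (\ref{e97}) requires $q\in H_\Gamma^1(\Omega_1^\varepsilon)$ (so that (\ref{e94}) applies) is in fact a useful clarification of the lemma's hypothesis.
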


\begin{proof}
We argue as in \cite{conc}. Using the trace theorem on $Y_{1}$ (see for e.g. R. A. Adams and J. F.
Fournier \cite{ada}), we know that there exists a constant $C\left(
Y_{1}\right) >0$ such that for every $\psi \in H^{1}\left( Y_{1}\right) $
\begin{equation*}
\int_{\Sigma }\left\vert \psi \right\vert ^{2}\hspace{0.03cm}\mathrm{d}%
\sigma \leq C\left( \int_{Y_{1}}\left\vert \nabla \psi \right\vert ^{2}%
\hspace{0.03cm}\mathrm{d}y+\int_{Y_{1}}\left\vert \psi \right\vert ^{2}%
\hspace{0.03cm}\mathrm{d}y\right) .
\end{equation*}%
Then, using the change of variables $y:=x/\varepsilon $ we have for every $%
q\in H^{1}\left( Y_{1}^{\epsilon k}\right) $
\begin{equation}
\varepsilon \int_{\Sigma ^{\epsilon k}}\left\vert q\right\vert ^{2}\hspace{%
0.03cm}\mathrm{d}\sigma ^{\varepsilon }\leq C\left( \varepsilon
^{2}\int_{Y_{1}^{\varepsilon k}}\left\vert \nabla q\right\vert ^{2}\hspace{%
0.03cm}\mathrm{d}x+\int_{Y_{1}^{\epsilon k}}\left\vert q\right\vert ^{2}%
\hspace{0.03cm}\mathrm{d}x\right)  \label{e98}
\end{equation}%
where $Y_{1}^{\epsilon k}=\varepsilon \left( k+\Omega _{1}^{\varepsilon
}\right) \ $and $\Sigma ^{\epsilon k}=\varepsilon \left( k+\Sigma \right) \
\ k\in \mathbb{Z}^{3}$. We mention that $C$ appearing in the inequality (\ref%
{e98}) is independent of $k\in \mathbb{Z}^{3}$. Summing up these
inequalities (\ref{e98}) over all $Y_{1}^{\epsilon k}$\ contained in $\Omega
$, we get (\ref{e91}). To obtain (\ref{e97}), it suffices to write (\ref{e96}%
)\ for sufficiently small $\varepsilon $, $\varepsilon \ll 1$ and use the
Friedrich inequality (\ref{e94}).
\end{proof}

\begin{lemma}
\label{l9} We have
\begin{eqnarray}
\lim_{\varepsilon \rightarrow 0}\int_{\Sigma _{T}^{\varepsilon }}\varepsilon
\varsigma (p_{m}^{\varepsilon }-p_{m})^{2}\hspace{0.03cm}\mathrm{d}%
s^{\varepsilon }(x) &=&0,  \label{e99} \\
\lim_{\varepsilon \rightarrow 0}\int_{\Sigma _{T}^{\varepsilon }}\varepsilon
\varsigma \left( p_{1}-p_{2}\right) ^{2}\hspace{0.03cm}\mathrm{d}%
s^{\varepsilon }(x) &=&\int_{Q}\widetilde{\zeta }\left( p_{1}-p_{2}\right)
^{2}\hspace{0.03cm}\mathrm{d}x.  \label{e100} \\
\lim_{\varepsilon \rightarrow 0}\int_{\Sigma _{T}^{\varepsilon }}\varepsilon
\varsigma (p_{1}^{\varepsilon }-p_{2}^{\varepsilon })^{2}\hspace{0.03cm}%
\mathrm{d}s^{\varepsilon }(x) &=&\int_{Q}\widetilde{\zeta }\left(
p_{1}-p_{2}\right) ^{2}\hspace{0.03cm}\mathrm{d}x.  \label{e101}
\end{eqnarray}
\end{lemma}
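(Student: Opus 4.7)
The plan is to reduce all three limits to applications of the trace inequalities (\ref{e96})--(\ref{e97}), Corollary \ref{c1}, and the strong $L^{2}$ convergence $p_{m}^{\varepsilon}\to p_{m}$ in $L^{2}(Q_{m}^{\varepsilon})$ that follows from the two-scale convergence (\ref{e53}) (whose limit is independent of $y$ on $Y_{m}$) combined with the uniform $H^{1}$ bound (\ref{e40}), using a standard uniform extension from $H^{1}(\Omega_{m}^{\varepsilon})$ into $H^{1}(\Omega)$ and an Aubin--Lions argument (with time derivatives controlled by (\ref{e37})).

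For (\ref{e99}) I would apply the trace inequality (\ref{e96}) pointwise in $t$ to $q=p_{m}^{\varepsilon}(t,\cdot)-p_{m}(t,\cdot)$ and integrate over $(0,T)$:
$$\int_{\Sigma _{T}^{\varepsilon}}\varepsilon\varsigma(p_{m}^{\varepsilon}-p_{m})^{2}\,\mathrm{d}s^{\varepsilon}(x)\mathrm{d}t\;\leq\; C\Bigl(\varepsilon^{2}\|\nabla(p_{m}^{\varepsilon}-p_{m})\|_{L^{2}(Q_{m}^{\varepsilon})}^{2}+\|p_{m}^{\varepsilon}-p_{m}\|_{L^{2}(Q_{m}^{\varepsilon})}^{2}\Bigr).$$
By (\ref{e40}) the first bracket is $O(\varepsilon^{2})$, while the second tends to $0$ by the strong $L^{2}$ convergence recalled above, giving (\ref{e99}).

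For (\ref{e100}) I would apply Corollary \ref{c1} slice by slice in $t$: since $p_{1}-p_{2}\in L_{T}^{\infty}(H^{1}(\Omega))$, the function $(p_{1}-p_{2})^{2}(t,\cdot)$ lies in $W^{1,1}(\Omega)$ for a.e. $t$ and is an admissible test function (Corollary \ref{c1} extends by density in $H^{1}(\Omega)$ since $\varsigma\in L^{\infty}$ and $\sqrt{\varepsilon}\|1\|_{L^{2}(\Sigma^{\varepsilon})}$ is uniformly bounded). Applying it with $v=\varsigma$ and integrating in time via dominated convergence yields
$$\lim_{\varepsilon\to 0}\int_{\Sigma _{T}^{\varepsilon}}\varepsilon\varsigma(p_{1}-p_{2})^{2}\,\mathrm{d}s^{\varepsilon}(x)\mathrm{d}t=\int_{Q}\Bigl(\int_{\Sigma}\varsigma(y)\,\mathrm{d}s(y)\Bigr)(p_{1}-p_{2})^{2}\,\mathrm{d}x\mathrm{d}t=\int_{Q}\widetilde{\zeta}(p_{1}-p_{2})^{2}\,\mathrm{d}x\mathrm{d}t.$$

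For (\ref{e101}) I would set $\delta_{m}^{\varepsilon}:=p_{m}^{\varepsilon}-p_{m}$ and expand
$$(p_{1}^{\varepsilon}-p_{2}^{\varepsilon})^{2}=(p_{1}-p_{2})^{2}+2(p_{1}-p_{2})(\delta_{1}^{\varepsilon}-\delta_{2}^{\varepsilon})+(\delta_{1}^{\varepsilon}-\delta_{2}^{\varepsilon})^{2}.$$
After multiplying by $\varepsilon\varsigma$ and integrating over $\Sigma_{T}^{\varepsilon}$, the first term converges to the desired limit by (\ref{e100}); the last term vanishes by (\ref{e99}) applied to $m=1,2$; the cross term vanishes by Cauchy--Schwarz combined with (\ref{e99}) and the boundedness of $\varepsilon\int_{\Sigma_{T}^{\varepsilon}}\varsigma(p_{1}-p_{2})^{2}\mathrm{d}s^{\varepsilon}\mathrm{d}t$ obtained in (\ref{e100}). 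This proves (\ref{e101}). The main obstacle I anticipate is justifying the strong $L^{2}$ convergence used in (\ref{e99}); although it is classical, one must construct $\varepsilon$-uniform extension operators on the perforated domains $\Omega_{m}^{\varepsilon}$ (the case $m=2$ being the simpler, since $\overline{\Omega}_{2}^{\varepsilon}\subset\Omega$) and then invoke compactness in time via the $L^{2}(0,T;V_{m}^{\varepsilon\,\ast})$ bound on $\partial_{t}(\phi_{m}p_{m}^{\varepsilon}+\beta_{m}\mathrm{div}\,\mathbf{u}^{\varepsilon}+\alpha_{m}\theta_{m}^{\varepsilon})$.
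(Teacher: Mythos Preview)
Your proof is correct and follows essentially the same route as the paper: for (\ref{e99}) the trace estimate (\ref{e96}) together with the uniform gradient bound and strong $L^{2}$ convergence of $p_{m}^{\varepsilon}$; for (\ref{e100}) Corollary~\ref{c1}; and for (\ref{e101}) the algebraic expansion of the square plus Cauchy--Schwarz. The only difference is that you are more explicit about how the strong $L^{2}$ convergence is obtained (extension operators and time compactness via Aubin--Lions), whereas the paper simply invokes Rellich's theorem for $\chi_{m}^{\varepsilon}p_{m}^{\varepsilon}\to\chi_{m}p_{m}$ without further comment.
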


\begin{proof}
Using (\ref{e98}) yields
\begin{equation}
\varepsilon \left\Vert (p_{m}^{\varepsilon }-p_{m})\right\Vert _{0,\Sigma
^{\varepsilon }}^{2}\leq C\left( \varepsilon ^{2}\left\Vert \nabla
(p_{m}^{\varepsilon }-p_{m})\right\Vert _{0,\Omega _{m}^{\varepsilon
}}^{2}+\left\Vert p_{m}^{\varepsilon }-p_{m})\right\Vert _{0,\Omega
_{m}^{\varepsilon }}^{2}\right) .  \label{e102}
\end{equation}%
We know that $\left\Vert \nabla (p_{m}^{\varepsilon }-p_{m})\right\Vert
_{0,\Omega _{m}^{\varepsilon }}$ is uniformly bounded with respect to $%
\varepsilon $ and thanks to Rellich's Theorem, $\chi _{m}^{\varepsilon
}p_{m}^{\varepsilon }$ converges strongly to $\chi _{m}p_{m}$ in $%
L^{2}\left( \Omega \right) $. So, by passing to the limit in (\ref{e102}) we
easily arrive at (\ref{e99}). The convergence in (\ref{e100}) is a direct
consequence of Corollary \ref{c1}. To complete the proof it remains to show (%
\ref{e101}). Let us first observe that, according to Cauchy-Schwarz
inequality and (\ref{e99}), we have%
\begin{equation}
\left\vert \int_{\Sigma _{T}^{\varepsilon }}\varepsilon \varsigma
(p_{m}^{\varepsilon }-p_{m})\left( p_{1}-p_{2}\right) \hspace{0.03cm}\mathrm{%
d}s^{\varepsilon }(x)\right\vert \leq C\sqrt{\varepsilon }\left\Vert
p_{m}^{\varepsilon }-p_{m}\right\Vert _{0,\Sigma ^{\varepsilon }}\left\Vert
p_{1}-p_{2}\right\Vert _{0,\Sigma ^{\varepsilon }}\rightarrow 0\text{.}
\label{e103}
\end{equation}%
Hence, from (\ref{e99}), (\ref{e100}) and (\ref{e103}) we obtain
\begin{equation*}
\begin{array}{l}
\int_{0}^{T}\int_{\Sigma _{T}^{\varepsilon }}\varepsilon \varsigma
(p_{1}^{\varepsilon }-p_{2}^{\varepsilon })^{2}\hspace{0.03cm}\mathrm{d}%
s^{\varepsilon }(x)\mathrm{d}t=\int_{0}^{T}\int_{\Sigma ^{\varepsilon
}}\varepsilon \varsigma (p_{1}^{\varepsilon }-p_{1})^{2}\hspace{0.03cm}%
\mathrm{d}s^{\varepsilon }(x)\mathrm{d}t \\
\\
+\int_{\Sigma _{T}^{\varepsilon }}\varepsilon \varsigma (p_{2}^{\varepsilon
}-p_{2})^{2}\hspace{0.03cm}\mathrm{d}s^{\varepsilon }(x)\mathrm{d}%
t+2\int_{\Sigma _{T}^{\varepsilon }}\varepsilon \varsigma
(p_{1}^{\varepsilon }-p_{1})\left( p_{1}-p_{2}\right) \hspace{0.03cm}\mathrm{%
d}s^{\varepsilon }(x)\mathrm{d}t \\
\\
+2\int_{\Sigma _{T}^{\varepsilon }}\varepsilon \varsigma (p_{2}^{\varepsilon
}-p_{2})\left( p_{1}-p_{2}\right) \hspace{0.03cm}\mathrm{d}s^{\varepsilon
}(x)\mathrm{d}t \\
\\
+\int_{\Sigma _{T}^{\varepsilon }}\varepsilon \varsigma \left(
p_{1}-p_{2}\right) ^{2}\hspace{0.03cm}\mathrm{d}s^{\varepsilon }(x)\mathrm{d}%
s^{\varepsilon }(x)\mathrm{d}t \\
\\
\rightarrow \int_{Q}\widetilde{\zeta }\left( p_{1}-p_{2}\right) ^{2}\mathrm{d%
}x.%
\end{array}%
\end{equation*}%
Hence (\ref{e101}).
\end{proof}

\begin{proposition}
We have
\begin{equation*}
\lim_{\varepsilon \rightarrow 0}\left\Vert \chi _{m}\left( \frac{x}{%
\varepsilon }\right) \left\{ \nabla p_{m}^{\varepsilon }\left( t,x\right)
-\nabla p_{m}\left( t,x\right) -\nabla _{y}\hat{p}_{m}\left( t,x,\frac{x}{%
\varepsilon }\right) \right\} \right\Vert _{0,Q_{T}}=0.
\end{equation*}
\end{proposition}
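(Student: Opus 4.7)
The plan is to adapt the strategy of Proposition \ref{p1} to the pressure sector, with the macroscopic energy identity (\ref{e89}) playing the role that (\ref{e74}) played for the displacement. First, one checks that $(x,y)\mapsto \nabla_y \hat{p}_m(t,x,y) = (\nabla_y \pi_m^i)(y)\,\partial_{x_i} p_m(t,x)$ is an admissible test function in the sense of Lemma \ref{l1}: the cell problem (\ref{e42}) has constant coefficients on a domain with smooth interface, so elliptic regularity yields $\pi_m^i \in C^\infty(\overline{Y_m})$, and a density argument in the slow variable (smoothing $p_m$ and passing to the limit as in the proof of Proposition \ref{p1}) handles the macroscopic dependence.

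Next, introduce the nonnegative quantity
\[
X_\varepsilon := \sum_m \int_{Q_m^\varepsilon} \kappa_m \bigl|\nabla p_m^\varepsilon - \nabla p_m - \nabla_y \hat{p}_m(t,x,\tfrac{x}{\varepsilon})\bigr|^2 \mathrm{d}x\mathrm{d}t + \int_{\Sigma_T^\varepsilon} \varepsilon \varsigma \bigl[(p_1^\varepsilon - p_2^\varepsilon) - (p_1 - p_2)\bigr]^2 \mathrm{d}s^\varepsilon(x)\mathrm{d}t .
\]
By positivity of $\kappa_m$ and $\varsigma$, $X_\varepsilon$ controls from above (up to a multiplicative constant) the squared $L^2(Q_T)$-norm appearing in the statement, so it suffices to show that $X_\varepsilon \to 0$.

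Expand $X_\varepsilon$ into three blocks: (i) the purely microscopic energy $\sum_m \int_{Q_m^\varepsilon} \kappa_m |\nabla p_m^\varepsilon|^2 + \int_{\Sigma_T^\varepsilon} \varepsilon \varsigma (p_1^\varepsilon - p_2^\varepsilon)^2$; (ii) the cross term $-2\sum_m \int_{Q_m^\varepsilon} \kappa_m \nabla p_m^\varepsilon \cdot (\nabla p_m + \nabla_y \hat{p}_m)(t,x,\tfrac{x}{\varepsilon}) - 2\int_{\Sigma_T^\varepsilon}\varepsilon \varsigma (p_1^\varepsilon - p_2^\varepsilon)(p_1 - p_2)$; and (iii) the quadratic test-function term $\sum_m \int_{Q_m^\varepsilon} \kappa_m |\nabla p_m + \nabla_y \hat{p}_m|^2(t,x,\tfrac{x}{\varepsilon}) + \int_{\Sigma_T^\varepsilon} \varepsilon \varsigma (p_1 - p_2)^2$. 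Block (i) is identified by choosing $q_m = p_m^\varepsilon$ in the microscopic weak formulation (\ref{e37}), integrating in $t\in(0,T)$, and isolating the flux-plus-interface contribution; passing to the two-scale limit reproduces precisely identity (\ref{e89}), from which the limit of block (i) reads
\[
A + B \,:=\, \sum_m \int_{Q\times Y_m} \kappa_m \bigl|\nabla p_m + \nabla_y \hat{p}_m\bigr|^2 \mathrm{d}t\mathrm{d}x\mathrm{d}y + \int_Q \zeta^{\ast} (p_1 - p_2)^2 \mathrm{d}t\mathrm{d}x .
\]
Block (ii) converges to $-2(A+B)$: its bulk part by the two-scale convergence (\ref{e56}) tested against the admissible function $\nabla p_m + \nabla_y \hat{p}_m$, and its boundary part by combining (\ref{e58}) with the strong $L^2$-convergence of $\chi_m^\varepsilon p_m^\varepsilon$, exactly as in Lemma \ref{l9}. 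Block (iii) converges to $A+B$ by Lemma \ref{l1} and Corollary \ref{c1}. Summing the three limits, the algebraic cancellation $(A+B) - 2(A+B) + (A+B) = 0$ yields $X_\varepsilon \to 0$, and the coercivity of $\kappa_m$ delivers the claim.

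The main obstacle is the admissibility step, since a direct appeal to Lemma \ref{l1} requires the composed function $(t,x)\mapsto \nabla_y \hat{p}_m(t,x,x/\varepsilon)$ to be built from factors that are continuous in $y$ (supplied by the smoothness of $\pi_m^i$) and sufficiently regular in $x$ (which membership in $L_T^2(H_0^1(\Omega))$ does not automatically provide). One bypasses this by approximating $p_m$ by a smoother sequence and controlling the remainder uniformly in $\varepsilon$, exactly the device invoked implicitly in the proof of Proposition \ref{p1} and spelled out in \cite{all}. All remaining passages to the limit are routine consequences of the compactness results collected in Lemma \ref{l2}.
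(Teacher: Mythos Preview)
Your proof is correct and follows essentially the same route as the paper: expand the square, identify the quadratic-in-$\varepsilon$ energy via the weak formulation (\ref{e37}) with $q_m=p_m^\varepsilon$ and the integral identity (\ref{e89}), pass the cross terms by two-scale convergence against the admissible corrector, and handle the remaining test-function block by Lemma~\ref{l1} and Corollary~\ref{c1}. The only cosmetic difference is that you absorb the interface contribution into $X_\varepsilon$ from the start, producing the clean $(A+B)-2(A+B)+(A+B)=0$ cancellation, whereas the paper expands only the bulk part and treats the surface term separately through Lemma~\ref{l9}.
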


\begin{proof}
As in the proof of Proposition \ref{p1}, we first write that%
\begin{eqnarray}
&&\int_{Q_{1}^{\varepsilon }}\mathcal{K}_{1}\left( \frac{x}{\varepsilon }%
\right) \left\vert \nabla p_{1}^{\varepsilon }\left( t,x\right) -\nabla
p_{1}\left( t,x\right) -\nabla _{y}\hat{p}_{1}\left( t,x,\frac{x}{%
\varepsilon }\right) \right\vert ^{2}\hspace{0.03cm}\mathrm{d}t\mathrm{d}x
\notag \\
&&+\int_{Q_{2}^{\varepsilon }}\mathcal{K}_{2}\frac{x}{\varepsilon }%
\left\vert \nabla p_{2}^{\varepsilon }\left( t,x\right) -\nabla p_{2}\left(
t,x\right) -\nabla _{y}\hat{p}_{2}\left( t,x,\frac{x}{\varepsilon }\right)
\right\vert ^{2}\hspace{0.03cm}\mathrm{d}t\mathrm{d}x  \notag \\
&=&\int_{Q_{1}^{\varepsilon }}\mathcal{K}_{1}\left( \frac{x}{\varepsilon }%
\right) \left\vert \nabla p_{1}^{\varepsilon }\left( t,x\right) \right\vert
^{2}\hspace{0.03cm}\mathrm{d}t\mathrm{d}x+\int_{Q_{1}^{\varepsilon }}%
\mathcal{K}_{m}\left( \frac{x}{\varepsilon }\right) \left\vert \nabla
p_{1}^{\varepsilon }\left( t,x\right) \right\vert ^{2}\hspace{0.03cm}\mathrm{%
d}t\mathrm{d}x  \notag \\
&&-2\int_{Q_{1}^{\varepsilon }}\mathcal{K}_{1}\left( \frac{x}{\varepsilon }%
\right) \nabla p_{1}^{\varepsilon }\left( t,x\right) \left( \nabla
p_{1}\left( t,x\right) +\nabla _{y}\hat{p}_{1}\left( t,x\right) \right)
\hspace{0.03cm}\mathrm{d}t\mathrm{d}x  \notag \\
&&-2\int_{Q_{2}^{\varepsilon }}\mathcal{K}_{2}\left( \frac{x}{\varepsilon }%
\right) \nabla p_{2}^{\varepsilon }\left( t,x\right) \left( \nabla
p_{2}\left( t,x\right) +\nabla _{y}\hat{p}_{2}\left( t,x,\frac{x}{%
\varepsilon }\right) \right) \hspace{0.03cm}\mathrm{d}t\mathrm{d}x  \notag \\
&&\int_{Q_{1}^{\varepsilon }}\mathcal{K}_{1}\left( \frac{x}{\varepsilon }%
\right) \left\vert \nabla p_{1}\left( t,x\right) +\nabla _{y}\hat{p}%
_{1}\left( t,x,\frac{x}{\varepsilon }\right) \right\vert ^{2}\hspace{0.03cm}%
\mathrm{d}t\mathrm{d}x  \notag \\
&&+\int_{Q_{2}^{\varepsilon }}\mathcal{K}_{2}\left( \frac{x}{\varepsilon }%
\right) \left\vert \nabla p_{2}\left( t,x\right) +\nabla _{y}\hat{p}%
_{2m}\left( t,x,\frac{x}{\varepsilon }\right) \right\vert ^{2}\hspace{0.03cm}%
\mathrm{d}t\mathrm{d}x.  \label{e104}
\end{eqnarray}%
It is easy to show that the last two terms of the right hand side of (\ref%
{e104}) converge to
\begin{eqnarray}
&&-2\int_{Q_{m}^{\varepsilon }}\mathcal{K}_{m}\left( \frac{x}{\varepsilon }%
\right) \nabla p_{m}^{\varepsilon }\left( t,x\right) \left( \nabla
p_{m}\left( t,x\right) +\nabla _{y}\hat{p}_{m}\left( t,x,\frac{x}{%
\varepsilon }\right) \right) +  \notag \\
&&\int_{Q_{m}^{\varepsilon }}\mathcal{K}_{m}\left( \frac{x}{\varepsilon }%
\right) \left\vert \nabla p_{m}\left( t,x\right) +\nabla _{y}\hat{p}%
_{m}\left( t,x,\frac{x}{\varepsilon }\right) \right\vert ^{2}  \notag \\
&\longrightarrow &  \notag \\
&&-\int_{Q\times Y_{m}}\mathcal{K}_{m}\left( y\right) \left\vert \nabla
p_{m}\left( t,x\right) +\nabla _{y}\hat{p}_{m}\left( t,x,y\right)
\right\vert ^{2},\ m=1,2.  \label{e105}
\end{eqnarray}%
Now by (\ref{e37}) one can see that
\begin{gather}
\int_{Q_{m}^{\varepsilon }}\mathcal{K}_{1}\left( \frac{x}{\varepsilon }%
\right) \nabla p_{1}^{\varepsilon }\nabla p_{1}^{\varepsilon }\hspace{0.03cm}%
\mathrm{d}t\mathrm{d}x+\int_{Q_{m}^{\varepsilon }}\mathcal{K}_{2}\left(
\frac{x}{\varepsilon }\right) \nabla p_{2}^{\varepsilon }\nabla
p_{2}^{\varepsilon }\hspace{0.03cm}\mathrm{d}t\mathrm{d}x  \notag \\
-\int_{Q_{1}^{\varepsilon }}\partial _{t}(\phi _{1}p_{1}^{\varepsilon
}+\beta _{1}\mathrm{div}\mathbf{u}_{1}^{\varepsilon }+\gamma _{1}\theta
_{1}^{\varepsilon })p_{1}^{\varepsilon }\hspace{0.03cm}\mathrm{d}t\mathrm{d}x
\notag \\
-\int_{Q_{m}^{\varepsilon }}\partial _{t}(\phi _{2}p_{2}^{\varepsilon
}+\beta _{2}\mathrm{div}\mathbf{u}_{2}^{\varepsilon }+\gamma _{2}\theta
_{2}^{\varepsilon })p_{2}^{\varepsilon }\hspace{0.03cm}\mathrm{d}t\mathrm{d}x
\notag \\
+\int_{\Sigma _{T}^{\varepsilon }}\varepsilon \varsigma (p_{1}^{\varepsilon
}-p_{2}^{\varepsilon })^{2}\mathrm{d}s^{\varepsilon }(x)\mathrm{d}%
t=\int_{Q_{m}^{\varepsilon }}G_{m}p_{m}^{\varepsilon }\hspace{0.03cm}\mathrm{%
d}t\mathrm{d}x.  \label{e106}
\end{gather}%
Taking the limit in the r.h.s. of (\ref{e106}) gives
\begin{equation}
\lim_{\varepsilon }\int_{Q_{m}^{\varepsilon }}G_{m}p_{m}^{\varepsilon }%
\hspace{0.03cm}\mathrm{d}t\mathrm{d}x=\left\vert Y_{m}\right\vert
\int_{Q}G_{m}p_{m}\hspace{0.03cm}\mathrm{d}t\mathrm{d}x,\ m=1,2.
\label{e107}
\end{equation}%
Using Lemma \ref{l9}, and more precisely (\ref{e101}) and passing in the
last term of the l.h.s. of (\ref{e106}) yields%
\begin{equation*}
\lim_{\varepsilon }\int_{\Sigma _{T}^{\varepsilon }}\varepsilon \varsigma
(p_{1}^{\varepsilon }-p_{2}^{\varepsilon })^{2}\hspace{0.03cm}\mathrm{d}%
s^{\varepsilon }(x)\mathrm{d}t=\int_{Q\times \Sigma }\varsigma \left(
y\right) \left( p_{1}-p_{2}\right) ^{2}\hspace{0.03cm}\mathrm{d}x\mathrm{d}%
s\left( y\right) \mathrm{d}t.
\end{equation*}%
Regarding the second term of the right hand side of (\ref{e106}) we proceed
as follows: Firstly, integrating par parts with respect to the time variable
$t$ we have for $m=1,2$%
\begin{eqnarray}
\int_{Q_{m}^{\varepsilon }}\partial _{t}(\phi _{m}p_{m}^{\varepsilon
})p_{m}^{\varepsilon }\hspace{0.03cm}\mathrm{d}t\mathrm{d}x
&=&\int_{Q_{m}^{\varepsilon }}\phi _{m}(p_{m}^{\varepsilon })^{2}\left(
T,x\right) \hspace{0.03cm}\mathrm{d}x  \notag \\
&\rightarrow &\int_{Q\times Y_{m}}\phi _{m}(p_{m})^{2}\left( T,x\right)
\hspace{0.03cm}\mathrm{d}t\mathrm{d}x.  \label{e108}
\end{eqnarray}%
Secondly, since $\chi _{m}^{\varepsilon }\partial _{t}($\textrm{$d$}$\mathrm{%
iv}$\textbf{$u$}$_{m}^{\varepsilon })$ converges weakly to $\chi _{m}\left\{
\partial _{t}(\mathrm{div}\mathbf{u}_{m}+\mathrm{div}_{y}\mathbf{\hat{u}}%
_{m})\right\} $ and $\chi _{m}^{\varepsilon }p_{m}^{\varepsilon }$ converges
strongly to $\chi _{m}p_{m}$ it follows that
\begin{eqnarray}
&&\lim_{\varepsilon }\int_{Q_{m}^{\varepsilon }}\beta _{m}\partial _{t}(%
\mathrm{div}\mathbf{u}_{m}^{\varepsilon })p_{m}^{\varepsilon }\hspace{0.03cm}%
\mathrm{d}t\mathrm{d}x  \notag \\
&=&\int_{Q\times Y_{m}}\beta _{m}\left\{ \partial _{t}(\mathrm{div}\mathbf{u}%
_{m}+\mathrm{div}_{y}\mathbf{\hat{u}}_{m})\right\} p_{m}\hspace{0.03cm}%
\mathrm{d}t\mathrm{d}x.  \label{art7-89}
\end{eqnarray}%
Furthermore, as $\chi _{m}^{\varepsilon }\partial _{t}\theta
_{m}^{\varepsilon }$ converges strongly to $\chi _{m}\partial _{t}\theta
_{m} $, we see that
\begin{equation}
\lim_{\varepsilon }\int_{Q_{m}^{\varepsilon }}\gamma _{m}\partial _{t}\theta
_{m}^{\varepsilon }p_{m}^{\varepsilon }\hspace{0.03cm}\mathrm{d}t\mathrm{d}%
x=\int_{Q\times Y_{m}}\gamma _{m}\partial _{t}\theta _{m}p_{m}\hspace{0.03cm}%
\mathrm{d}t\mathrm{d}x.  \label{e109}
\end{equation}%
Using the integral identity (\ref{e89}) and (\ref{e107})-(\ref{e109}) we
pass to the limit in (\ref{e106}) to get
\begin{eqnarray}
&&\lim_{\varepsilon }\left\{ \int_{Q_{1}^{\varepsilon }}\mathcal{K}%
_{m}\left( \frac{x}{\varepsilon }\right) \nabla p_{1}^{\varepsilon }\nabla
p_{1}^{\varepsilon }\hspace{0.03cm}\mathrm{d}t\mathrm{d}x+\int_{Q_{2}^{%
\varepsilon }}\mathcal{K}_{2}\left( \frac{x}{\varepsilon }\right) \nabla
p_{2}^{\varepsilon }\nabla p_{2}^{\varepsilon }\hspace{0.03cm}\mathrm{d}t%
\mathrm{d}x\right\}  \notag \\
&=&\left\vert Y_{1}\right\vert \int_{Q}G_{1}p_{1}\hspace{0.03cm}\mathrm{d}t%
\mathrm{d}x+\left\vert Y_{2}\right\vert \int_{\Omega }G_{2}p_{2}\hspace{%
0.03cm}\mathrm{d}t\mathrm{d}x  \notag \\
&&+\int_{\Omega \times Y_{1}}\phi _{1}(p_{1})^{2}\left( T,x\right) \hspace{%
0.03cm}\mathrm{d}x+\int_{\Omega \times Y_{2}}\phi _{2}(p_{2})^{2}\left(
T,x\right) \hspace{0.03cm}\mathrm{d}x+  \notag \\
&&\int_{Q\times Y_{1}}\left[ \partial _{t}(\beta _{1}\left( \mathrm{div}%
\mathbf{u}_{1}+\mathrm{div}_{y}\mathbf{\hat{u}}_{1}\right) +\gamma
_{1}\theta _{1}))\right] p_{1}\hspace{0.03cm}\mathrm{d}t\mathrm{d}x  \notag
\\
&&+\int_{Q\times Y_{m}}\left[ \partial _{t}(\beta _{2}\left( \mathrm{div}%
\mathbf{u}_{2}+\mathrm{div}_{y}\mathbf{\hat{u}}_{2}\right) +\gamma
_{2}\theta _{2}))\right] p_{2}\hspace{0.03cm}\mathrm{d}t\mathrm{d}x  \notag
\\
&&-\int_{Q\times \Sigma }\varsigma \left( y\right) \left( p_{1}-p_{2}\right)
^{2}\hspace{0.03cm}\mathrm{d}t\mathrm{d}x\mathrm{d}s\left( y\right).
\label{e110}
\end{eqnarray}%
Next, using (\ref{e80}), the convergence in (\ref{e110}) can be reduced to
\begin{eqnarray}
&&\lim_{\varepsilon }\left\{ \int_{Q_{1}^{\varepsilon }}\mathcal{K}%
_{1}\left( \frac{x}{\varepsilon }\right) \nabla p_{1}^{\varepsilon }\nabla
p_{1}^{\varepsilon }\hspace{0.03cm}\mathrm{d}t\mathrm{d}x+\int_{Q_{2}^{%
\varepsilon }}\mathcal{K}_{2}\left( \frac{x}{\varepsilon }\right) \nabla
p_{2}^{\varepsilon }\nabla p_{2}^{\varepsilon }\hspace{0.03cm}\mathrm{d}t%
\mathrm{d}x\right\} +  \notag \\
&=&\int_{Q\times Y_{1}}\mathcal{K}_{1}\left( y\right) (\nabla p_{1}+\nabla
_{y}\hat{p}_{1})(\nabla p_{1}+\nabla _{y}\hat{p}_{1}))\hspace{0.03cm}\mathrm{%
d}t\mathrm{d}x\mathrm{d}y+  \notag \\
&&\int_{Q\times Y_{2}}\mathcal{K}_{2}\left( y\right) (\nabla p_{2}+\nabla
_{y}\hat{p}_{2})(\nabla p_{2}+\nabla _{y}\hat{p}_{2}))\hspace{0.03cm}\mathrm{%
d}t\mathrm{d}x\mathrm{d}y.  \label{e111}
\end{eqnarray}%
Finally, collecting all the limits (\ref{e105}) and (\ref{e111}),
Equation (\ref{e104}) becomes
\begin{eqnarray*}
&&\lim_{\varepsilon }\sum_{m=1,2}\left\Vert \chi _{m}\left( \frac{x}{%
\varepsilon }\right) \left\{ \nabla p_{m}^{\varepsilon }\left( t,x\right)
-\nabla p_{m}\left( t,x\right) -\nabla _{y}\hat{p}_{m}\left( t,x,\frac{x}{%
\varepsilon }\right) \right\} \right\Vert _{0,Q}^{2}\leq \\
&&\lim_{\varepsilon }\sum_{m=1,2}\int_{Q_{m}^{\varepsilon }}\mathcal{K}%
_{m}\left\vert \nabla p_{m}^{\varepsilon }\left( t,x\right) -\nabla
p_{m}\left( t,x\right) -\nabla _{y}\hat{p}_{m}\left( t,x,\frac{x}{%
\varepsilon }\right) \right\vert ^{2}=0.
\end{eqnarray*}%
This gives the desired result.
\end{proof}

Now we state the corrector result for the heat equation whose proof follows
the same lines as that of Proposition \ref{p1}\ and is therefore omitted.

\begin{proposition} We have
\begin{equation*}
\lim_{\varepsilon \rightarrow 0}\left\Vert \chi _{m}\left( \frac{x}{%
\varepsilon }\right) \left\{ \nabla \theta _{m}^{\varepsilon }\left(
t,x\right) -\nabla \theta _{m}\left( t,x\right) -\nabla _{y}\theta
_{m}\left( t,x,\frac{x}{\varepsilon }\right) \right\} \right\Vert _{0,Q}=0.
\end{equation*}
\end{proposition}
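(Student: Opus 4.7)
The argument parallels that of the previous proposition for $\nabla p_m^{\varepsilon}$, with the roles of $\mathcal{K}_m$, $\varsigma$, $p_m$, $g_m^\ast$ played by $\mathcal{L}_m$, $\omega$, $\theta_m$, $h_m^\ast$, and exploiting the coupling through the terms $\gamma_m\mathrm{div}\mathbf{u}^\varepsilon$ and $\alpha_m p_m^\varepsilon$ appearing in the heat equation. The plan is to expand the squared $L^2(Q)$-norm against the conductivity $\mathcal{L}_m$, identify the various limits, and use the macroscopic heat equation of Lemma \ref{l8} together with the integral identity (\ref{e90}) to cancel all boundary contributions.

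First I would write, summing over $m=1,2$,
\begin{equation*}
\begin{array}{l}
\displaystyle\sum_{m}\int_{Q_m^{\varepsilon}}\mathcal{L}_m\bigl(\tfrac{x}{\varepsilon}\bigr)\bigl|\nabla\theta_m^{\varepsilon}-\nabla\theta_m-\nabla_y\hat\theta_m\bigr|^2\,\mathrm{d}t\mathrm{d}x \\[1mm]
\displaystyle=\sum_m\int_{Q_m^{\varepsilon}}\mathcal{L}_m\nabla\theta_m^{\varepsilon}\nabla\theta_m^{\varepsilon}\,\mathrm{d}t\mathrm{d}x-2\sum_m\int_{Q_m^{\varepsilon}}\mathcal{L}_m\nabla\theta_m^{\varepsilon}\bigl(\nabla\theta_m+\nabla_y\hat\theta_m\bigr)\,\mathrm{d}t\mathrm{d}x \\[1mm]
\displaystyle\quad+\sum_m\int_{Q_m^{\varepsilon}}\mathcal{L}_m\bigl|\nabla\theta_m+\nabla_y\hat\theta_m\bigr|^2\,\mathrm{d}t\mathrm{d}x.
\end{array}
\end{equation*}
Since $\nabla\theta_m+\nabla_y\hat\theta_m$ is an admissible test function (using the regularity of the cell solutions $\vartheta_m^i$ from (\ref{e43})), Lemma \ref{l1} together with the two-scale limit (\ref{e57}) shows that the last two terms on the right converge to
$-\sum_m\int_{Q\times Y_m}\mathcal{L}_m(y)|\nabla\theta_m+\nabla_y\hat\theta_m|^2\,\mathrm{d}t\mathrm{d}x\mathrm{d}y.$

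Next, to pass to the limit in $\sum_m\int_{Q_m^{\varepsilon}}\mathcal{L}_m\nabla\theta_m^{\varepsilon}\nabla\theta_m^{\varepsilon}$, I would take $q_m=\theta_m^{\varepsilon}$ in (\ref{e38}) to obtain
\begin{equation*}
\begin{array}{l}
\displaystyle\sum_m\int_{Q_m^{\varepsilon}}\mathcal{L}_m\nabla\theta_m^{\varepsilon}\nabla\theta_m^{\varepsilon}\,\mathrm{d}t\mathrm{d}x+\int_{\Sigma_T^{\varepsilon}}\varepsilon\omega(\theta_1^{\varepsilon}-\theta_2^{\varepsilon})^2\,\mathrm{d}s^{\varepsilon}(x)\mathrm{d}t \\[1mm]
\displaystyle=\sum_m\int_{Q_m^{\varepsilon}}h_m\theta_m^{\varepsilon}\,\mathrm{d}t\mathrm{d}x-\sum_m\int_{Q_m^{\varepsilon}}\partial_t\bigl(c_m\theta_m^{\varepsilon}+\gamma_m\mathrm{div}\mathbf{u}_m^{\varepsilon}+\alpha_m p_m^{\varepsilon}\bigr)\theta_m^{\varepsilon}\,\mathrm{d}t\mathrm{d}x.
\end{array}
\end{equation*}
For the surface term I invoke the analogue of Lemma \ref{l9} for $\omega$, yielding the limit $\int_{Q\times\Sigma}\omega(y)(\theta_1-\theta_2)^2\,\mathrm{d}t\mathrm{d}x\mathrm{d}s(y)$. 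For the time-derivative terms, I integrate by parts in $t$ exactly as in (\ref{e108})--(\ref{e109}): the diagonal piece $\int\partial_t(c_m\theta_m^{\varepsilon})\theta_m^{\varepsilon}=\tfrac{1}{2}\int c_m(\theta_m^{\varepsilon})^2(T,\cdot)$ passes through by strong $L^2$-convergence of $\chi_m^{\varepsilon}\theta_m^{\varepsilon}$ (Rellich), while the cross terms pass by weak-strong products since $\chi_m^{\varepsilon}\partial_t p_m^{\varepsilon}\rightharpoonup\chi_m\partial_t p_m$ and $\chi_m^{\varepsilon}\partial_t\mathrm{div}\mathbf{u}^{\varepsilon}\rightharpoonup\chi_m\partial_t(\mathrm{div}\mathbf{u}+\mathrm{div}_y\hat{\mathbf{u}})$ against the strongly converging $\chi_m^{\varepsilon}\theta_m^{\varepsilon}$. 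The right-hand side $\int h_m\theta_m^{\varepsilon}$ converges to $|Y_m|\int_Q h_m\theta_m$ by Remark \ref{r1}.

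The key step that requires care is recognizing that the resulting limit of $\sum_m\int_{Q_m^{\varepsilon}}\mathcal{L}_m\nabla\theta_m^{\varepsilon}\nabla\theta_m^{\varepsilon}$, once rewritten using the integral identity (\ref{e90}), collapses exactly to $\sum_m\int_{Q\times Y_m}\mathcal{L}_m(y)|\nabla\theta_m+\nabla_y\hat\theta_m|^2\,\mathrm{d}t\mathrm{d}x\mathrm{d}y$; this is the one place where the coupling terms $\gamma_m\mathcal{C}_m\!:\!e(\mathbf{u})$ and $\alpha_m^\ast p_m$ in the macroscopic heat equation (Lemma \ref{l8}) must cancel the corresponding contributions coming from the time integration by parts applied to $\gamma_m\mathrm{div}\mathbf{u}^{\varepsilon}$ and $\alpha_m p_m^{\varepsilon}$. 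Substituting back into the expansion, the three limits cancel and I obtain
\begin{equation*}
\lim_{\varepsilon\to 0}\sum_m\int_{Q_m^{\varepsilon}}\mathcal{L}_m\bigl|\nabla\theta_m^{\varepsilon}-\nabla\theta_m-\nabla_y\hat\theta_m\bigr|^2\,\mathrm{d}t\mathrm{d}x=0,
\end{equation*}
and coercivity of $\mathcal{L}_m$ finishes the proof. The main obstacle is ensuring that all cross-coupling time-derivative terms assemble correctly so that the macroscopic heat equation (Lemma \ref{l8}) tested against $\theta_m$ exactly produces the identity (\ref{e90}), leaving no residual term.
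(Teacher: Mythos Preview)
Your proposal is correct and follows precisely the approach the paper intends: the paper omits this proof entirely, stating only that it ``follows the same lines'' as the preceding corrector result, and your argument is exactly the temperature analogue of the pressure-corrector proof (expand the weighted $L^2$-norm, pass to the limit in the cross and quadratic terms via two-scale convergence and admissibility of $\nabla\theta_m+\nabla_y\hat\theta_m$, test (\ref{e38}) with $q_m=\theta_m^\varepsilon$, handle the surface term via the $\omega$-analogue of Lemma~\ref{l9}, and close using the identity (\ref{e90}) together with Lemma~\ref{l8}). Nothing further is needed.
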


Finally we state the asymptotic behavior of the energies.

\begin{proposition}
One has for a.e. $t\in \left( 0,T\right) $
\begin{eqnarray}
&&\lim_{\varepsilon }\left( \int_{\Omega }\mathbf{A}\left( \frac{x}{%
\varepsilon }\right) e(\mathbf{u}^{\varepsilon })e(\mathbf{u}^{\varepsilon })%
\hspace{0.03cm}\mathrm{d}x+\sum_{m=1,2}\int_{\Omega _{m}^{\varepsilon
}}\left( \beta _{m}\nabla p_{m}^{\varepsilon }+\gamma _{m}\nabla \theta
_{m}^{\varepsilon }\right) \mathbf{u}^{\varepsilon }\hspace{0.03cm}\mathrm{d}%
x\right)  \notag \\
&=&\int_{\Omega }a^{\hom }e(\mathbf{u})e(\mathbf{u})\hspace{0.03cm}\mathrm{d}%
x+\sum_{m=1,2}\int_{\Omega }\left( \mathcal{A}_{m}\nabla \theta _{m}+%
\mathcal{B}_{m}\nabla p_{m}\right) \mathbf{u}\hspace{0.03cm}\mathrm{d}x.
\label{e112}
\end{eqnarray}
\end{proposition}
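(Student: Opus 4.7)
The plan is to reduce both sides of (\ref{e112}) to the single quantity $\int_{\Omega}\mathbf{f}^{\ast}\mathbf{u}\,\mathrm{d}x$ at a.e. $t\in(0,T)$ by testing both the microscopic elasticity equation (\ref{e36}) and its homogenized counterpart (\ref{e79}) against their own solutions $\mathbf{u}^{\varepsilon}$ and $\mathbf{u}$.

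First, I would insert $\mathbf{v}=\mathbf{u}^{\varepsilon}(t,\cdot)\in\mathbf{V}$ into (\ref{e36}) at a.e. $t\in(0,T)$. The left-hand side of this substituted identity is exactly the expression whose limit is taken on the left-hand side of (\ref{e112}), whereas the right-hand side becomes $\int_{\Omega}\mathbf{f}^{\varepsilon}\mathbf{u}^{\varepsilon}\,\mathrm{d}x$. Thus the asymptotic behavior of the microscopic energy is entirely governed by this linear functional.

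Next, I would pass to the limit in $\int_{\Omega}\mathbf{f}^{\varepsilon}\mathbf{u}^{\varepsilon}\,\mathrm{d}x$ by weak-strong duality. The uniform bound $\|\mathbf{u}^{\varepsilon}\|_{L_{T}^{\infty}(\mathbf{V})}\leq C$ from (\ref{e40}), together with the compact embedding $H_{0}^{1}(\Omega)\hookrightarrow L^{2}(\Omega)$ (Rellich), gives strong convergence $\mathbf{u}^{\varepsilon}(t,\cdot)\rightarrow\mathbf{u}(t,\cdot)$ in $L^{2}(\Omega)^{3}$ at a.e. $t$. On the other hand, the standard periodicity result $\chi_{m}(x/\varepsilon)\rightharpoonup|Y_{m}|$ weakly-$\ast$ in $L^{\infty}(\Omega)$ yields $\mathbf{f}^{\varepsilon}\rightharpoonup\mathbf{f}^{\ast}=|Y_{1}|\mathbf{f}_{1}+|Y_{2}|\mathbf{f}_{2}$ weakly in $L^{2}(\Omega)^{3}$, so that
\begin{equation*}
\lim_{\varepsilon\rightarrow 0}\int_{\Omega}\mathbf{f}^{\varepsilon}\mathbf{u}^{\varepsilon}\,\mathrm{d}x=\int_{\Omega}\mathbf{f}^{\ast}\mathbf{u}\,\mathrm{d}x.
\end{equation*}

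Then I would test the homogenized weak formulation (\ref{e79}) with $\mathbf{v}=\mathbf{u}(t,\cdot)\in\mathbf{V}$. Using $\sigma^{0}(\mathbf{u})=\mathcal{A}e(\mathbf{u})$ (which upon identification is $a^{\hom}e(\mathbf{u})$) and the identifications $\mathcal{B}_{m}=B_{m}$, $\mathcal{A}_{m}=D_{m}$, this produces precisely the right-hand side of (\ref{e112}), equated to $\int_{\Omega}\mathbf{f}\mathbf{u}\,\mathrm{d}x=\int_{\Omega}\mathbf{f}^{\ast}\mathbf{u}\,\mathrm{d}x$. Matching the two expressions for $\int_{\Omega}\mathbf{f}^{\ast}\mathbf{u}\,\mathrm{d}x$ yields (\ref{e112}).

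The main technical point to watch is the strong $L^{2}$-convergence of $\mathbf{u}^{\varepsilon}(t,\cdot)$ at a.e. $t$: although Rellich gives strong convergence at each fixed $t$ along a subsequence, the uniqueness of the weak $H_{0}^{1}$-limit $\mathbf{u}(t,\cdot)$ forces the whole sequence to converge strongly, so no time-dependent diagonalization is needed. Apart from this, the argument is a straightforward instance of the weak-strong closure principle in periodic homogenization, and no two-scale machinery is required here beyond what was already set up for Lemmata \ref{l6}--\ref{l8}.
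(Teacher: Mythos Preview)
Your argument is essentially the same as the paper's: test (\ref{e36}) with $\mathbf{v}=\mathbf{u}^{\varepsilon}$, pass to the limit in the force term $\int_{\Omega}\mathbf{f}^{\varepsilon}\mathbf{u}^{\varepsilon}\,\mathrm{d}x$, and then identify the result with the right-hand side of (\ref{e112}) via the homogenized formulation (\ref{e79}) tested against $\mathbf{u}$. Your version is in fact slightly more careful than the paper's in spelling out the Rellich\,/\,weak--strong step and the a.e.-in-$t$ justification.
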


\begin{proof}
Taking $\mathbf{v}=$\textbf{$u$}$^{\varepsilon }$ in (\ref{e36}) gives
\begin{eqnarray*}
&&\int_{\Omega }a\left( \frac{x}{\varepsilon }\right) e(\mathbf{u}%
^{\varepsilon })\left( x\right) e(\mathbf{u}^{\varepsilon })\hspace{0.03cm}%
\mathrm{d}x+\sum_{m=1,2}\int_{\Omega _{m}^{\varepsilon }}\left( \beta
_{m}\nabla p_{m}^{\varepsilon }+\beta _{m}\nabla \theta _{m}^{\varepsilon
}\right) \mathbf{u}^{\varepsilon }\hspace{0.03cm}\mathrm{d}x \\
&&-\int_{\Omega }\mathbf{Fu}^{\varepsilon }=0
\end{eqnarray*}%
which, by taking into account (\ref{e79}), tends to
\begin{eqnarray*}
&&\int_{\Omega }a^{\hom }e(\mathbf{u})e\left( \mathbf{u}\right) \hspace{%
0.03cm}\mathrm{d}x+\sum_{m=1,2}\int_{\Omega }\left( \mathcal{A}_{m}\nabla
\theta _{m}+\mathcal{B}_{m}\nabla p_{m}\right) \mathbf{u}\hspace{0.03cm}%
\mathrm{d}x \\
&&-\int_{\Omega }\mathbf{Fu}\hspace{0.03cm}\mathrm{d}x\text{=}0.
\end{eqnarray*}%
Hence (\ref{e112}).
\end{proof}

\section{Conclusion}

In this paper we derived by a homogenization technique a more general model
of thermoporoelasticity with double porosity and two temperatures. More
precisely, we studied a micro-model of fluid and thermal flows in two
component poroelastic media consisting of matrix and inclusions with the
same order of permeabilities and conductivities, separated by a periodic
and thin layer which forms an exchange fluid/thermal barrier. In particular,
we have shown that the Biot-Willis and thermal expansion parameters are in
that case matrices and no longer scalars, see for instance \cite{ain1, ain2}%
. Let us mention that the result of the paper remains valid if one considers
non homogeneous initial and/or Dirichlet \ conditions. An interesting
problem is to investigate the limiting behavior of such media when the flow
potential in the inclusions is rescaled by $\varepsilon ^{2}$. This occurs
especially when the flow in the inclusions presents very high frequency
spatial variations due to a relatively very low permeability, see Remark \ref%
{rem1}.

\noindent \textbf{Acknowledgment}
The author acknowledges the support of the Algerian ministry of higher
education and scientific research through the AMNEDP Laboratory.
\\

\end{document}